   \newtheorem{thm}{Theorem}[subsection]
      \newtheorem*{thm*}{Theorem}
   \newtheorem{prop}[thm] {Proposition}     
   \newtheorem{lemma} [thm]{Lemma}
   \newtheorem*{claim}{Claim}
   \newtheorem{cor} [thm]{Corollary}
   \newtheorem*{conjecture*}{Conjecture}
\theoremstyle{definition}
          \newtheorem*{exercise*}{Exercise}
     \newtheorem{example}[subsubsection]{Example}
   \newtheorem{defi}[thm] {Definition}
     \newtheorem{remark} [thm]{Remark}
\newcommand{\RR}{{\mathbb{R}}}
\newcommand{\C}{{\mathbb{C}}}
\newcommand{\QQ}{{\mathbb{Q}}}
\newcommand{\Z}{{\mathbb{Z}}}
 \newcommand{\cA}{{\mathcal A}}
\newcommand{\cC}{{\mathcal C}}
\newcommand{\cJ}{{\mathcal J}}
\renewcommand{\cL}{{\mathcal L}}
\newcommand{\cM}{{\mathcal M}}
\newcommand{\cO}{{\mathcal O}}
\newcommand{\cP}{{\mathcal P}}
\newcommand{\cS}{{\mathcal S}}
 \newcommand{\md}{{\underline{d}}}
\def\<{\langle}
\def\>{\rangle}
\newcommand{\Spec}{\operatorname{Spec}}
\newcommand{\Sing}{{\operatorname{Sing}}}
\newcommand{\Jac}{{\operatorname{Jac }}}
\newcommand{\Pic}{{\operatorname{{Pic}}}}
\newcommand{\Div}{{\operatorname{Div}}}
\newcommand{\dv}{{\operatorname{div}}}
\newcommand{\Prin}{{\operatorname{Prin}}}
\newcommand{\Ker}{{\operatorname{Ker}}}
\newcommand{\Aut}{{\operatorname{Aut}}}
\newcommand{\ocM}{\overline{{\mathcal M}}}
\newcommand{\red}{{\operatorname{red}}}
\newcommand{\trop}{{\operatorname{trop}}}
\newcommand{\an}{{\operatorname{an}}}
\newcommand{\val}{{\operatorname{val}}}
\newcommand{\double}{\genfrac..{0pt}1
{\raise -2pt\hbox{$\scriptstyle\longrightarrow$}}{\raise 4pt\hbox
{$\scriptstyle\longrightarrow$}}}
 \newcommand{\la}{\longrightarrow}
\newcommand{\ha}{\hookrightarrow}
\newcommand{\ov}{\overline}
\newcommand{\Alb}{\operatorname{Alb}}
 \newcommand{\Mgbst}{\ov{\mathcal{M}_g}}
  \newcommand{\Mgban}{\Mgb^{\operatorname{an}}}
\def\Mgb{\overline{M}_g}
\newcommand{\PX}{\overline{P_{g-1}(X)}}
\newcommand{\SG}{{{\rm S}_{g}}}
\newcommand{\Mt}{M^{\rm{{trop}}}}
\newcommand{\Mgt}{{M_{g}^{\rm trop}}}
\newcommand{\Agt}{A_g^{\rm trop}}
\newcommand{\Mgtb}{\ov{\Mgt}}
\newcommand{\ST}{\mathcal{ST}}
\newcommand{\SP}{\mathcal{SP}}
\begin{document}

\title{ Tropical   methods 
 in the moduli theory of algebraic  curves}

\author[Caporaso]{Lucia Caporaso}

 \address[Caporaso]{Dipartimento di Matematica e Fisica\\
 Universit\`{a} Roma Tre\\Largo San Leonardo Murialdo \\I-00146 Roma\\  Italy }\email{caporaso@mat.uniroma3.it}

\date{\today}




\maketitle

\tableofcontents
  
 \section{Introduction and Notation}

 In recent years a series of remarkable advances in tropical geometry and in non-Archimedean  geometry have brought new insights to the moduli theory of algebraic curves and their Jacobians.  
  The goal of this expository paper is to present some of the results in this area.

There are   some important aspects about the interplay between the theories of tropical and algebraic curves   which we do not discuss here,  to keep the paper to a reasonable length. In particular, we leave out Brill-Noether theory, which has  already seen a fruitful interchange between the two areas over the last years  and is currently progressing at a fast pace.

  We begin the paper by defining, in Section~\ref{TropicalSc}, tropical curves as weighted metric graphs, we then construct their moduli space together with its compactification, the moduli space of extended tropical curves.
  
In Section~\ref{FromSc}, after introducing Deligne-Mumford stable curves   and their moduli space, we   explain the connection between tropical curves and families of stable curves parametrized by a local scheme. We then show how to globalize this connection by introducing the Berkovich analytification of the moduli space of stable curves and showing that it has a canonical retraction onto the moduli space of extended tropical curves, which is in turn identified with the skeleton of the moduli stack   of stable curves.

In Section~\ref{CurvesSc},  highlighting   combinatorial aspects, we describe Jacobians of nodal curves and    their models over discrete valuation rings, focusing on   the N\'eron model and the Picard scheme.
We then turn to compactified Jacobians, introduce a compactification of the moduli space of principally polarized abelian varieties 
and  the compactified Torelli map.

We devote Section~\ref{TorelliSc} to the Torelli theorem for graphs, tropical curves, and stable curves. Although the proofs of these   theorems are logically independent, they  use  many of the same combinatorial ideas, which we tried to highlight in our exposition.

The order  we chose to present the various topics does not reflect the actual chronology. 
As we said, we begin with tropical curves and their moduli space,
then discuss the connection with algebraic stable curves also using    analytic spaces;
we treat the Torelli problem   at the end. History goes almost in the opposite direction:
moduli spaces for tropical curves and abelian varieties were  rigorously constructed after 
  the first Torelli theorem was proved, and in fact the need for such moduli spaces
was explicitly brought up by the   Torelli theorem; see the appendix in \cite{CV1}.
The connection with   algebraic curves via non-Archimedian geometry was established a few years later.
Although our presentation is not consistent with    history, it is, in our opinion,
a   natural path through the theory,
starting from the most basic objects (the curves) and ending with the most  complex
(the Jacobians).

 In writing this paper we tried to address the non-expert reader, hence we included various well known statements in the attempt
 of making the material  more accessible; of course, we claim no originality for that.

   \subsection{Notation}
  Unless we  specify otherwise, we shall use the following notations and conventions.

  We work over an algebraically closed field $k$.
  
We denote by $K$   a field containing $k$  and assumed   to be complete with respect to a non-Archimedean valuation, written
$$
v_K:K \to \RR\cup \{\infty\}.
$$ 
Such a $K$ is also called, as in \cite{berkovich},   a {\it non-Archimedean field}.
We assume  $v_K$   induces   on $k$ the trivial valuation, $k^*\to 0$.
We denote by    $R$ the valuation ring of $K$.
  
In this paper any field extension  $K'|K$  is an extension of non-Archimedean fields so that  $K'$ will be endowed with a valuation
 inducing $v_K$.

 Curves are always assumed to be  reduced, projective and having at most nodes as singularities.
   
     $g\geq 2$ is an integer.
  
  $G=(V,E)$ a finite graph (loops and multiple edges allowed), $V$ and $E$ the sets of vertices and edges.
  We also write $V=V(G)$ and $E=E(G)$.
  Our graphs will be connected.
    
    ${\bf{G}}=(G,w)$ denotes a weighted graph.
  
  $\Gamma=(G,w,\ell)$ denotes a   tropical curve, possibly extended.

We use the    ``Kronecker" symbol: for two   objects $x$ and $y$
 $$
 \kappa_{x,y}:=\begin{cases}
1 & \text{ if } x=y, \\
0 & \text{ otherwise.}
\end{cases}
 $$

In drawing graphs   we shall denote by a ``$\circ$" the vertices of weight $0$ and by a ``$\bullet$" the vertices of positive weight.

\

\noindent
{\it Acknowledgements.} The paper benefitted from comments and suggestions   from  Sam Payne,   Filippo Viviani, and    the   referees, to whom  I am   grateful.

 \section{Tropical curves}
 \label{TropicalSc}
 \subsection{Abstract tropical curves}
 We begin by defining abstract tropical curves, also known as ``metric graphs", following, with a slightly different terminology,  \cite{MIKnotices}, \cite{MZ}, and \cite{BMV}.

\begin{defi}
 A {\it  pure  tropical curve} is a pair $\Gamma = (G, \ell)$ such that $G=(V,E)$ is a  graph
and $\ell:E\to \RR_{>0}$ is a {\it length} function on the edges.
 
More generally, a  {\it  (weighted) tropical curve} is a triple $\Gamma = (G,    \ell, w)$ such that $G$ and $\ell$ are as above,
and $w:V\to \Z_{\geq 0}$ is a {\it weight} function on the vertices.

     \end{defi}
     
Pure tropical curves shall be viewed as   tropical curves whose weight function is identically $0$. 

The {\it genus} of the tropical curve $\Gamma = (G,    \ell, w)$ is defined as the genus of the underlying weighted graph $(G,w)$, that is:
 
\begin{equation}
\label{genus}
  g(\Gamma):=g(G,w):=b_1(G)+\sum_{v\in V}w(v),
  \end{equation} 
where $b_1(G)=rk_{\Z} H_1(G,\Z)$ is, as usual, the first betti number of $G$.

Pure tropical curves arise naturally in tropical geometry, and they were the first ones to be studied for   sometime;
the need to generalize the definition by adding weights on the vertices appeared  when studying families.
These families are quite easy to generate by varying the lengths of the edges; 
 they have a    basic invariant in    the genus   defined above, which, for pure tropical curves, equals the first betti number.

Now, the problem   one encounters when dealing with such families is that if some edge-length tends to zero the genus of the limiting graph may decrease, as in the following example.
 
\begin{example}
 \label{puredeg}
 Let us pick a pure tropical curve of genus $2$, drawn as the first graph from the left in the picture below.
 Now consider a degeneration in  three steps  by letting the edge lengths, $l_1, l_2, l_3$,  go to zero one at the time while the other lengths remain fixed;
the picture represents the three steps. It is clear that after the second and third step the genus decreases each time,
and the last graph has genus $0$.
 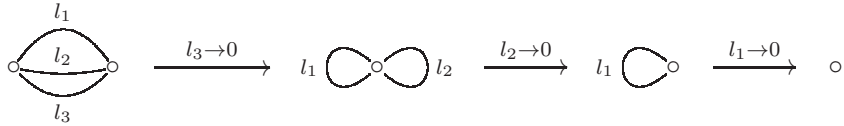
\begin{figure}[h]
\begin{equation*}
\xymatrix@=.5pc{
&&&&&&&&&&&&&&\\
*{\circ}\ar@{-} @/^ 1.2pc/[rrr]^{l_1}\ar@{-} @/_.2pc/[rrr]^{l_2}\ar@{-} @/_.9pc/[rrr]_{l_3}  &&&*{\circ}  
  &\ar@{->}[rrrr] ^{l_3 \to 0} &&&&&&&*{\circ}\ar@{-}@(ur,dr)^{l_2}\ar@{-}@(ul,dl)_{l_1} 
  &&&\ar@{->}[rrr]^{l_2 \to0} &&&&&&*{\circ}\ar@{-}@(ul,dl)_{l_1} 
 &\ar@{->}[rrr]^{l_1 \to 0} &&&&*{\circ}\\
 &&&&&&&&&}
\end{equation*}
\caption{Specialization of pure tropical curves}
\end{figure}
\end{example}
 
A remedy to this ``genus-dropping" problem, introduced in \cite{BMV},
  is to add  a piece of structure, consisting of a weight function on the vertices, 
and define {\it weighted  (edge) contractions} in such a way that as a 
loop-length goes to zero  the weight of its   base vertex
increases by $1$. Moreover,
when the length of a non-loop edge, $e$, tends to zero, $e$  gets contracted to a vertex whose weight is defined as the sum of the weights of the ends of $e$. 

For example, let us modify the above picture by assuming   the initial curve has both vertices of weight $1$, so its genus is $4$. As $l_3$ tends to zero   the limit curve has a vertex of  weight $1+1=2$. In the remaining two steps we have a loop getting contracted,
so the last curve is an isolated vertex of weight $4$.   

\begin{figure}[h]
\begin{equation*}
\xymatrix@=.5pc{
&&&&&&&&&&&&&&\\
*{\bullet}\ar@{-} @/^ 1.2pc/[rrr]^{l_1}
\ar@{-} @/_.2pc/[rrr]^{l_2}\ar@{-} @/_.9pc/[rrr]_{l_3}_>{\scriptstyle {1}}_<{\scriptstyle {1}}  &&&*{\bullet}  
  &\ar@{->}[rrrr] ^{l_3 \to 0} &&&&&&&*{\bullet}\ar@{-}@(ur,dr)^{l_2}\ar@{-}@(ul,dl)_{l_1}_(.95){\scriptstyle {2}} 
  &&&\ar@{->}[rrr]^{l_2 \to0} &&&&&&*{\bullet} \ar@{-}@(ul,dl)_{l_1}_(1){\scriptstyle {3}}
 &\ar@{->}[rrr]^{l_1 \to 0}&&&&*{\bullet  _{\scriptstyle {\ 4}}} &  \\
 &&&&&&&&&&&&&&&&&&&&&&&&&&}
\end{equation*}
\caption{Specialization of (weighted) tropical curves}
\end{figure}
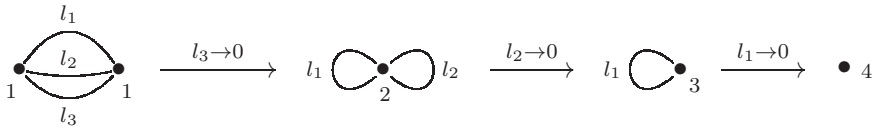

It is quite clear that weighted (edge) contractions can be defined  combinatorially, disregarding the length function.
Hence 
we shall denote by 
\begin{equation}
 \label{wct}
(G,w) \to (G',w')
\end{equation}
a weighted contraction  as defined above.  Notice that  there is a natural inclusion $E(G')\subset E(G)$, and a surjection $V(G)\to V(G')$.
As we said,  weighted contractions leave  the genus invariant. 
 
 \
 
One thinks of a vertex $v$ of positive weight $w(v)$ as having $w(v)$  invisible loops of zero length based at it;  this
 somewhat naive interpretation turns out to  work well in developing a Riemann-Roch divisor theory for (weighted) tropical curves, as carried out in \cite{AC}.

From the point of view of algebraic geometry, the notion of weighted tropical curve is quite natural for at least two reasons.
First of all, it resembles the notion of non-reduced schemes (some subsets  of which have a   ``multiple" structure); 
moreover, just as in the tropical set-up where weights are introduced to better understand families and their limits, non reduced-schemes in algebraic geometry are indispensable to study families of algebraic varieties.
Secondly, weighted graphs (with no length function) have been used since the time of \cite{DM} to describe the moduli space of stable algebraic curves, as we shall explain in Section~\ref{FromSc}. 

According to these analogies, a vertex $v$ of weight $w(v)$ corresponds to a component of geometric genus $w(v)$  (the geometric genus of an irreducible curve is the genus of its desingularization). For example, in the last step of Figure 2, the last graph corresponds to a smooth curve of genus $4$, and  the previous graph corresponds to a   curve of geometric genus $3$ with one node.
In this    analogy, the arrow in the picture has to be reversed  and interpreted as a family of smooth curves of genus $4$ specializing to a curve (of geometric genus $3$) with one node.

  \subsection{Equivalence of tropical curves}
  
Two tropical curves, $\Gamma = (G,    \ell, w)$ and $\Gamma ' = (G',    \ell', w')$ are {\it isomorphic} 
if there is an isomorphism between $G$ and $G'$ which preserves both the weights of the vertices
and the lengths of the edges.

Isomorphism is thus  a natural notion; on the other hand
 tropical curves are defined to be  {\it equivalent} by a more general relation.

\begin{defi}
 \label{tropeq}

Two tropical curves,  $\Gamma$ and $\Gamma '$,  of genus at least $2$ are {\it equivalent} if   one obtains   isomorphic tropical curves, $\ov{\Gamma}$ and $\ov{\Gamma'}$,
 after performing   the following two operations   until $\ov{\Gamma}$ and $\ov{\Gamma'}$ have  no
  weight-zero vertex of valency
 less than $3$.
 
- Remove all  weight-zero vertices of valency
 $1$  and their adjacent edge.
 
- Remove every weight-zero vertex $v$ of valency $2$   and replace it by a point,
so that the two edges adjacent to $v$ are transformed into one edge.
 \end{defi}
 
It is clear that the underlying graph, $(\overline{G}, \overline{w})$, of  $\ov{\Gamma}$ has the property that every vertex of weight zero has valency at least $3$ (the same holds for  $\ov{\Gamma'}$, obviously).
Weighted graphs of genus at least $2$ with this property are called  {\it stable}; they are old friends of algebraic geometers as we shall see in Section~\ref{FromSc}. One  usually refers to $\ov{\Gamma}$ as the {\it stabilization} of $\Gamma$.

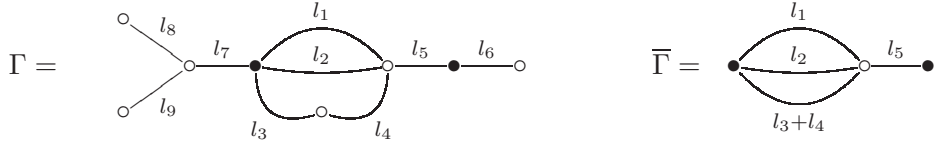
\begin{figure}[h]
\begin{equation*}
\xymatrix@=.5pc{
 &&*{\circ}&&&&&&&&&&&\\
  \Gamma =&&&&*{\circ}\ar@{-}[rr]^{l_7}\ar@{-}[dll]^{l_9}\ar@{-}[ull]_{l_8}&&*{\bullet}\ar@{-} @/^ 1.2pc/[rrrr]^{l_1}
\ar@{-} @/_.2pc/[rrrr]^{l_2}\ar@{-} @/_.9pc/[rrd]_{l_3} &&&&*{\circ}\ar@{-} @/^.9pc/[lld]^{l_4} \ar@{-}[rr]^{l_5}&&*{\bullet}\ar@{-}[rr]^{l_6}&&*{\circ}
& &&&  \overline{\Gamma} =&
*{\bullet}\ar@{-} @/^ 1.2pc/[rrrr]^{l_1}
\ar@{-} @/_.2pc/[rrrr]^{l_2}\ar@{-} @/_1.2pc/[rrrr]_{l_3+l_4}   &&&&*{\circ} \ar@{-}[rr]^{l_5}&&*{\bullet} &&  \\
&&*{\circ}&&&&&&*{\circ}&&&&&&&&&&&&&}
\end{equation*}
\caption{A tropical curve $\Gamma$ and its stabilization, $\ov{\Gamma}$.}
\end{figure}

Observe that while for any fixed genus there exist infinitely many (non-isomorphic) graphs,   only finitely many of them are stable.

The following instructive simple  lemma will also  be useful later.

\begin{lemma}
\label{3g-3}
 Let $(G,w)$ be a stable graph of genus $g\geq 2$.
 Then $G$ has at most $3g-3$ edges, and the following are equivalent.
 
\begin{enumerate}
\item
\label{G1}
 $|E(G)|=3g-3.$
  \item
  \label{G2}
 Every vertex of $G$ has weight $0$ and valency $3$.
 \item
   \label{G3}
 Every vertex of $G$ has weight $0$ and $|V(G)|=2g-2$.
 \end{enumerate}
\end{lemma}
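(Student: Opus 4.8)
The plan is to argue by a counting (Euler characteristic) computation together with the stability hypothesis. Write $|V| = |V(G)|$, $|E| = |E(G)|$, and recall that $b_1(G) = |E| - |V| + 1$ since $G$ is connected. The genus formula \eqref{genus} gives
\[
g = b_1(G) + \sum_{v \in V} w(v) = |E| - |V| + 1 + \sum_{v\in V} w(v),
\]
so that
\[
|E| = g - 1 + |V| - \sum_{v\in V} w(v).
\]
On the other hand, the handshake lemma says $2|E| = \sum_{v\in V} \val(v)$, and stability means that every vertex $v$ satisfies $\val(v) + 2w(v) \geq 3$ (weight-zero vertices have valency at least $3$; vertices of positive weight have valency at least $1$, and in all cases $\val(v) + 2w(v) \geq 3$, which is the clean way to package the condition). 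Summing this inequality over $v \in V$ yields $2|E| + 2\sum_v w(v) \geq 3|V|$, i.e. $|V| \leq \tfrac23\bigl(|E| + \sum_v w(v)\bigr)$.

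Substituting the expression for $|E|$ into this bound gives an inequality purely in $|E|$ and $\sum_v w(v)$; carrying out the elimination, one finds $|E| + \sum_v w(v) \leq 3g - 3$, and since $\sum_v w(v) \geq 0$ this immediately yields $|E| \leq 3g-3$, the first assertion. Moreover this chain of inequalities makes the equivalences transparent: equality $|E| = 3g-3$ forces $\sum_v w(v) = 0$ \emph{and} forces the stability inequality $\val(v) + 2w(v)\geq 3$ to be an equality at every vertex, which combined with $w(v)=0$ means $\val(v) = 3$ for all $v$; this is precisely \eqref{G2}. Conversely, if every vertex has weight $0$ and valency $3$, then $2|E| = 3|V|$ and $\sum_v w(v) = 0$, and plugging into the displayed formula for $|E|$ gives $|E| = g-1+|V|$ with $|V| = \tfrac23|E|$, whence $|E| = 3g-3$ and $|V| = 2g-2$; this shows \eqref{G2} $\Rightarrow$ \eqref{G1} and \eqref{G2} $\Rightarrow$ \eqref{G3}. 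Finally, \eqref{G3} $\Rightarrow$ \eqref{G1} is the same substitution: $\sum_v w(v) = 0$ and $|V| = 2g-2$ give $|E| = g - 1 + (2g-2) = 3g-3$.

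There is no real obstacle here; the only point requiring a little care is the bookkeeping to make the three conditions cycle correctly, and in particular noticing that \eqref{G1} already forces both $\sum_v w(v)=0$ and termwise equality in the stability bound — so one should present the proof of \eqref{G1} $\Rightarrow$ \eqref{G2} first and then close the loop cheaply via the substitution identity, rather than proving all six implications separately. One should also remark at the start that $g \geq 2$ (hence $3g-3 \geq 3 > 0$) is what makes the stability inequality $\val(v) + 2w(v) \geq 3$ available at every vertex, which is the hypothesis the whole argument rests on.
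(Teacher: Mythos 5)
Your proof is correct, but it takes a genuinely different route from the paper's. The paper first invokes the finiteness of the set of stable graphs of a given genus to pick a graph with the maximal number $M$ of edges, shows by a replacement trick (trading each positive-weight vertex for a weight-zero vertex carrying that many loops, which adds edges without changing the genus or breaking stability) that such an extremal graph has all weights zero, and only then bounds $M\le 3g-3$ via $3|V|\le 2M$ and the genus formula; the implications \eqref{G1}$\Rightarrow$\eqref{G2}$\Rightarrow$\eqref{G3}$\Rightarrow$\eqref{G1} are then checked by the same kind of substitutions you use. Your argument instead works directly with an arbitrary stable graph, summing the local inequality $\val(v)+2w(v)\ge 3$ and eliminating $|V|$ through the genus formula; this buys independence from the finiteness assertion (which the paper states earlier without proof, and which is in practice deduced from this very bound), and it makes the equality analysis in \eqref{G1}$\Rightarrow$\eqref{G2} fall out automatically rather than only for the edge-maximizing graph. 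One small repair is needed in your justification of the local inequality: ``vertices of positive weight have valency at least $1$'' is not literally true, since a single vertex of weight $g$ with no edges is a stable graph of genus $g$ (it appears in Example~\ref{toptropex}); the correct three-case check is that $w(v)=0$ uses stability, $w(v)=1$ uses connectedness together with $g\ge 2$ to exclude the isolated weight-one vertex, and $w(v)\ge 2$ gives $2w(v)\ge 4$ so the valency is irrelevant. Since the packaged inequality $\val(v)+2w(v)\ge 3$ holds in every case, the slip is cosmetic and the summed inequality, and hence your whole argument, stands.
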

\begin{proof}
For any fixed genus   there exist only finitely many stable graphs, hence there exists a maximum, $M$, on the number of edges
that a stable graph of that genus can have.
Let $G$ be a graph having $M$ edges, then no vertex of $G$ has positive weight, for otherwise we could replace this vertex
by a weight-zero vertex with as many loops as its weight attached to it; this would increase  the number of edges without changing the genus or loosing stability.

So, every vertex of $G$ has weight $0$, hence, by stability, it has valency at least $3$, and we have
$3|V|\leq 2M$.
We obtain
$$
g=M-|V|+1+\sum_{v\in V}w(v)\geq M-2M/3+1 = M/3+1
$$
hence $M\leq 3g-3$, as claimed. 
By the above inequality,
 if $M=3g-3$     every vertex has (weight $0$ and) valency equal to $3$, proving \eqref{G1} $\Rightarrow $ \eqref{G2}.

For \eqref{G2} $\Rightarrow $ \eqref{G3}, we have $3|V|/2= |E| $
hence
$$
g= 3|V|/2-|V|+1 = |V|/2+1
$$
as wanted.
For \eqref{G3} $\Rightarrow $ \eqref{G1}, we have
 $
g= |E|-2g+2+1 = |E|-2g+3,
$ 
and we are done.
\end{proof}

Now,    the set of equivalence classes of   tropical curves of   genus $g\geq 2$ 
forms a nice moduli space, denoted here by $\Mgt$,  
constructed  in \cite{BMV} to which we refer for details (see also \cite {MIK5}).
Let us, for the moment, content ourselves to recall that  $\Mgt$ has a natural structure of topological space; it is   a ``stacky fan" in the terminology of  \cite{BMV}.  It is partitioned as follows
\begin{equation}
 \label{Mgt}
\Mgt = \bigsqcup _{(G,w) \in \SG} \Mt (G,w) 
\end{equation}
where $\SG$ denotes the   set of all  stable  weighted graphs of genus $g$   and
$ \Mt{(G,w)}$ is the set of all isomorphism classes of tropical curves  whose underlying   graph is $(G,w)$.

\begin{remark}
 The    description of $\Mgt$ given in \eqref{Mgt} implies that for every equivalence class of tropical curves there is a unique representative, up to isomorphism, whose underlying   graph is stable.  
 Therefore from now on we shall usually assume 
that our tropical curves have   stable underlying   graph. Such tropical curves are also called ``stable". So,  two stable tropical curves are isomorphic if and only if they are equivalent.
\end{remark}

  \subsection{Constructing the moduli space of tropical curves}
We shall here describe how to give a topologically meaningful structure to $\Mgt$.

Let us start from the stratification given in \eqref{Mgt}.
It is not hard to describe   each stratum $\Mt{(G,w)}$. Write $G=(V,E)$ and consider the open cone $\RR^{|E|}_{>0}$  with its euclidean topology. To every point $(l_1,\ldots, l_{|E|})$ in this cone there corresponds a unique tropical curve whose 
$i$-th edge has length $l_i$. Now consider the   automorphism group, $\Aut(G,w)$, of $(G,w)$; 
by definition, $\Aut(G,w)$ is the  set of automorphisms of $G$ which preserve the weights on the vertices; in particular 
 we have a homomorphism
from 
 $\Aut(G,w)$ to the symmetric group on $|E|$ elements.
Hence $\Aut(G,w)$
acts on  $\RR^{|E|}_{>0}$  by permuting the coordinates, and  the quotient by this action, endowed with the quotient topology,  is the space of isomorphism classes of tropical curves having $(G,w)$ as underlying graph:
$$
 \Mt{(G,w)} =\RR^{|E|}_{>0}/\Aut(G,w).
$$

Now,   the boundary of the closed cone $\RR^{|E|}_{\geq 0}$ naturally parametrizes tropical curves  with fewer edges   that  are     specializations, i.e.  weighted contractions, of tropical curves in the open cone. 
Indeed, the closure in $\Mgt$ of a stratum as above is a union of strata, and we have
\begin{equation}
 \label{poset}
 \Mt{(G,w)} \subset \overline{ \Mt {(G',w')}} \quad \Leftrightarrow \quad  (G',w') \to (G,w)
\end{equation}
(notation in \eqref{wct}) which gives an interesting  partial ordering on the set of strata appearing in \eqref{Mgt}.

It turns out    that the action of $\Aut(G,w)$ extends on the closed cone in such a way that the quotient
$$
 {\widetilde \Mt {(G,w)}} =\RR^{|E|}_{\geq 0}/\Aut(G,w) 
$$
identifies isomorphic curves, and hence   maps to $\Mgt$.

Now, there are a few equivalent ways to  construct $\Mgt$ as a topological space;  we proceed starting from  its ``biggest" strata (i.e. strata of maximum dimension), as follows. By Lemma~\ref{3g-3},  the maximum number of edges of a stable graph of genus $g$ is $3g-3$. Such graphs  correspond to our biggest strata, and we proceed by
  considering the map 
$$
 \bigsqcup _{{(G,w) \in \SG:}\atop{ |E|=3g-3}} {\widetilde \Mt{(G,w)}} \la \Mgt
$$
mapping a curve to its isomorphism  
class, as we observed above. Now, by Proposition~\ref{toptrop}  below,
  this map is surjective,
i.e.    every stable tropical curve can be obtained  as a specialization of a stable tropical curve with $3g-3$ edges.
Therefore we can endow $\Mgt$ with the quotient topology induced by the space on the left.

\begin{prop}
\label{toptrop}
  Let $(G,w)$ be a stable graph of genus $g$. Then there exists a stable graph $(G',w')$     of genus $g$ having $3g-3$ edges and such that
  $\Mt{(G,w)} \subset \overline{ \Mt {(G',w')}}$.

\end{prop}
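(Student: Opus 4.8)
The statement says that any stable graph $(G,w)$ of genus $g$ can be obtained by a weighted contraction from some stable graph $(G',w')$ of genus $g$ with the maximal number $3g-3$ of edges. By the equivalence \eqref{poset}, this is the same as producing $(G',w')$ with $\Mt{(G,w)} \subset \overline{\Mt{(G',w')}}$. The idea is to reverse the two operations that ``simplify'' a graph: (a) replacing a positive-weight vertex by a vertex with loops, and (b) splitting a vertex of high valency into several trivalent vertices joined by (new) edges. Each such operation strictly increases the edge count while preserving both the genus and stability, and each corresponds to a weighted contraction from the new graph onto the old one. Since by Lemma~\ref{3g-3} the edge count of a stable genus-$g$ graph is bounded by $3g-3$, iterating these operations must terminate, and by the equivalence in Lemma~\ref{3g-3} the terminal graph $(G',w')$ has exactly $3g-3$ edges, every vertex of weight $0$ and valency $3$.

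\textbf{Step 1: eliminate positive weights.} Given $(G,w)$ stable of genus $g$, for each vertex $v$ with $w(v) = m > 0$, replace $v$ by a weight-zero vertex carrying $m$ new loops. The genus is unchanged (the $m$ loops contribute $m$ to $b_1$, replacing the $m$ in $\sum_v w(v)$), the number of edges goes up by $m > 0$, and the resulting graph is still stable since the new vertex now has weight $0$ but valency at least the old valency plus $2m \geq 3$. Contracting all these loops recovers $(G,w)$, so by the ``loop'' case of weighted contraction we have $(G,w)$ as a specialization. After doing this at every positive-weight vertex we obtain a stable graph $(G_1, 0)$ of genus $g$, with $(G_1,0)\to(G,w)$, in which \emph{every} vertex has weight $0$.

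\textbf{Step 2: make everything trivalent.} Now suppose $G_1$ has a vertex $v$ of valency $d \geq 4$. Partition the $d$ edge-ends at $v$ into two nonempty groups, of sizes $a$ and $b$ with $a+b = d$ and $a,b\geq 2$; introduce a new vertex $v'$ of weight $0$ and a new edge $e$ joining $v$ to $v'$, reattaching the $b$ edge-ends of the second group to $v'$ instead of $v$. Then $v$ has valency $a+1$ and $v'$ has valency $b+1$, both $\geq 3$; the genus is unchanged (we added one vertex and one edge), the edge count increased by one, and stability is preserved. Contracting the new edge $e$ recovers $G_1$ (this is the non-loop case of weighted contraction, and since both ends had weight $0$ the resulting vertex has weight $0$). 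Repeating this ``vertex-splitting'' strictly decreases $\sum_v \max(0,\operatorname{val}(v)-3)$, so after finitely many steps we reach a weight-zero stable graph $(G',w')$ of genus $g$ in which every vertex has valency exactly $3$; by Lemma~\ref{3g-3}, $(G',w')$ then has exactly $3g-3$ edges. Composing the contractions from Steps 1 and 2 gives $(G',w')\to(G,w)$, and \eqref{poset} yields $\Mt{(G,w)}\subset\overline{\Mt{(G',w')}}$, as desired.

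\textbf{Main obstacle.} The argument is essentially bookkeeping, and the only genuinely delicate point is verifying that the operations in Steps 1 and 2 \emph{are} weighted contractions in the precise sense of \eqref{wct} and that composites of weighted contractions are again weighted contractions (so that \eqref{poset} applies to the composite, i.e. $\overline{\Mt{(G',w')}}$ is closed under passing to all further specializations). This follows from the combinatorial description of weighted contractions recalled after \eqref{wct}: the inclusion $E(G')\subset E(G)$ and the surjection $V(G)\to V(G')$ compose correctly, and in both Step 1 (loop contraction, weight of base vertex increases by $1$ each time) and Step 2 (contraction of a non-loop edge with both ends of weight $0$, giving a weight-$0$ vertex) the weight rule is exactly the one prescribed. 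A small care is also needed to ensure, in Step 2, that one can always split $d\geq 4$ ends into two groups each of size $\geq 2$, which is immediate; and that loops at $v$ are handled by counting each as contributing two ends, which one may always put the two ends of a given loop on opposite sides or the same side as convenient — either choice keeps all valencies $\geq 3$.
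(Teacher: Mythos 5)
Your proof is correct and follows essentially the same route as the paper: first trade each positive-weight vertex for a weight-zero vertex with that many loops, then repeatedly split vertices of valency at least $4$ along a new edge (the paper uses the balanced partition $\lfloor N/2\rfloor,\lceil N/2\rceil$, which is a special case of your $a,b\geq 2$ split), concluding via Lemma~\ref{3g-3} and \eqref{poset}. Your explicit remarks on composing weighted contractions and on the termination measure only make explicit what the paper leaves implicit.
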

\begin{proof}
  
We can  assume   $|E(G)|<3g-3$.
By \eqref{poset},
it suffices to show that there exists a   $(G',w')$ with  $3g-3$ edges  such that
$(G',w') \to (G,w)$.
The  following proof is illustrated on an explicit case in Example~\ref{toptropex}.

  Let  $V_{+}\subset V(G)$ be the set of vertices of positive weight;
  consider the graph $G''$ obtained from $G$ by replacing every $v\in V_{+}$ by a weight-zero vertex with $w(v)$ new loops attached to it.
 
By construction,  $G''$  has all vertices of weight zero, and specializes to $(G,w)$  by contracting every one of the  new loops   ($\sum_{v\in V_{+}}w(v)$ of them).

If every vertex of $G''$ has valency $3$ we are done by Lemma~\ref{3g-3}.

So, suppose
$G''$ has some vertex, $v$, of valency  $N\geq 4$.
We shall construct a graph $G'''$ which specializes to $G''$ by contracting to $v$  
one edge whose ends have both valency less than $N$. Iterating this construction until  there are no vertices of valency more than $3$ we are done.

We   partition the set, $H_v$, consisting in the $N$   half-edges adjacent to $v$,  into two subsets,
$H_v^1$ and $H_v^2$, of respective cardinalities $N_1=\lfloor N/2\rfloor$ and $N_2=\lceil N/2\rceil$.
As $n\geq 4$ we have
$$
2\leq N_i\leq N-2.
$$
Consider the graph $G'''$ obtained from $G''$ by  replacing $v$ by a non-loop edge $e$ whose ends, $u_1$ and $u_2$,
are attached to, respectively, $H_v^1$ and $H_v^2$. As $u_i$ has valency $N_i+1$, the graph $G'''$ is stable and
its vertices $u_i$ have both valency less than $N$.
It is clear that contracting $e$ in $G'''$ gives back our $G''$. So we are done.
\end{proof}
\begin{example}
\label{toptropex}
 The following picture illustrates the proof of Proposition~\ref{toptrop} on the genus-$2$ graph $G$ consisting of one vertex with weight $2$ and no edges. 
 
 On the right   we see the two possible  
 graphs  $G'$,  corresponding (in the proof) to   different distributions in $G''$ of the four half-edges adjacent to  $v$.
 Of course, $G''$ is obtained from $G'$ contracting the edge $e$, and $G$ is obtained by contracting both loops of $G''$.

 \begin{figure}[h]
\begin{equation*}
\xymatrix@=.5pc{
 &G&&&&&&&&&&G''&&&&&&&&&&G'\\
 &&&&&&&&&&&&&&\\
 &{\bullet   _{\scriptstyle {\ 2}}}&&&&&&\ar@{->}[llll]   &&&&*{\circ}\ar@{-}@(ur,dr) ^(.06){v}\ar@{-}@(ul,dl)  
 &&&&&&&\ar@{->}[llll] &&*{\circ}\ar@{-} @/^ 1pc/[rr] \ar@{-} @/_.2pc/[rr] \ar@{-} @/_.9pc/[rr]_{e}_(.01){u_1}_(.9){u_2}  &&*{\circ} &&&&&&&\\
    &&&&&&&&&&&& &&    &&&&&&&&&&&& &&\\    
    &&&&&&&&&&&& &&    &&&&&&&&&&&& &&\\
&&&&&&&&&&&&&&&&&&\ar@{->}[lllluu] &&*{\circ}\ar@{-}@(ul,dl)  \ar@{-}[rr]_{e}^(.01){u_1}^(.9){u_2}&&*{\circ}\ar@{-}@(ur,dr) &&
}
\end{equation*}
\caption{Proof of Proposition~\ref{toptrop}}
\end{figure}
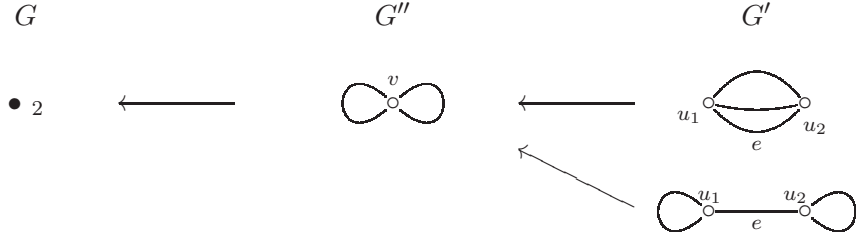
\end{example}
As it turns out,  the topological space $\Mgt$ 
is connected, 
 Hausdorff, and of pure dimension $3g-3$ (i.e. it has a dense open subset which is a $(3g-3)$-dimensional orbifold over $\RR$).
 More details can be found in \cite{BMV} and \cite{Chbk}; see also \cite{CGP} for some recent results on  the topology   of $\Mgt$.

\subsection{Extended tropical curves} Let us now focus on the fact that, as it is easy to check, $\Mgt$ is not compact; this 
is caused by
  edge-lengths being allowed to grow arbitrarily. In fact, we explained before how adding weights on the vertices enabled us to control edge-lengths going to zero; on the other hand  we still cannot control edge-lengths going to infinity.

This problem can be solved    by further generalizing the definition of tropical curve, by allowing edges of infinite length   (this is done in  \cite{Chbk} using an idea of G. Mikhalkin).

\begin{defi}
  An {\it extended tropical curve} is a triple $\Gamma = (G,    \ell, w)$ where $(G,w)$ is a stable graph
  and $\ell:E \to \RR_{>0}\cup \{\infty\}$ an ``extended" length function.
 \end{defi}

Now,  we
compactify    $\RR \cup \{\infty\}$ by the Alexandroff one-point compactification, and consider its subspaces with the induced topology.
The moduli space of extended tropical curves with fixed underlying weighted graph is
$$ 
 \overline{\Mt  {(G,w)}}={{( \RR_{>0}\cup \{\infty\})^{|E|}}\over{\Aut(G,w)}} 
$$
  with the quotient topology. Now, just  as we did for $\Mgt$, we  extend  the action of $\Aut(G,w)$ on the closure
  of $( \RR_{>0}\cup \{\infty\})^{|E|}$, so that the quotient  $$
 {\widetilde {M_{\infty}^{\trop}}}(G,w)=  {{( \RR_{\geq 0}\cup \{\infty\})^{|E|}}\over{\Aut(G,w)}} 
$$
is a compact topological space which maps  to the moduli space    for extended tropical curves, written $\Mgtb$.
Arguing as in the previous subsection,  
we have a surjection
 \begin{equation}
 \label{Mgtb}
\bigsqcup _{{(G,w) \in \SG:}\atop{ |E|=3g-3}}  {\widetilde {M_{\infty}^{\trop}}}(G,w) \la   \Mgtb = \bigsqcup _{(G,w)\in \SG} \overline{\Mt  {(G,w)}}.
\end{equation}
It is not hard to prove that $\Mgtb$, with the quotient topology, is    compact, normal, and contains $\Mgt$ as dense open subset.

\begin{remark}
  As we shall see in the next section, the introduction of extended tropical curves 
 has a precise meaning in relating tropical curves to algebraic curves. 
 A tropical curve will be shown to correspond  to   families of smooth curves degenerating to nodal ones,
while an extended tropical curve  will correspond  to   families of nodal curves  degenerating,    again, to nodal ones.
For instance, under this correspondence  an extended tropical curve $\Gamma = (G, w, \underline{\infty})$, all of whose edges have length equal to $\infty$, will correspond  to locally trivial families all of whose fibers have dual graph $(G,w)$; see Proposition~\ref{corrK}.  
 \end{remark}
 \
 
  \section{From algebraic curves to tropical curves}
  \label{FromSc}
  The primary goal of this section is to show that, as we just mentioned,  
   tropical curves are associated to degenerations of smooth curves  to   singular ones.
     We first need to introduce   some terminology and some conventions.

  \subsection{Algebraic curves}
    
 By {\it algebraic curve} we mean a     projective  variety of  dimension one over an algebraically closed field,
unless we specify otherwise. For reasons that will be explained in the next subsection, we shall be interested exclusively   in {\it nodal} curves, i.e. reduced (possibly reducible) curves admitting at most nodes as singularities, and we shall usually omit to mention it. 

To a   curve $X$   one associates its {\it (weighted) dual graph}, written $(G_X, w_X)$ and defined as follows.  The vertex-set of $G_X$  is the set of irreducible components of $X$, and the weight of a vertex/component
 is its geometric genus (i.e. the genus of its desingularization);  the edge-set
  is the set of nodes of $X$, with the ends of an edge/node equal to the
irreducible components of $X$ on which the   node lies, so that 
 loops   correspond to nodes of irreducible components.
 
 In symbols, for any vertex $v$ of  $G_X$ we write $C_v\subset X$ for the corresponding irreducible component;  $w_X(v)$ is the genus of the normalization, $C_v^{\nu}$, of $C_v$. We shall abuse notation by using the same symbol
for a node of $X$ and the corresponding edge of $G_X$.

From now on, we shall assume that $X$ is a  connected  curve.

We define $X$ to be {\it stable} if so is its dual graph  $(G_X, w_X)$.

\begin{remark}
 It is well known that  a connected   curve  is stable if and only if it has finitely many  automorphisms,
 if and only if its dualizing line bundle is ample; see \cite{DM}, \cite{HM}  or \cite {gac}.
 \end{remark}

We want to highlight the connection between a curve and its dual graph. Let us begin 
by proving the following basic
\begin{claim}
A  curve $X$ has the same (arithmetic) genus as its dual graph.
  \end{claim}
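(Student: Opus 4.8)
The plan is to compute both genera and see they agree. Recall that the \emph{arithmetic genus} of a connected curve $X$ is $p_a(X) = 1 - \chi(\cO_X) = h^1(X, \cO_X)$, while the genus of the dual graph $(G_X, w_X)$ is, by definition~\eqref{genus}, $b_1(G_X) + \sum_{v} w_X(v)$. So I need to show
\[
h^1(X, \cO_X) = b_1(G_X) + \sum_{v \in V(G_X)} w_X(v).
\]

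First I would introduce the normalization $\nu \colon X^\nu \to X$, where $X^\nu = \bigsqcup_v C_v^\nu$ is the disjoint union of the normalizations of the irreducible components. The key tool is the short exact sequence of sheaves on $X$
\[
0 \to \cO_X \to \nu_* \cO_{X^\nu} \to \cS \to 0,
\]
where $\cS$ is a skyscraper sheaf supported at the nodes of $X$; since each node is an ordinary double point, the stalk of $\cS$ at each node is one-dimensional, so $h^0(X, \cS) = \#\{\text{nodes}\} = |E(G_X)|$. Taking the long exact sequence in cohomology and using $H^1(X, \nu_*\cO_{X^\nu}) = H^1(X^\nu, \cO_{X^\nu}) = \bigoplus_v H^1(C_v^\nu, \cO_{C_v^\nu})$ (as $\nu$ is finite, hence affine), together with $h^1(C_v^\nu, \cO_{C_v^\nu}) = w_X(v)$ by definition of the weight as geometric genus, I get
\[
0 \to H^0(\cO_X) \to H^0(\cO_{X^\nu}) \to H^0(\cS) \to H^1(\cO_X) \to \textstyle\bigoplus_v H^1(\cO_{C_v^\nu}) \to 0.
\]

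Now I would take alternating sums of dimensions. Since $X$ is connected, $h^0(X,\cO_X) = 1$; and $h^0(X^\nu, \cO_{X^\nu}) = \#\{\text{components of } X^\nu\} = |V(G_X)|$ because each $C_v^\nu$ is irreducible and projective over an algebraically closed field. Plugging in:
\[
1 - |V(G_X)| + |E(G_X)| - h^1(X,\cO_X) + \sum_v w_X(v) = 0,
\]
hence
\[
h^1(X, \cO_X) = |E(G_X)| - |V(G_X)| + 1 + \sum_v w_X(v) = b_1(G_X) + \sum_v w_X(v),
\]
using that for a connected graph $b_1(G) = |E| - |V| + 1$. This is exactly $g(G_X, w_X)$, so $p_a(X) = g(G_X, w_X)$.

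\textbf{Main obstacle.} There is no serious obstacle; the only points needing a word of care are (i) that the quotient sheaf $\cS$ really is a skyscraper of length one at each node — this is the local computation $\widehat{\cO}_{X,p} \hookrightarrow k[[t]] \times k[[s]]$ for a node, with one-dimensional cokernel — and (ii) the bookkeeping that $\nu_*$ is exact on cohomology because $\nu$ is finite. One could alternatively phrase the whole argument via the normalization sequence for $\chi$ directly, i.e. $\chi(\cO_X) = \chi(\cO_{X^\nu}) - |E(G_X)|$, which sidesteps writing out the full long exact sequence, but the version above makes the appearance of $b_1$ most transparent.
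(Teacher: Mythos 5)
Your proof is correct and follows essentially the same route as the paper: the normalization sequence $0\to\cO_X\to\nu_*\cO_{X^{\nu}}\to\cS\to 0$, its long exact cohomology sequence, and the count $|E|-|V|+1=b_1(G_X)$ together with $h^1(C_v^{\nu},\cO_{C_v^{\nu}})=w_X(v)$. The extra remarks you include (the local length-one computation at a node and the finiteness of $\nu$) are fine but only make explicit what the paper leaves implicit.
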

First of all, recall that the   genus of $X$ is defined as 
 $g=h^1(X,\cO_X)$.
  
Now, write $G_X=(V,E)$, and consider    the normalization map
$$
\nu:X^{\nu}=\bigsqcup_{v\in V}C_v^{\nu}\la X.
$$ 
The associated map of   structure sheaves yields an exact sequence
\begin{equation}
\label{smap}
0\la \cO_X\la \nu_* \cO_{X^{\nu}} \la \cS\la 0
\end{equation}
where $\cS$ is a skyscraper sheaf supported on the nodes of $X$; 
 the associated exact sequence in cohomology is as follows (identifying the cohomology groups of  $\nu_* \cO_{X^{\nu}} $
 with those of $\cO_{X^{\nu}}$ as usual)
 
\begin{eqnarray*}
 0\la & H^0(X,\cO_X)\la & H^0(X^{\nu},\cO_{X^{\nu}})\stackrel{\tilde{\delta}}{\la} k^{|E|}\la \\
\la & H^1(X,\cO_X)\la & H^1(X^{\nu},\cO_{X^{\nu}})\la  0.\\
 \end{eqnarray*}
Hence
$$
g=h^1(X^{\nu},\cO_{X^{\nu}})+|E| -|V| +1=\sum_{v\in V}g_v+b_1(G_X)=g(G_X,w_X) 
$$
where $g_v=h^1(C_v^{\nu},\cO_{C_v^{\nu}})$ is the  genus of $C_v^{\nu}$.  By \eqref{genus}  we conclude that $X$ and $(G_X,w_X) $  have the same genus. The claim is proved.
 
\subsection{A graph-theoretic perspective on the normalization of a curve}
 \label{gtp}
We shall continue the preceding  analysis and   prove  that the map $\tilde{\delta}$ in the   cohomology sequence above can be identified with the coboundary map of the graph $G_X$.

Let us  first describe $\tilde{\delta}$. Let $ {\Phi}= \{\phi_v\}_{v\in V}$ be an element in $H^0(X^{\nu},\cO _{X^{\nu}}) $,
so that $\phi_v\in H^0(C_v^{\nu}, \cO _{C_v^{\nu}})\cong k $ for every vertex $v$.  Now  $\tilde{\delta}$ is  determined up
to a choice of sign for each node of $X$, i.e. for each edge in $E$; this choice of sign amounts to the following. Let $e\in E$ be a node of $X$,
then $e$ corresponds to two (different) branch-points on the normalization, $X^{\nu}$, of $X$, each of which lies in a component of $X^{\nu}$;
let us denote by $v_e^+$ and $v_e^-$ the vertices corresponding to these two (possibly equal) components of $X^{\nu}$
(so that the node $e$ lies in $C_{v_e^+}\cap C_{v_e^+}$).
Now we have
$$
\tilde{\delta}(\Phi) =  \{\phi_{v^+_e}- \phi_{v^-_e}\}_{e\in E}.
$$
To describe this map in graph-theoretic terms
we need a brief excursion into   graph theory, which will also be useful later in the paper.
 
For a pair of   vertices, $v,w$, we set
\begin{equation}
 \label{[vw]}
 [v,w]:=\kappa_{v,w}=\begin{cases}
1 & \text{ if } v=w, \\
0 & \text{ otherwise;}
\end{cases}
\end{equation}
this gives  a bilinear pairing on $C_0(G_X, k)$,   the $k$-vector space having $V$ as basis.

An
  orientation on $G_X$ is a pair
of maps $s, t:E\to V$ so that  every edge $e$ is oriented from $s(e)$ to $t(e)$,
  where  $s(e)$ and $t(e)$ are the ends of $e$. Now we can define  the boundary map
 
  $$
  C_1(G_X, k )\stackrel{\partial }{\la}   C_0(G_X, k ); \quad \quad \sum_{e\in E}c_e e\mapsto \sum_{e\in E}c_e \bigr(t(e)-s(e)\bigl) 
  $$
   (where $C_1(G_X, k)$ is the $k$-vector space having $E$ as basis).
We need to define the coboundary map,
  $$
  C_0(G_X, k)\stackrel{\delta}{\la} C_1(G_X, k). 
$$
We define  $\delta$   as the   linear extension of the following: for every $v\in V$ and $e\in E$ we set, using \eqref{[vw]},
 $$
 \delta (v):=\sum_{e\in E}\bigr[v, \partial (e)\bigl]e
 $$
 where we have
\begin{equation}
 \label{fve}
\bigr[v, \partial (e)\bigl] = \begin{cases}
1 & \text{ if } v=t(e) \text{ and } t(e)\neq s(e_), \\
-1 & \text{ if } v=s(e) \text{ and } t(e)\neq s(e_),\\
0 & \text{ otherwise.}
\end{cases}
\end{equation}

Now, with the notation at the beginning of this subsection,
we assume that   the orientation maps $s$ and $t$ are such that $s(e)=v_e^-$ and $t(e)=v_e^+$.
We have 
$$
 H^0(X^{\nu},\cO_{X^{\nu}})\stackrel{\alpha}{\la} C_0(G_X, k)\stackrel{\delta}{\la} C_1(G_X, k)\stackrel{\beta}{\la}   k ^{|E|} 
$$
where $\alpha$ and $\beta$ are the obvious isomorphisms. In particular,   any vertex $u\in V$ viewed as element of $C_0(G_X, k)$  is  the image by $\alpha$ of 
$\Phi^u=\{\phi_v^u\}_{v\in V}$ defined as follows
$$
\phi_v^u: =\kappa_{u,v}.
$$ 
By definition, we have
$ 
\tilde{\delta}(\Phi^u) =  \{\phi^u_{v^+_e}- \phi^u_{v^-_e}\}_{e\in E},
$ where
$$
\phi^u_{v^+_e}- \phi^u_{v^-_e}= \begin{cases}
1 & \text{ if } u=v^+_e \text{ and } v^+_e\neq v^-_e, \\
-1 & \text{ if } u=v^-_e \text{ and } v^+_e\neq v^-_e,\\
0 & \text{ otherwise.}
\end{cases}
$$
Comparing with \eqref{fve} we  obtain
 $$
\tilde{\delta}(\Phi^u)  = \beta\circ \delta (u) = \beta\circ \delta\circ \alpha  (\Phi^u).
 $$
Since  $\{ \Phi^u, \  \forall u\in V$\} is a basis for $ H^0(X^{\nu},\cO_{X^{\nu}})$ we conclude that
 the map $\tilde{\delta}$  is naturally identified with the coboundary map $\delta$
 of the graph $G_X$; more precisely we proved the following.

\begin{prop} Notations as above. Then
$ \tilde{\delta} = \beta\circ \delta\circ \alpha $ and we have  the following  exact sequence
$$
0\la H_1(G_X,k)\la H^1(X,\cO_X)\la H^1(X^{\nu},\cO_{X^{\nu}})\la 0.
$$
\end{prop}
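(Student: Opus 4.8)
The plan is to exploit the long exact cohomology sequence coming from \eqref{smap}, together with the explicit identification of $\tilde\delta$ just established, so that everything reduces to linear algebra over $k$. Recall that $\delta\colon C_0(G_X,k)\to C_1(G_X,k)$ is the coboundary map of the graph, and that by definition the cokernel of its transpose (the boundary map $\partial$) computes $H_0(G_X,k)$, while the kernel of $\partial$ is $H_1(G_X,k)$. Since $G_X$ is connected, $\partial$ is surjective onto the augmentation-zero subspace and $\ker\partial=H_1(G_X,k)$ has dimension $b_1(G_X)=|E|-|V|+1$. Dually, $\delta$ is injective on the quotient $C_0(G_X,k)/\langle\text{total sum}\rangle$ — equivalently, $\ker\delta$ is spanned by $\sum_{v}v$ when $G_X$ is connected — and $\operatorname{coker}\delta$ is naturally dual to $H_1(G_X,k)$, hence also of dimension $b_1(G_X)$.

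With this in hand I would proceed as follows. First, rewrite the cohomology sequence of \eqref{smap} as
$$
0\to H^0(X,\cO_X)\to H^0(X^\nu,\cO_{X^\nu})\xrightarrow{\tilde\delta} k^{|E|}\to H^1(X,\cO_X)\to H^1(X^\nu,\cO_{X^\nu})\to 0,
$$
and invoke the Proposition's own identity $\tilde\delta=\beta\circ\delta\circ\alpha$, where $\alpha$ and $\beta$ are the tautological isomorphisms $H^0(X^\nu,\cO_{X^\nu})\cong C_0(G_X,k)$ and $C_1(G_X,k)\cong k^{|E|}$. Then $\operatorname{im}\tilde\delta\cong\operatorname{im}\delta$, and the four-term exact sequence
$$
0\to k^{|E|}/\operatorname{im}\delta\to H^1(X,\cO_X)\to H^1(X^\nu,\cO_{X^\nu})\to 0
$$
identifies the kernel of $H^1(X,\cO_X)\to H^1(X^\nu,\cO_{X^\nu})$ with $\operatorname{coker}\delta$. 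Finally I would identify $\operatorname{coker}\delta$ with $H_1(G_X,k)$: this is the standard duality for finite graphs, $\operatorname{coker}\delta=C_1/\operatorname{im}\delta$ is canonically paired with $\ker\partial=H_1(G_X,k)$ via the bilinear form on $C_1(G_X,k)$ (the form having $E$ as orthonormal basis), because $\delta$ and $\partial$ are transpose to each other with respect to the chosen pairings. Splicing this identification into the four-term sequence yields exactly
$$
0\to H_1(G_X,k)\to H^1(X,\cO_X)\to H^1(X^\nu,\cO_{X^\nu})\to 0,
$$
as claimed.

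The only genuinely substantive point — and the one I would present most carefully — is the canonical identification $\operatorname{coker}\delta\cong H_1(G_X,k)$, i.e. checking that the transpose of $\delta$ with respect to the pairings $[\,\cdot\,,\cdot\,]$ on $C_0$ and the analogous pairing on $C_1$ is precisely $\partial$; this is immediate from the case analysis in \eqref{fve} compared with the definition of $\partial$, but it is where the combinatorial content lives. Everything else is either the formal long exact sequence, the already-proved factorization $\tilde\delta=\beta\circ\delta\circ\alpha$, or the bookkeeping that $H^0(X,\cO_X)=k$ and $H^0(X^\nu,\cO_{X^\nu})=k^{|V|}$ (using connectedness of $X$ and of each $C_v^\nu$), which makes the dimension count $\dim\ker\delta=1$ consistent with exactness at the $H^0$ spot. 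I would keep the exposition short and mostly reference the linear-algebra duality of graph (co)chain complexes, since that is exactly the "brief excursion into graph theory" the surrounding text has already set up.
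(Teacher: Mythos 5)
Your proposal is correct and follows essentially the same route as the paper: the identity $\tilde{\delta}=\beta\circ\delta\circ\alpha$ is exactly the basis computation with the functions $\Phi^u$ carried out in the text immediately preceding the Proposition, and the exact sequence is then read off from the cohomology sequence of \eqref{smap} by identifying the kernel of $H^1(X,\cO_X)\to H^1(X^{\nu},\cO_{X^{\nu}})$ with $\operatorname{coker}\delta$, which has dimension $b_1(G_X)$. Your extra care in justifying $\operatorname{coker}\delta\cong H_1(G_X,k)$ via the pairing on $C_1(G_X,k)$ and the fact that $\partial$ is the transpose of $\delta$ (so $\operatorname{coker}\delta$ is dual to $\ker\partial$) simply makes explicit a step the paper leaves implicit, and is compatible with its argument.
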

  \subsection{Families of algebraic curves over local schemes}
 As we mentioned,
from the point of view of tropical geometry, smooth algebraic curves are particularly interesting when considered in families
 specializing to singular curves. 
We shall need a local analysis,
hence we shall concentrate on families parametrized by a valuation scheme  or,   even more ``locally",
by a complete valuation scheme.
  
Now, it is well known that  in families of smooth curves
 singular degenerations are, in general, unavoidable. This phenomenon is reflected in the fact that $\Mgb$, the moduli scheme  of smooth curves of genus $g$,  is not projective.
 
 On the other hand for many purposes, including ours, it suffices to consider only degenerations to nodal curves, ruling out all other types of singularities; in moduli theoretic terms, the moduli space $M_g$ is a dense open subset in an irreducible, normal,  projective scheme, $\Mgb$, which is the moduli space for   stable curves. 
 
 We shall get back to $\Mgb$ later, let us now limit ourselves to the local picture, 
 described by the following well known fact, a variation on the classical Deligne-Mumford Stable Reduction Theorem, in
 \cite{DM}. 
 
 Recall that $K$ denotes a complete valuation field  and   $R$ its  valuation ring.
 
\begin{thm}
\label{srt} Let  $\cC$ be  a stable curve over  $K$.
 
 Then there exists a finite field extension  $ K'|K$   such that the base change $\cC'=\cC \times_{\Spec K} \Spec K'$ admits a unique model over the valuation ring  of $K'$ whose special fiber is a stable curve.
\end{thm}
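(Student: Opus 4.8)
The plan is to reduce the statement to the classical Deligne--Mumford Stable Reduction Theorem by a spreading-out and discrete-valuation argument, using the semistable reduction theorem as the geometric input. First I would dispose of the ``complete'' hypothesis and the general valuation: since $\cC$ is a scheme of finite type over $K$, it is defined over a finitely generated subfield $K_0\subseteq K$, and after enlarging $K_0$ we may assume the induced valuation $v_{K_0}$ on $K_0$ has a convenient form. The key reduction is that for the purpose of proving existence (and uniqueness) of a stable model, it suffices to work with a discrete valuation: one replaces $v_K$ by a rank-one discrete valuation dominating an appropriate local ring, or, more concretely, one invokes that any valuation ring is a filtered union of Noetherian local subrings and spreads the family out. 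Concretely I would pick a regular local (or at least Noetherian, excellent, normal local) subring $A\subseteq R$ with fraction field $K_0$ over which $\cC$ has a model, and then localize/complete along a height-one prime to land in the DVR setting.

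Next, in the discrete valuation ring case, I would invoke the Deligne--Mumford theorem directly: given the stable curve $\cC_0$ over the fraction field of a DVR $\cO$, there is a finite separable extension of the fraction field, with integral closure $\cO'$ a DVR (after localization), such that $\cC_0\times \Spec (\mathrm{Frac}\,\cO')$ extends to a family of stable curves over $\cO'$; this is exactly \cite{DM}. The heart of \cite{DM} is: first achieve a \emph{semistable} model after base change (via resolution of singularities of an arbitrary model, using that $g\ge 2$ so the curve is not rational, plus the semistable reduction theorem), then contract the exceptional $(-1)$- and $(-2)$-configurations / the unstable rational components to obtain a genuinely stable model. For uniqueness, the argument is the valuative criterion of properness applied to $\Mgb$: if two stable models had the same generic fiber, the induced two sections of $\Mgb$ over $\Spec \cO'$ would agree generically, hence agree, and then an automorphism-rigidity argument (stable curves of genus $g\ge 2$ have no infinitesimal automorphisms, so the relevant moduli functor is separated) upgrades equality of moduli points to an isomorphism of the two models over $\cO'$.

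I would then transport this back to the original non-discrete, possibly incomplete situation. The stable model over $\cO'$ is of finite presentation, so it spreads out: there is a finite extension $K'|K_0$ inside (a suitable extension of) $K$, with valuation ring $R'$, such that $\cC\times_{\Spec K_0}\Spec K'$ acquires a model over $R'$ with stable special fiber. Since $K'|K_0$ is finite and $K_0\subseteq K$, after replacing $K'$ by the compositum $K'\cdot K$ we get a finite extension of $K$ itself (one must check the compositum is again a non-Archimedean field over $K$, i.e. the valuation extends — automatic since $R$ is a valuation ring and integral extensions of valuation rings are valuation rings, up to localizing at a prime above the maximal ideal). Uniqueness of the stable model over $R'$ follows from the uniqueness proved over the DVR together with faithfully flat descent / the separatedness of $\Mgb$ as a moduli stack, which does not see the rank of the valuation.

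The main obstacle, and the step I expect to require the most care, is the passage between the valuation-ring setting of the statement and the DVR setting where \cite{DM} applies: one must argue that existence and uniqueness of a stable model are \emph{local} in the appropriate sense and insensitive to the rank of the valuation, which is essentially the content of ``spreading out + the valuative criterion for the properness of $\ocM_g\to\Spec\ZZ$'' but must be set up so that the finite field extension $K'|K$ one produces is honestly finite over $K$ and compatible with the given valuation. A clean way to phrase the whole proof, in fact, is: $\ocM_g$ is a proper Deligne--Mumford stack, so the valuative criterion of properness applied to the $K$-point $[\cC]\in\ocM_g(K)=M_g(K)$ gives, after a finite extension $K'|K$, an $R'$-point of $\ocM_g$ extending it, hence (since $\ocM_g$ is the moduli stack of stable curves and $g\ge 2$ kills automorphisms, ensuring the universal curve descends) a unique stable curve over $R'$ with the prescribed generic fiber; properness of $\ocM_g$ is itself the modern restatement of Theorem~\ref{srt}, so one is really citing \cite{DM}, and the only work is checking the hypotheses of the valuative criterion in the non-Noetherian valuation-ring generality, which is standard.
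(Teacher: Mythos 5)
Your primary route --- spread out to a finitely generated subfield $K_0$, pass to a Noetherian local subring $A\subseteq R$, ``localize/complete along a height-one prime to land in the DVR setting'', apply Deligne--Mumford there, then transport back via a compositum --- has a genuine gap at the reduction step. The restriction of $v_K$ to a finitely generated subfield is a rank-one valuation but in general \emph{not} discrete (its valuation ring is typically non-Noetherian), so it is simply not of the form ``localization of $A$ at a height-one prime''. If instead you take an auxiliary DVR $A_{\mathfrak p}$ at some height-one prime, that DVR is in general not dominated by $R$: the center of $v_K$ on $\Spec A$ is the prime $A\cap\mathfrak m_R$, which can have height $\geq 2$ or carry a non-discrete valuation. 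Consequently a stable model over such an auxiliary DVR has its ``special fiber'' over the wrong point, and base change along $A_{\mathfrak p}\to R'$ (which is not a local homomorphism) does not produce a model over $R'$ whose special fiber --- i.e.\ the fiber over the closed point of $\Spec R'$ --- is stable. The transport-back step therefore does not go through as written; what you actually need is the existence part of the valuative criterion applied \emph{directly} to the given (non-Noetherian) valuation ring $R$, which is legitimate because for proper, finite-type, quasi-separated morphisms the criterion holds with arbitrary valuation rings --- but then the whole DVR detour is both unnecessary and insufficient.

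Your closing ``clean way'' is, in substance, the paper's proof, and it is the version you should keep. The paper applies the valuative criterion of properness to the projective coarse space $\Mgb$, extending $\mu_{\cC}:\Spec K\to\Mgb$ to $\Spec R\to\Mgb$, and then --- this is where the finite extension $K'|K$ genuinely comes from --- uses the existence of a finite covering of $\Mgb$ carrying a tautological family of stable curves to convert that $R$-point of the coarse space into an honest stable curve over the valuation ring $R'$ of a finite extension $K'$. Your stack-theoretic phrasing (properness of $\Mgbst$ plus the valuative criterion for Deligne--Mumford stacks, with uniqueness from separatedness) packages the same content; the role played in the paper by the finite covering with tautological family is exactly the point you delegate to ``the valuative criterion for DM stacks allows a finite extension'', so be aware that this is where the actual work (beyond citing \cite{DM} for the DVR case) is hiding, not in a spreading-out argument. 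Also note that completeness of $K$ plays no role in this theorem, in your argument or in the paper's.
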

The theorem is represented  in the following commutative diagram.
 $$\xymatrix@=.5pc{
& \cC'\ar@{->} @/_.6pc/[rrrdd]\ar[dd]\ar@{^{(}->}[rrr]&&& \cC'_{R'} \ar[rrdd]&&&&&&&&\\
 &&&&&&&&&&&\\
&\cC \ar@{->} @/_.6pc/[rrrdd]  &&&\Spec K'\ar[rr]\ar[dd]&&\Spec R' \ar[dd]\ar[dddrrr]^{\mu_{\cC'_{R'}}}&&\\
&&&&&   && \\
&&  &&\Spec K \ar@{^{(}->}[rr]  \ar@{->} @/_.6pc/[drrrrr]_{\mu_{\cC}}&&\Spec R \ar[drrr]&&&&  \\
 &&&  &&&&&&\Mgb&&
}$$

\begin{proof}
 If $K$ is a discrete valuation field (not necessarily complete) this is the original Stable Reduction Theorem \cite[Cor. 2.7]{DM}.
Our statement is  a consequence of   it, and of some properties of the moduli scheme $\Mgb$.

Indeed, recall that $\Mgb$ is  projective,   it is a coarse moduli space for stable curves, and admits  a finite covering over which there exists a tautological  family of stable curves (i.e. the fiber over a point is isomorphic
to the curve parametrized by the image of that point in $\Mgb$).

Now, since $\Mgb$ is projective, by the valuative criterion of properness  the moduli map $\mu_{\cC}:\Spec K\to \Mgb$ associated to $\cC$ extends to a regular map
  $\Spec R\to \Mgb$, which, however,  needs not come from a family of stable curves over $\Spec R$.
 But, as we observed above, a
finite extension $R \ha R'$ exists such that
the base-changed   curve, $\cC'=\cC\times_{\Spec K} \Spec K'$, over the fraction field, $K'$,  of $R'$, admits a stable model, $\cC'_{R'}$, over $\Spec R'$.  
\end{proof}
  To avoid confusion, let us  point out that the  stable model, $\cC'_{R'}$, does not, in general, coincide with the base change  $\cC_R\times_{\Spec R} \Spec R'$.
  
 Some explicit constructions of   stable  reduction   can be found in \cite[Sect. 3C]{HM}.
 
\begin{remark}
\label{Cs}   
The map $\Spec R\to \Mgb$ in the diagram above is uniquely determined by $\cC$; hence so is
the image of the special point of $\Spec R$. This is   a stable curve defined over  the residue field, $k$,  of $R$ 
which will be denoted by $\cC_k$.
We can thus define a  {\it (stable) reduction} map for our   field $K$:

\begin{equation}
\label{redK}
\red_K: \ocM_g(K) \la \Mgb; \quad \quad \cC \mapsto \cC_k 
\end{equation}
where  $\ocM_g(K)$ denotes the set of   stable  curves of genus $g$ over $K$.

Let now $\Mgb(K)$ denote, as usual, the set of $K$-points of $\Mgb$.
We have a natural map
$$
\mu_K:\ocM_g(K)\la \Mgb(K);  \quad \quad \cC \mapsto \mu_\cC
$$
 where $\mu_{\cC}:\Spec K\to \Mgb$ is the moduli map  associated to $\cC$, already appeared  in Theorem~\ref{srt}.
 It is clear that the map $\red_K$ factors through $\mu$, i.e. we have
 \begin{equation}
\label{muK}
\red_K: \ocM_g(K) \stackrel{\mu}{\la} \Mgb(K) \la \Mgb  . 
\end{equation}
 \end{remark}
 
 \

So far, in Theorem~\ref{srt} and in Remark~\ref{Cs}, the valuation of  $K$, written  $v_K:K \to \RR\cup \{\infty\}$, did not play a specific role; it was  only used   to apply the valuative criterion of properness.
It will play a more important role in what follows.
 
The next statement  is a consequence  of  Theorem~\ref{srt}.  It can be found in various forms in  
 \cite{BPR2}, \cite{BPR1},  \cite{tyomkin}  and   \cite{viviani}.
 
\begin{prop}
\label{corrK} 
Let $\cC$ be a stable curve over $K$ and let $\cC_k$ be the stable curve over $k$ defined in  Remark~\ref{Cs}.
 Then
there exists an extended tropical curve $\Gamma_{\cC}=(G_{\cC}, \ell_{\cC}, w_{\cC})$ with the following properties.
 
\begin{enumerate}
 \item
$(G_{\cC},  w_{\cC})$ is the dual graph of $\cC_k$.
 \item
 $\Gamma_{\cC}$ is a tropical curve (i.e. all edges have finite length) if and only if   $\cC$ is smooth.
 \item
 If $ K'|K$ is a finite  extension and $\cC'$ the base change of $\cC$ over $K'$, then
  $\Gamma_{\cC}=  \Gamma_{\cC'}$.
 \end{enumerate}
\end{prop}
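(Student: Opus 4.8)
The plan is to construct $\Gamma_{\cC}$ explicitly from the stable model $\cC'_{R'}$ produced by Theorem~\ref{srt}, and then verify each of the three claimed properties. The underlying weighted graph $(G_{\cC}, w_{\cC})$ is of course $(G_{\cC_k}, w_{\cC_k})$, the dual graph of the special fiber $\cC_k$, so property (1) holds by construction. The content is in the definition of the length function $\ell_{\cC}$ and in properties (2) and (3).

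First I would recall the local structure of the stable model near a node of the special fiber. If $e$ is a node of $\cC_k$, then \'etale-locally near the corresponding point the total space $\cC'_{R'}$ looks like $\Spec R'[x,y]/(xy - f)$ for some $f$ in the maximal ideal of $R'$; the key invariant is $v_{K'}(f) \in \ZZ_{>0} \cup \{\infty\}$, where the value is finite precisely when $\cC'_{R'}$ is normal at that point (equivalently, the node persists only in the special fiber and the generic fiber is smooth there) and is $+\infty$ exactly when the node does not smooth out, i.e. $f = 0$ and the node is already present in the generic fiber $\cC'$. One then sets $\ell_{\cC}(e) := v_{K'}(f)$ after rescaling the valuation so that it takes values in $\RR_{>0}\cup\{\infty\}$ compatibly with $v_K$ (this is where the convention, fixed in the Notation section, that extensions $K'|K$ carry valuations inducing $v_K$ enters). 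Then property (2) is immediate: $\cC$ is smooth over $K$ iff $\cC'$ is smooth over $K'$ iff $\cC'_{R'}$ is normal iff every node of $\cC_k$ has finite "smoothing parameter" iff $\ell_{\cC}$ takes only finite values, i.e. $\Gamma_{\cC}$ is an honest tropical curve.

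The main obstacle is property (3): well-definedness, i.e. independence of the chosen extension. Given $K'|K$ finite and a further stable model, one must compare it with the stable model over a yet larger field. The uniqueness clause in Theorem~\ref{srt} does the heavy lifting: if $K''|K'$ is a further finite extension, the stable model $\cC''_{R''}$ is forced to be $\cC'_{R'}\times_{\Spec R'}\Spec R''$ by uniqueness, so the local equation $xy = f$ simply gets reinterpreted over $R''$, and $v_{K''}(f) = c\cdot v_{K'}(f)$ where $c$ is the ramification index of $K''|K'$ — which is exactly the rescaling factor built into the normalization of the valuations, so $\ell_{\cC}$ is unchanged. Thus both $\Gamma_{\cC}$ (computed over any allowable model of $\cC$) and $\Gamma_{\cC'}$ (computed over any allowable model of $\cC'$) may be computed using a common sufficiently large extension, where they literally agree; hence $\Gamma_{\cC} = \Gamma_{\cC'}$.

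I would organize the write-up as: (i) reduce to the stable model via Theorem~\ref{srt}; (ii) the \'etale-local normal form at a node and the definition of $\ell_{\cC}(e)$; (iii) check (1) and (2), which are essentially bookkeeping; (iv) invoke uniqueness in Theorem~\ref{srt} plus multiplicativity of valuations in towers to get (3). One should also remark that $G_{\cC}$ is automatically stable of genus $g$ because $\cC_k$ is a stable curve of genus $g$, so $\Gamma_{\cC}$ is a legitimate extended tropical curve in the sense of the earlier definition; this is where the hypothesis $g\geq 2$ is silently used.
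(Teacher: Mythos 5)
Your construction is essentially the paper's own proof: define $\ell_{\cC}(e)=v_{K'}(f_e)$ from the local equation $xy=f_e$ of the stable model furnished by Theorem~\ref{srt}, get (1) by construction, (2) from $f_e\neq 0$ iff the family smooths the node, and (3) from uniqueness of the stable model under further base change (a point the paper itself delegates to the cited references, together with independence of the choice of local equation). One minor correction: under the paper's conventions the valuation on any extension is real-valued and restricts to $v_K$ exactly, so $v_{K'}(f_e)$ need not be an integer and no ramification-index rescaling ever enters --- $v_{K''}(f_e)=v_{K'}(f_e)$ on the nose.
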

\begin{remark}
 As it will be clear from the proof, $\Gamma_{\cC}$ depends on $v_K$ even though our notation does not specify it.
\end{remark}
\begin{proof}
 The main point of the proof  consists  in completing the definition of  the tropical curve $\Gamma_{\cC}$ by explaining how the length function $\ell_{\cC}$ is defined. Using the notation of the previous theorem, for some finite  ring extension $R\subset R'$ we can assume that $\cC_k$ is the special fiber of a family of stable curves over      $R'$. Let $e$ be a node of $\cC_k$, then the equation of the family  locally at $e$ has the form
$xy=f_e$, with $f_e$ in the maximal ideal of $R'$.
We set 
$$
\ell_{\cC}(e)=v_{K'} (f_e)
$$
where $v_{K'}$ is the valuation    of the field $K'$.
Indeed, as $K$ is complete and the extension $K'|K$ is finite,
the valuation $v_{K'}$ is uniquely determined by $v _K$, moreover  $K'$ is complete;  see \cite[Prop. XII.2.5]{Lang}.
Now, if $\cC$ is smooth then for every node $e$ the function $f_e$ above is not zero, hence $\ell_{\cC}(e)\in \RR_{>0}$
and  $\Gamma_{\cC}$  is a tropical curve. Conversely, if $\cC$ has some node,
then this node specializes to some  node, $e$, of $\cC_k$. For such an $e$ the local equation, $f_e$, is equal to zero
 because the family is locally reducible. Therefore  $\ell_{\cC}(e) =\infty$, and 
part (2) is proved. It remains to prove that the definition of $\ell_{\cC}$ does not depend on the choice of the local equation or of the field extension. This is   standard, we refer to the above mentioned papers.
\end{proof}

  \begin{remark}
  \label{CCs}
Let $K$ be our complete valuation field with valuation $v_K$, and $\ocM_g(K)$ as defined in Remark~\ref{Cs}.
By what we said so far we can define a   {\it local    tropicalization map}, $\trop_K$, as follows
\begin{equation}
\label{tropl}
\trop_K: \ocM_g(K) \la \Mgtb;\quad \quad \cC \mapsto \Gamma_{\cC}.
\end{equation}

As done in Remark~\ref{Cs} for the map $\red_K$,  one easily checks   that     $\trop_K$ factors as follows
 \begin{equation}
\label{muKK}
\trop_K: \ocM_g(K) \stackrel{\mu}{\la} \Mgb(K) \la \Mgtb. 
\end{equation}
We abuse notation and denote, again, $\trop_K: \Mgb(K) \la \Mgtb$.
 \end{remark}

Now,  for every finite   extension $ K'|K$   
we have a map of sets given by base-change
$$
\beta_{K,K'}:\ocM_g(K) \la \ocM_g(K');\quad \quad \cC \mapsto  \cC\times_{\Spec K} \Spec K'.
$$
A  consequence of  Proposition~\ref{corrK} is that the local tropicalization map is compatible with finite base change, that is we have
the following.
 
\begin{cor} Let $K'|K$ be a  finite extension of complete valuation fields. 
Then 
 $\trop_K = \trop_{K'}\circ \beta_{K,K'}$.
\end{cor}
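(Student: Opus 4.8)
The plan is to reduce the assertion to Proposition~\ref{corrK}(3)---the only ingredient with genuine content---after a routine check that the three maps in the statement are defined. First I would note that $K'$ satisfies the standing hypotheses: it is complete by assumption, its valuation $v_{K'}$ is the unique extension of $v_K$ (cf.~\cite[Prop.~XII.2.5]{Lang}), and $v_{K'}$ restricts to the trivial valuation on $k$ since $v_K$ does; hence $\ocM_g(K')$ and the tropicalization map $\trop_{K'}\colon\ocM_g(K')\to\Mgtb$ of Remark~\ref{CCs} are defined. I would also recall that $\beta_{K,K'}$ really does take values in $\ocM_g(K')$, because base change along a field extension carries a stable curve to a stable curve with the \emph{same} weighted dual graph.

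Then I would simply evaluate both sides of the claimed equality on an arbitrary $\cC\in\ocM_g(K)$. By the definition \eqref{tropl} of $\trop_K$ one has $\trop_K(\cC)=\Gamma_\cC$, whereas $(\trop_{K'}\circ\beta_{K,K'})(\cC)=\trop_{K'}\bigl(\cC\times_{\Spec K}\Spec K'\bigr)=\Gamma_{\cC\times_{\Spec K}\Spec K'}$. Proposition~\ref{corrK}(3) asserts precisely that these two extended tropical curves coincide; since $\cC$ was arbitrary, the two maps agree.

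It is worth recording where the substance of the argument actually lies, namely in Proposition~\ref{corrK}(3), which is granted here. In its proof one fixes a finite extension $K''|K$, with valuation ring $R''$, large enough that $\cC_k$ is realized as the special fiber of a family of stable curves over $R''$; viewing $K''$ also as a finite extension of $K'$, the very same family is a stable model of $\cC\times_{\Spec K}\Spec K''=(\cC\times_{\Spec K}\Spec K')\times_{\Spec K'}\Spec K''$, with the same special fiber $\cC_k$ and the same node equations $xy=f_e$. Hence $\ell_\cC(e)=v_{K''}(f_e)=\ell_{\cC\times_{\Spec K}\Spec K'}(e)$, giving the desired identity of tropical curves. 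The only---and very mild---obstacle is thus bookkeeping: one must know that a single finite extension and a single stable model can be used to compute both tropicalizations, which is exactly the independence of the choice of extension already built into the definition of $\ell_\cC$.
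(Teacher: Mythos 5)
Your proposal is correct and follows exactly the paper's route: the corollary is stated there as an immediate consequence of Proposition~\ref{corrK}, and your pointwise evaluation $\trop_K(\cC)=\Gamma_{\cC}=\Gamma_{\cC'}=(\trop_{K'}\circ\beta_{K,K'})(\cC)$ via part (3) is precisely that argument. The extra remarks on the standing hypotheses for $K'$ and on where the substance of Proposition~\ref{corrK}(3) lies are accurate but not a different method.
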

 
 \subsection{Tropicalization and analytification of $\Mgbst$}
 In the previous subsection we have shown that  to a local family of stable curves there corresponds an extended tropical curve, and this  is compatible with base change.
We can then ask for more,  namely for a
 global  version of this correspondence, where by ``global" we mean involving  the entire moduli spaces. That it ought to be possible to satisfy such a request    is indicated by another type of evidence, consisting in some  analogies between the moduli space of  tropical curves and the moduli space of stable curves, as we shall now illustrate.
 
As we have seen in \eqref{Mgt} and \eqref{Mgtb}, the moduli space of tropical curves is partitioned via stable graphs. Now, this is    true also
for the moduli space of stable curves, for which  we have 
$$
\Mgb = \bigsqcup _{(G,w) \in \SG}  M{(G,w)} 
$$
where $M{(G,w)}$ denotes the locus of stable curves having $(G,w)$ as dual graph (recall that $S_g$ is the set of   stable graphs of genus $g$).
The stratum  $M{(G,w)}$ turns out to be  an irreducible quasi-projective variety of dimension $3g-3-|E(G)|$,
 equal to the codimension of the locus $\Mt{(G,w)} $ in $\Mgt$.
Moreover, these strata can be  shown to form a partially ordered set with respect to inclusion of closures,  and we have:
$$
 M{(G,w)} \subset \overline{M_{(G',w')}}
\quad \Leftrightarrow \quad  (G,w) \to (G',w').
$$
This is analogous, though reversing the arrow,  to what happens in  $\Mgtb$, as we have seen in  \eqref{poset}.

Let us  go back to the purpose of this subsection:  to obtain  a global picture from which the local correspondence described earlier can be derived.
We proceed  following \cite{ACP} to which we refer for details.

The above described analogies     between $\Mgt$ and $\Mgb$, together with the construction of $\Mgt$ by means of Euclidean cones, 
  indicate  that  $\Mgt$ should be the ``skeleton"
of the moduli stack, $\Mgbst$, of stable curves.  We  must  now consider the stack, $\Mgbst$, instead of the scheme, $\Mgb$, because the former has a toroidal structure that the latter does not have. It is precisely the toroidal structure that enables us to
construct  the above mentioned skeleton as a   {\it generalized cone complex}  associated to $\Mgbst$, denoted by ${\Sigma}(\Mgbst)$,
and compactified by an   {\it extended generalized cone complex}, written $\ov{\Sigma}(\Mgbst)$. Having constructed these spaces one 
  proves that there  are isomorphisms  of generalized cone complexes  represented by   the vertical arrows of the following commutative diagram (see \cite[Thm. 1.2.1]{ACP})
  
  \
  
\begin{equation}
 \label{sigma}
\xymatrix@=.5pc{
   &&&\Sigma(\Mgbst) \ar@{^{(}->}[rrr]  \ar[ddd]_{\cong}&&&{\ov \Sigma}(\Mgbst)  \ar[ddd] ^{\varphi}_{\cong}&&\\
&&&&&   && \\
&&&&&   && \\
   &&&\Mgt \ar@{^{(}->}[rrr]  &&&\Mgtb  &&&&  \\
 &&&  &&&&&& && } 
\end{equation}
The above diagram offers an interpretation, from a broader context,  of  the  global analogies between the moduli spaces of stable and tropical curves 
illustrated earlier.

We now turn to  the local correspondences described in the previous subsection   in order to place them into a more general framework.
To do that, we introduce the {\it Berkovich analytification}, $\Mgban$, of $\Mgb$.  

In this paper, we cannot afford to introduce analytic geometry over non-Archimedean fields  in a satisfactory way,
despite  its deep connections  
to tropical geometry of which  a suggestive example 
  is the   application to  the moduli theory of curves we are about to describe.
On the one hand this would require a consistent amount of space, on the other hand it
 would take us too far from our main topic. 
 
The starting point is that, following   \cite[Chapt. 3]{berkovich}, to any scheme  $X$  over $k$ one associates its   analytification, $X^{\an}$,
  a   more suitable space  from the analytic point of view.

We apply this  to the scheme $\Mgb$, and   limit ourselves to mention  that $\Mgban$ is a Hausdorff, compact
topological space, and to     describe it  set-theoretically.  

To do that, 
for any  non-Archimedean field $K|k$ consider the set,     $\Mgb(K)$,  of $K$-points of $\Mgb$.
Then  we have
\begin{equation}
 \label{Mgban}
\Mgban=\frac{\bigsqcup _{K|k} \Mgb(K)}{\sim_{\an}}
\end{equation}
where the union is over all non-Archimedean extensions $K|k$, and the equivalence relation $\sim_{\an}$
is defined as follows. Let  $\xi_1$ and $\xi_2$ be  $K_i$-points of $\Mgb$, i.e.   $\xi_i:\Spec K_i\to \Mgb$ for $i=1,2$.
We set $\xi_1\sim_{\an} \xi_2$ if there exists a third extension $K_3|k$ which extends also
$K_1$ and $K_2$, and a point $\xi_3\in  \Mgb(K_3)$ such that the following diagram is commutative
 $$\xymatrix@=.5pc{
 &&&&&&\Spec K_3\ar[ddrr]\ar[ddll]\ar[dddd]^{\xi_3}&&\\
  &&&&&   && \\
&&&& \Spec K_1 \ar[ddrr]_{\xi_1}&&&&\Spec K_1  \ar[ddll]^{\xi_2}&&&&  \\
  &&&   &&&&&&&\\
&&&&&&\Mgb&&&&&&&
}$$
We conclude by recalling that \eqref{Mgban} induces a bijection between $\Mgban(K)$ and $ \Mgb(K)$ for every $K$; see \cite[Thm 3.4.1]{berkovich}.

\begin{remark}
 From the above description, a  point  of $\Mgban$
is represented by a   stable curve $\cC$ over a non-Archimedean field  $K$.
By Theorem~\ref{srt} we can furthermore assume (up to field extension) that $\cC$ admits a stable model over the valuation ring of $K$.
\end{remark}
Now,  we can use   the local tropicalization maps defined in \eqref{tropl}  to  define  a map
$$
\trop: \Mgban \la \Mgtb 
$$
such that 
 its  restriction  $\Mgban(K)= \Mgb(K)$ coincides with the map $\trop_K$ defined in Remark~\ref{CCs}.

The connection between this map   and Diagram \eqref{sigma} is achieved using results of \cite{thuillier}, which enable us to construct a retraction (or projection)  from the analytification of $\Mgb$ to the extended skeleton of $\Mgbst$. This remarkable retraction map is denoted by ``${\operatorname{p}}$'' in the commutative diagram below.

Finally, let us introduce 
the two forgetful maps from $\Mgb$ and $\Mgtb$ to   the set $S_g$ of stable graphs of genus $g$
$$
\gamma_g:\Mgb \la S_g, \quad \quad  \quad \gamma_g^\trop:\Mgtb \la S_g
$$
mapping a stable, respectively   tropical,  curve to its dual,
 respectively  underlying,  weighted graph.

Now, 
using the  above notation, 
 we summarize  the previous discussion with the following statement.
\begin{thm}
The following  diagram   is commutative. 
 $$\xymatrix@=.5pc{
   &&&\Mgban \ar [rrrr]^{\operatorname{p}}  \ar[dddrrrr]_{\trop} \ar[ddd]_{\operatorname{red}}&&&&{\ov \Sigma}(\Mgbst)  \ar[ddd] ^{\varphi}_{\cong}&&\\
&&&&&   && \\
&&&&&   && \\
   &&&\Mgb\ar [rrd]_{\gamma_g}  &&&&\Mgtb\ar [lld]^{\gamma_g^\trop}   &&&&  \\
 &&&  &&S_g&&&& && }$$
\end{thm}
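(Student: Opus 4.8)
The diagram has three faces to check: the triangle $\gamma_g^\trop\circ\trop=\gamma_g\circ\operatorname{red}$ at the bottom, the triangle $\varphi\circ\operatorname{p}=\trop$ on the right, and then the commutativity is assembled from these. Since all four spaces $\Mgban$, $\Mgb$, $\Mgtb$, ${\ov\Sigma}(\Mgbst)$ carry natural topologies and the maps are continuous, and since (by Theorem~\ref{srt} and the description \eqref{Mgban}) the $K$-points with $\cC$ admitting a stable model over the valuation ring of $K$ are dense in $\Mgban$, it suffices to check commutativity on such points; everything else follows by continuity and density.

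\textbf{Bottom triangle.} Take a point of $\Mgban$ represented by a stable curve $\cC$ over a non-Archimedean field $K$, which after a finite extension (using Theorem~\ref{srt}) we may assume has a stable model $\cC'_{R'}$ over the valuation ring $R'$, with special fiber the stable curve $\cC_k$ of Remark~\ref{Cs}. By definition, $\operatorname{red}$ sends this point to the $k$-point of $\Mgb$ classifying $\cC_k$, and then $\gamma_g$ sends that to the dual graph $(G_{\cC_k},w_{\cC_k})$. On the other side, Proposition~\ref{corrK}(1) says precisely that $\trop(\cC)=\Gamma_\cC$ has underlying weighted graph $(G_\cC,w_\cC)=(G_{\cC_k},w_{\cC_k})$, and $\gamma_g^\trop$ forgets the length function, returning the same graph. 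So the bottom triangle commutes on a dense set, hence everywhere. (One should note the independence of the chosen finite extension: Corollary after Proposition~\ref{corrK} and Remark~\ref{Cs} give this, so the construction descends to $\Mgban$.)

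\textbf{Right triangle.} Here the content is essentially imported. By \cite{thuillier}, the toroidal structure on $\Mgbst$ yields a canonical retraction $\operatorname{p}:\Mgban\to{\ov\Sigma}(\Mgbst)$ onto the extended skeleton; by \cite[Thm.~1.2.1]{ACP}, recorded in Diagram~\eqref{sigma}, there is an isomorphism $\varphi:{\ov\Sigma}(\Mgbst)\xrightarrow{\ \cong\ }\Mgtb$ of extended generalized cone complexes. What must be checked is that $\varphi\circ\operatorname{p}$ agrees with the tropicalization map $\trop$ built from the local maps $\trop_K$ of \eqref{tropl}. Unwinding definitions: a point of $\Mgban$ given by $\cC/K$ with stable model over $R'$ determines a map $\Spec R'\to\Mgbst$, i.e.\ a point of the toroidal scheme, and $\operatorname{p}$ reads off the corresponding point of the cone complex, whose coordinates are exactly the valuations $v_{K'}(f_e)$ of the smoothing parameters of the nodes of $\cC_k$ — which is how $\ell_\cC(e)$ was defined in the proof of Proposition~\ref{corrK}. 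Under the identification $\varphi$ of cones $\Sigma(\Mgbst)$ with the cones $\RR^{|E|}_{\ge0}/\Aut(G,w)$ parametrizing $\Mt{(G,w)}$, these coordinates become precisely the data $(G_{\cC_k},w_{\cC_k},\ell_\cC)=\Gamma_\cC=\trop_K(\cC)$. Hence $\varphi\circ\operatorname{p}=\trop$ on the dense set of such points, so everywhere.

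\textbf{Conclusion and the main obstacle.} Combining the two triangles with the left square — which is the definition of $\operatorname{red}$ factoring through $\mu_K$ as in \eqref{muK} and \eqref{muKK}, together with the bottom edge $\Mgtb\xleftarrow{\varphi}{\ov\Sigma}(\Mgbst)$ matching the inclusion $\Mgb\hookrightarrow$ its points compatibly — gives the commutativity of the whole diagram. The genuine obstacle is the right triangle: it requires matching the \emph{abstract} retraction $\operatorname{p}$ of \cite{thuillier}, defined via initial degenerations in the toroidal structure of $\Mgbst$, with the \emph{concrete} recipe ``take smoothing parameters of the nodes, apply $v_{K'}$'' used to define $\ell_\cC$. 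This is exactly the coordinate compatibility worked out in \cite{ACP}; a self-contained proof would need the explicit description of the toroidal charts of $\Mgbst$ at a boundary point (étale-locally a product of nodal smoothing parameters $xy=t_e$) and the fact that Thuillier's retraction in such a chart sends a semistable point to the vector of valuations of the $t_e$. Everything else — density, continuity, the bottom triangle, base-change independence — is routine given the results already stated in the excerpt.
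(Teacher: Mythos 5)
Your proposal is correct and follows essentially the same route as the paper: the bottom triangle is immediate from the definitions of $\operatorname{red}$ and $\trop_K$ together with Proposition~\ref{corrK}(1), and the genuinely non-trivial compatibility $\varphi\circ\operatorname{p}=\trop$ is deferred to \cite[Thm.~1.2.1]{ACP} (built on \cite{thuillier}), exactly as the paper does. The only difference is cosmetic: your density-and-continuity reduction is superfluous, since every point of $\Mgban$ is already represented, up to the equivalence $\sim_{\an}$ and a finite extension, by a curve with a stable model, so the check is pointwise on all of $\Mgban$.
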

Notice  that all the arrows in the diagram are surjective.

With respect to the previous subsections, the   new and non-trivial part of the diagram is its upper-right corner,   a special case of \cite[Thm.1.2.1]{ACP}, to which we refer for a proof.

We   need to  define the ``reduction" map,  ${\operatorname{red}}:\Mgban\la \Mgb$.
Using the set-up of the previous subsection, this  map sends a point in $\Mgban$ represented by a stable curve $\cC$ over the   field $K$ to the stable curve $\cC_k$, defined  over $k$, of Remark~\ref{Cs}.
The commutativity of the diagram follows immediately from the earlier discussions.

    \section{Curves and their Jacobians}
    \label{CurvesSc}
  Our main focus in this paper is the role of tropical and graph-theoretical methods in the theory of algebraic curves and their moduli. As is well known, the geometry of algebraic curves is closely  connected to the geometry of their Jacobian varieties,
 to study which    combinatorial  methods
  have been used for  a long time. This will be the topic of the present section.
   
     \subsection{Jacobians of smooth curves and their moduli}
To any smooth connected projective curve $C$ of genus $g$  defined over $k$ one associates its  Jacobian,  $\Jac(C)$,
an abelian variety   of dimension $g$ endowed with a canonical principal polarization, the theta divisor, $\Theta(C)$.
The pair $(\Jac(C), \Theta(C))$ is a so-called {\it principally polarized abelian variety}, and will be denoted
\begin{equation}
 \label{jacCT}
 {\bf{Jac}}(C):=(\Jac(C), \Theta(C)) 
\end{equation}

The structure of  ${\bf{Jac}}(C)$      is extremely rich and    a powerful instrument to    study the curve $C$; it   has been investigated  in depth
for a long time and from different points of view: 
algebro-geometric, arithmetic and  analytic. 

There exist  various  ways of describing the Jacobian,  one of which is
\begin{equation}
 \label{jacC}
  \Jac(C)=\Pic^0(C),
\end{equation}
that is, $  \Jac(C)$ is the moduli space of line bundles (equivalently, divisor classes) of degree $0$ on $C$.
Now, we describe the Theta divisor   in a way that will be useful later.
First, we identify 
$$
\Pic^0(C) \cong \Pic^{g-1}(C)
$$
(even though the isomorphism, mapping $L\in \Pic^0(C) $ to $L\otimes L_0$ for some  fixed $L_0\in \Pic^{g-1}(C)$,
 is not canonical),   then we identify $\Theta(C)$ as follows
\begin{equation}
 \label{theta}
\Theta(C)  =\{L\in \Pic^{g-1}(C):\  h^0(C,L)\geq 1\}.
\end{equation}
It is well known that $\Theta(C)$ is an irreducible, codimension-one closed subvariety of $\Pic^{g-1}(C)$, and an ample principal divisor.

If the base field  $k$ is   $\C$, then   $\Jac(C)$ is identified with a $g$-dimensional  torus, as follows
\begin{equation}
 \label{jacCC}
   \Jac(C) = H^1(C, \cO_C)/H^1(C, \Z)\cong \C^g/\Z^{2g}.
\end{equation}
  Using the notation \eqref{jacCT} we recall the  following  famous      Torelli Theorem.

\begin{thm}Let $C_1$ and $C_2$ be two smooth curves; then
  $C_1\cong C_2$ if and only if $ {\bf{Jac}}(C_1)\cong {\bf{Jac}}(C_2).$
\end{thm}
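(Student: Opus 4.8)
The plan is to prove the two implications separately. The forward direction ($C_1 \cong C_2 \Rightarrow \mathbf{Jac}(C_1) \cong \mathbf{Jac}(C_2)$) is formal: an isomorphism $f : C_1 \to C_2$ induces by pullback an isomorphism $f^* : \Pic^0(C_2) \to \Pic^0(C_1)$ of abelian varieties, and since $f$ carries effective divisors of degree $g-1$ to effective divisors of degree $g-1$ and preserves $h^0$, the induced map on $\Pic^{g-1}$ carries $\Theta(C_2)$ to $\Theta(C_1)$; hence the principally polarized abelian varieties are isomorphic. This uses only the description \eqref{theta} of the theta divisor in terms of $h^0$.

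The substantive direction is the converse. Suppose $\phi : \mathbf{Jac}(C_1) \to \mathbf{Jac}(C_2)$ is an isomorphism of principally polarized abelian varieties, so $\phi$ identifies $\Jac(C_1)$ with $\Jac(C_2)$ and, after the (non-canonical) translation identifying $\Pic^0$ with $\Pic^{g-1}$, carries $\Theta(C_1)$ isomorphically onto a translate of $\Theta(C_2)$. The plan is to recover the curve from the pair $(\Jac(C), \Theta(C))$. First I would dispose of the hyperelliptic case separately, or alternatively invoke the Riemann singularity theorem: the multiplicity of $\Theta(C)$ at a point $L \in \Pic^{g-1}(C)$ equals $h^0(C,L)$. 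Then one studies the singular locus $\Sing \Theta(C)$. The classical approach (following Andreotti--Mayer, or the original argument via the Gauss map) is to consider the Gauss map $\gamma : \Theta(C)^{\mathrm{sm}} \to \PP(T_0 \Jac(C))^\vee = \PP^{g-1}$ sending a smooth point to its tangent hyperplane; for a non-hyperelliptic curve the closure of the image of $\gamma$ is the intersection of the canonically embedded curve's secant-related data, and more precisely one shows the branch locus of $\gamma$, or the dual of the image, recovers the canonical image of $C$ in $\PP^{g-1}$, hence $C$ itself. For hyperelliptic $C$ one argues directly that $\Theta(C)$ is singular exactly along a translate of $-W_{g-2}$ and reconstructs the $g^1_2$ and thus the $2g+2$ branch points.

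Concretely the key steps, in order, are: (1) record the trivial forward implication; (2) reduce the converse to reconstructing $C$ from $(\Jac C, \Theta C)$ up to isomorphism, noting translations do not matter; (3) use Riemann's singularity theorem to express $\operatorname{mult}_L \Theta(C) = h^0(C,L)$, in particular identifying $W^1_{g-1} := \{L : h^0(L) \geq 2\}$ with $\Sing \Theta(C)$ when $g \geq 4$; (4) introduce the Gauss map on $\Theta(C)^{\mathrm{sm}}$ and show its image (resp. ramification) determines the canonical curve $\Phi_{K_C}(C) \subset \PP^{g-1}$ in the non-hyperelliptic case; (5) handle the hyperelliptic (and low genus $g=2,3$) cases by an ad hoc reconstruction of the branch points; (6) conclude $C_1 \cong C_2$. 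The main obstacle is step (4)--(5): making the Gauss-map reconstruction precise requires the nontrivial input that for a non-hyperelliptic curve the canonical model is cut out in a way recoverable from the singularities and tangent cones of $\Theta$ (this is where the real content of Torelli's theorem sits), and the hyperelliptic case genuinely needs a separate argument since there $\Sing\Theta$ has the ``wrong'' dimension. Since this is a classical theorem I would either cite a standard reference (e.g.\ \cite{gac}) for steps (3)--(5) or present the Gauss-map argument in outline, emphasizing that the later sections of the paper will instead develop the tropical/graph-theoretic analogue where the combinatorial analogue of this reconstruction is carried out explicitly.
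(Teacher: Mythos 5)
The paper does not prove this statement at all: it is recalled as the classical Torelli theorem, stated as background with no argument (the survey's actual content is the graph-theoretic, tropical, and stable-curve analogues proved later). So there is no proof in the paper to compare yours against. Your outline is a correct sketch of one standard proof, namely Andreotti's argument: the easy implication by functoriality of $\Pic$ and of the $h^0$-description \eqref{theta} of $\Theta$, then the Riemann singularity theorem, the Gauss map $\gamma:\Theta^{\mathrm{sm}}\to\PP^{g-1}$ whose branch divisor is the dual of the canonical curve (recovering $C$ by biduality in the non-hyperelliptic case), and a separate reconstruction of the $2g+2$ branch points in the hyperelliptic case. Two minor points: the identification $\Sing\Theta(C)=W^1_{g-1}$ follows from Riemann's singularity theorem for every genus, not only $g\geq 4$ (for small $g$ or non-hyperelliptic $g=3$ this locus may simply be empty), and you are right that steps (4)--(5) carry the real content and are not spelled out in your sketch; delegating them to a standard reference such as \cite{gac} is exactly in the spirit of the paper, which cites the theorem rather than proving it.
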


What about the moduli theory of Jacobians? 

Since  Jacobians are    abelian varieties of a special type, it is   natural to approach this issue 
 within the moduli theory of principally polarized abelian varieties.
In fact, this theory has been developed to a  large extent in parallel to the moduli theory of smooth curves,
and the broader area has seen  an extraordinary development in the second half of the twentieth century.
We have already discussed the case of curves, let us briefly discuss the case of abelian varieties.

The moduli space of principally polarized abelian varieties of dimension $g\geq 2$ is denoted by
$ 
A_g.
$ 
It is an irreducible algebraic variety of dimension $g(g+1)/2$.  
The set of all Jacobians of curves is a distinguished   subspace  of   $A_g$, the so-called Schottky locus, written $Sch_g$.
The   Torelli theorem can   be re-phrased in this setting by saying that the following  map    

$$
\tau: M_g  \la A_g;\quad \quad C\mapsto {\bf{Jac }}(C)
$$
is an injection. In fact $\tau$, the {\it Torelli map}, is a morphism of algebraic varieties whose
  image   is, of course,  the Schottky locus.

 Just like $M_g$, the space $Sch_g$ is not projective and one is interested in 
 constructing   useful compactifications of it. Now,   $A_g$    is not projective either.
So, one could  compactify $A_g$ first, and then study the closure of $Sch_g$; this can be done,
as we shall see in Subsection~\ref{CJsec}.
On the other hand, the problem of completing just the space of all Jacobians is somewhat special and can  be dealt with independently, as we are going to explain.
 
 A natural approach  is to start  from the compactification,  $\Mgb$,
 of $M_g$ by stable curves, and try to describe,  in terms of
 stable curves, the  points to add to complete $Sch_g$.

  With this in mind, the first object to   study is the (generalized)  Jacobian  of a stable curve. 
  
    \subsection{Jacobians of nodal curves}
    
Let $X$ be a nodal curve defined over $k$
and $G_X=(V,E)$ its dual graph. Its (generalized) Jacobian, $\Jac(X)$, is defined   extending  \eqref{jacC}, as the group of line bundles on $X$ having degree $0$ on every irreducible component of $X$.
It is well known  that $\Jac(X)$ is a connected algebraic variety  and a commutative algebraic group,
but it is not  projective, i.e. it is not an abelian variety,  with   the exception of a few cases to be described in Remark~\ref{ct}. 

More precisely, we are going to show that  $\Jac(X)$ is a {\it semi-abelian} variety, i.e. it fits into an exact sequence of type  
\begin{equation}
 \label{semi}
0 \la T\la \Jac(X) \la A \la 0
\end{equation}
where $A$ is an abelian variety  and $T\cong (k^*)^b$ with $b\in \Z_{\geq 0}$.
The above sequence for our   curve $X$   can be recovered as follows.
Recall that   the normalization of $X$ is written  $X^{\nu}=\sqcup _{v\in V} C_v^{\nu}$, and the normalization map is
$ 
\nu:X^{\nu} \to X.
$  
The sequence   \eqref{smap} can be restricted to the subsheaves of units,
$$
0\la \cO^*_X\la  \nu_*\cO^*_{X^{\nu}}\la \cS^*  \la 0 
$$
yielding  
 the  following cohomology sequence, where we identify  $H^1(... ,\cO^*_{...})$ with $\Pic(...)$,
$$ 
0\to H^0(X,\cO^*_X)\to H^0(X^{\nu},\cO^*_{X^{\nu}}) \to\ (k^*)^{|E|}\to
\Pic(X) \stackrel{\nu^*}{\la}\Pic ( X^{\nu})\to 0.
$$
Now,  a line bundle $L\in \Pic(X)$ (or in $\Pic(X^{\nu})$) has a {\it multidegree} $\md \in \Z^V$ such that $d_v=\deg_{C_v}L$ 
for all $v\in V$. Therefore
 in the above sequence, the two Picard schemes decompose into connected components, one for every multidegree,
 as follows (we do that only for $X$, the case of  $X^{\nu}$ is similar) 
 $$
 \Pic(X)=\sqcup_{\md\in  \Z^V}\Pic^{\md}(X)
 $$
 where $\Pic^{\md}(X)$ denotes the subscheme of line bundles of multidegree $\md$.

The  Jacobian varieties correspond to the multidegree $(0,\ldots, 0)$,
therefore by restricting the sequence to the Jacobians   we get
$$
0\to H^0(X,\cO^*_X)\to H^0(X^{\nu},\cO^*_{X^{\nu}}) \stackrel{\hat{\delta}}{\la}(k^*)^{|E|}\to
\Jac(X) \stackrel{\nu^*}{\la}\Jac( X^{\nu})\to 0.
$$
With the notation of the sequence \eqref{semi}, we set
$$
A=\Jac(X^{\nu})= \Pi_{v\in V} \Jac(C_v^{\nu})
$$
and we have identified the right   side of   Sequence \eqref{semi}. Now,  using what we proved in Subsection~\ref{gtp}, we see
that the map $\hat{\delta}$ in the cohomology sequence above can be   identified with the   coboundary map on the graph $G_X$, and we have the following exact sequence 
$$
0\la  H_1(G_X, k^*) \la \Jac(X) \la \Jac (X^{\nu}) \la 0.
$$

In conclusion,   we have an identification 
$$
T=\Ker  \ \nu ^*  = H_1(G_X, k^*) = (k^*)^{b_1(G_X)}
$$
completing the explicit description of \eqref{semi}.
 \begin{remark}
 \label{ct}
From the above discussion we easily derive   that $\Jac(X)$ is projective (i.e. an abelian variety) if and only if 
$b_1(G_X) =0$, i.e. if and only if $G_X$ is a tree (the weights on the vertices play no role). If this is the case  $X$ is said to be a curve of {\it compact type}.
\end{remark}
 
\subsection{Models for Jacobians over a DVR}
  We have described Jacobians for a fixed curve, now we want
to consider families of curves and their corresponding families of Jacobians, and   concentrate on   the fact that
such families are seldom complete.  

Let  $R$ be a  discrete valuation ring whose fraction field is written $K$. In this subsection we can drop the assumption that $K$ is complete. 
Now let
$$
\phi:\cC_R\to \Spec R
$$ be a family of  curves  with smooth  generic fiber;
denote
  by $\cC_k$ the  special fiber of $\phi$ and   assume    $\cC_k$
 is a nodal curve over $k$.  We  write
$ 
 \cC_k \cong X
$ 
to tie in with the notation in the previous subsection, which we continue to use.
We   then consider the associated relative Jacobian, which is a morphism
$$
\cJ_R\la \Spec R
$$
such that the generic fiber, written $\cJ_K$, is the Jacobian of the generic fiber of $\phi$,  and the special fiber, $\cJ_k$, is  the Jacobian of $X$.
The above morphism is smooth and separated  but, as   explained in Remark~\ref{ct},  is not proper 
 unless $X$ is a curve of compact type.  
Furthermore, it is   ``too small" from the point of view of the moduli theory of line bundles.
More precisely, recall that the Jacobian of a smooth curve is the moduli space for line bundles of degreee $0$ on the curve, i.e. 
subvarieties of the Jacobian are in bijective correspondence with families of line bundles  (of degree $0$) on the curve.
Now, this fails for $\cJ_R$, and  most families of degree $0$ line bundles on the  fibers of $\cC_R$
do not correspond to subvarieties of $\cJ_R$.

\begin{example}
 Suppose the special fiber, $\cC_k$, of our family has two irreducible components    and consider a line bundle $\cL$ on $\cC_R$ such that its restriction to $\cC_k$ has multidegree $(1,-1)$. Our $\cL$ gives a family of degree $0$ line bundles on the fibers of $\phi$ which does not correspond to any subvariety of $\cJ_R$, simply  because its bidegree on the special fiber is not $(0,0)$.
\end{example}

To remedy these problems one concentrates on  the generic fiber, $\cJ_K$, and asks whether it admits a model 
over $\Spec R$ with   better properties than $\cJ_R$.

The following commutative diagram summarizes the current state of our knowledge with respect to models  that are ``canonical",
i.e. independent of any  specific choice. All arrows represent morphisms over $R$, the three arrows on the right  are smooth morphisms.

In the diagram, referring to   \cite{neron}, \cite{raynaud}, and  \cite{BLR} for details, we have:

$\bullet$ $N(\cJ_K)\to \Spec R$ is the N\'eron model of $\cJ_K$; it is a group scheme, separated over $R$
but not complete, in general.

$\bullet$  $\Pic^0_\phi \to \Spec R$ is the relative degree-$0$ Picard scheme of the original family of curves, 
$\phi:\cC_R\to \Spec R$. Its special fiber is the group of all line bundles of degree $0$ on $X$. It is,   in general, neither
quasi-projective nor separated, but it is a moduli space for line bundles of degree $0$ on the fibers of $\phi$.

$\bullet$ The surjection $\Pic^0_\phi \to N(\cJ_K)$ can be described as the  largest separated quotient of  $\Pic^0_\phi$ over $\Spec R$. We notice in passing that it does admit    sections.

\begin{equation}
\xymatrix{
&&\cJ_R \ar@{^{(}->}[d]\ar@{ ->>}[rrd]&& \\
\cJ_K \  \ar@{^{(}->}[rr] \ar@{^{(}->}[rrd] \ar@{^{(}->}[rru]  &&N(\cJ_K)\ar@{ ->>}[rr] &&\Spec R \\
&&\Pic^0_\phi  \ar@{ ->>} [u] \ar@{ ->>}[rru]&&
}
\end{equation}

What about the special fibers of the schemes appearing in the  middle of the diagram?
We already know the special fiber of $\cJ_R$, of course; what about the other two?

  The special fiber of    $\Pic^0_\phi $ is a disjoint union of copies of $\cJ_k$, the Jacobian of $X$. To describe it explicitly,
for any  $\md \in \Z^V$
  consider  $\Pic^{\md}(X)$. We have non canonical isomorphisms
$$
\Pic^{\md}(X) \cong   \cJ_k.
$$
Now, the special fiber of the Picard scheme $\Pic^0_\phi\to \Spec R$  is
$$
\Pic^0(X)=\bigsqcup _{\md\in \Z^V: \sum_{v } d_v = 0}\Pic^{\md}(X).
$$
Hence
$\Pic^0(X)$ is an  infinite   union of copies of $\cJ_k$, unless    $X$ is irreducible in which case  $\Pic^0(X)$ is also irreducible.

   To describe the special fiber, $N_k$,  of the N\'eron model $N(\cJ_K)$, we shall make an additional    assumption,
 namely that the total space of the family of curves, $\cC_R$, is nonsingular (see Remark~\ref{Nns} below).
We have
 $$
N_k\cong \bigsqcup _{i\in \Phi_{G_X}} \cJ_k^{(i)},
$$
in other words,  $N_k$ is, again, a disjoint union of copies of the Jacobian of $X$. These copies are 
  indexed by   an interesting finite group, $\Phi_{G_X}$, which  has been  object of study   since N\'eron models were 
constructed in \cite{neron} and,   for Jacobians, in \cite{raynaud}. As the notation suggests, 
 $\Phi_{G_X}$ depends only on the dual graph of $X$. Indeed
 we have    
\begin{equation}
 \label{kt}
\Phi_X=\Phi_{G_X}\cong {\partial C_1(G_X , \Z)\over \partial \delta C_0(G_X, \Z)},
\end{equation}
where, for some orientation on the graph $G_X$ (the choice of which is irrelevant)
  ``$\delta$" and ``$\partial$"  denote the coboundary and boundary maps  defined in subsection ~\ref{gtp};
  see \cite{OS}.
 See \cite{raynaud}, \cite {BLR}, \cite{lorenzini}, \cite{BMS1} and  \cite{BMS2}   for more details on the group  $\Phi_X$.
  
\begin{remark}
 \label{Nns} Suppose the total space $\cC_R$ is singular. Then  its desingularization,  $\cC_R'\to \cC_R$,  is an isomorphism away from 
 a set, $F$, of   nodes of the special fiber, $X$,  and the preimage of every node in $F$ is a chain of rational curves. The new special fiber is thus a nodal curve, $X'$, whose dual graph is obtained by inserting some vertices in the edges of $G_X$ corresponding to $F$. One can show that if $F$ is not entirely  made of bridges of $G_X$, then the group $\Phi_{X'}$ is not   isomorphic to $\Phi_{X}$.
 
\end{remark}

\subsection{Compactified Jacobians and Torelli maps}
\label{CJsec}
In the previous section we described N\'eron models for Jacobians. Although they
have many good properties (they are canonical and they have a universal mapping property, see \cite{BLR}), their use is limited to certain situations. First of all, they are defined over one-dimensional bases
(in fact in the previous subsection we assumed $R$ was a discrete valuation ring). Secondly, they are not projective.

The   problem of
constructing projective models for Jacobians, or  compactifications of Jacobians for singular curves, 
   has been investigated for a long time, also before stable curves were introduced and $M_g$ compactified. 
 
Building upon the seminal work of Igusa, Mumford,  Altman-Kleiman and  Oda-Seshadri,  substantial progress was made in the field and, since the beginning of the twenty-first century, 
we have a good understanding of compactified Jacobians of stable curves and compactified moduli spaces for Abelian varieties.

Compactified Jacobians for families of stable curves were constructed in  \cite{Cthesis},  \cite{Simpson}, \cite{Ishida}.
Some of these  constructions have been compared to one another in \cite{AlexeevTorelli} and related to the compactification of the moduli space of abelian varieties constructed in \cite{Alexeev}.

Let us turn to the Torelli map $\tau$ defined earlier for smooth curves, and   study how to  extend it over the whole of $\Mgb$ in a geometrically meaningful way.  This problem is a classical one, and has been studied for a long time;  see \cite{NamikawaII} for example.  Let us present a solution   in the following diagram.
$$
 \xymatrix{  
 \Mgb \ar[r] ^{{  \overline{\tau}}}& \ \ {\overline{A_g}};      
 &[X] \ar@{{|-}->}[r]&[{\bf\overline{Jac}}(X)]= \bigr[ \Jac(X) \curvearrowright (\PX,\overline{\Theta(X)})\bigl]
&&&& \\
 M_g \ar@{^{(}->}[u]      \ar[r]^ {\tau}&  A_g\ar@{^{(}->}[u]     &&&&\\
 }
 $$
In order  to define the extension ${\overline{\tau}}$ we followed  \cite{AlexeevTorelli}  and  used    the main irreducible component, $\overline{A_g}$,  of the compactification   of the  moduli space of principally polarized abelian variety constructed in \cite{Alexeev};   this is   a moduli space for
so-called ``semi-abelic stable pairs"   (which we shall not define here). Clearly, the image of  $\overline{\tau}$ contains and compactifies the Schottky locus, $Sch_g$, defined earlier.
\begin{remark}
\label{redAK}
 For future use, in analogy with Remark~\ref{Cs}, we denote by $\cA_g(K)$ the set of principally polarized abelian varieties of dimension $g$ defined over the field  $K$, and  define the  {\it reduction map}
$$
\red_K^{A_g}: \cA_g(K) \la  {\overline{A_g}}. 
$$
The map $\red_K^{A_g}$  sends a principally polarized abelian variety $\cP$ defined over $K$ to the image of the special point of the 
map $\Spec R \to {\overline{A_g}} $, extending the moduli map  $\Spec K \to A_g$  associated to $\cP$.
\end{remark}

For a stable curve $X$,  let us describe its image via $\overline{\tau}$, written $[{\bf\overline{Jac}}(X)]$ in the above diagram.  This is    the isomorphism class of the 
pair $(\PX,\overline{\Theta(X)})$ acted upon by the group $\Jac(X)$. Here $\PX$  denotes  the  compactified Jacobian  constructed 
  in  \cite{Cthesis}, namely  the moduli space for  so-called ``balanced" line bundles of degree $g-1$ on semistable curves   stably equivalent to $X$.  The variety $\PX$  is   connected, reduced,  of pure dimension $g$.
 Next, $\overline{\Theta(X)}$, called again the {\it Theta divisor}, is defined 
in analogy with \eqref{theta} as the closure in $\PX$ of the locus of line bundles on $X$ having some non-zero section.
$\overline{\Theta(X)}$ is an ample   Cartier divisor; see   \cite{esteves} and    \cite{Ctheta}.
We shall describe ${\bf\overline{Jac}}(X)$  in more   details in subsections~\ref{CJsec} and \ref{torellisec}.

 In this set-up we   propose the Torelli problem for stable curves, as follows:
describe the fibers of the morphism $ \overline{\tau}$, and in particular   the loci where it is injective.
That $ \overline{\tau}$ could not possibly be  injective is well known; see \cite{Namikawa} for example. 
Let us focus    on this aspect. First of all,  we have
$$
 \overline{\tau}^{-1}(A_g)=\Mgb^{\operatorname{cpt}}
$$
where $\Mgb^{\operatorname{cpt}}$ is the locus in $\Mgb$ of curves of compact type. Indeed, we have seen in  Remark~\ref{ct}  that the generalized Jacobian of a curve of compact type is an abelian variety.
Now we have the following well known fact.
\begin{prop}
\label{ctprop}
 Let $X_1$ and $X_2$ be stable curves   of compact type.
If  $X_1^{\nu}\cong X_2^{\nu}$ then $ \overline{\tau}(X_1)=\overline{\tau}(X_2)$.
The converse holds if $X_1$ and $X_2$ have the same number of irreducible components. \end{prop}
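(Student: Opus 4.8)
The plan is to analyze $\overline{\tau}(X)$ for $X$ of compact type by decomposing both the compactified Jacobian and its theta divisor according to the tree structure of $G_X$. Write $X = \bigcup_{v\in V} C_v$ where the $C_v$ are the irreducible components, and recall from Remark~\ref{ct} that since $X$ has compact type, $G_X$ is a tree and $\Jac(X) \cong \Jac(X^\nu) = \prod_{v\in V}\Jac(C_v^\nu)$ is already an abelian variety. First I would argue that, because $G_X$ is a tree, gluing the $C_v$ along the nodes does not change $h^0$ of line bundles in a way that affects the theta construction: a line bundle of multidegree $\underline{d}$ on $X$ with $\sum_v d_v = g-1$ restricts to line bundles $L_v$ on $C_v^\nu$, and one checks (using the exact sequence \eqref{smap} and $b_1(G_X)=0$) that $h^0(X,L)\geq 1$ precisely when some combinatorial condition on the $(L_v, d_v)$ forces a section; the upshot is that $(\PX, \overline{\Theta(X)})$ is built, as a polarized variety with $\Jac(X)$-action, purely out of the pairs $(\Jac(C_v^\nu), \Theta(C_v^\nu))$ together with the combinatorial data of the tree $G_X$ — and in fact only out of the \emph{multiset} of the $(\Jac(C_v^\nu),\Theta(C_v^\nu))$, since a tree can be reassembled in many ways without affecting the resulting semi-abelic pair.

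For the first assertion, suppose $X_1^\nu \cong X_2^\nu$. Then $\prod_v \Jac(C_v^{1,\nu}) \cong \prod_v \Jac(C_v^{2,\nu})$ as principally polarized abelian varieties (this uses that a ppav decomposes uniquely into indecomposable factors, so an isomorphism of the products permutes the factors compatibly with the polarizations — the classical decomposition theorem for ppav's). I would then invoke the description of $\overline{\tau}(X)$ recalled just before the proposition, namely that $[{\bf\overline{Jac}}(X)]$ is the isomorphism class of $(\PX, \overline{\Theta(X)})$ with its $\Jac(X)$-action, and show this class depends only on that multiset of indecomposable polarized pieces. Concretely: the normalization $X^\nu$ determines $\Jac(X^\nu)$ with its product polarization, the compactification $\PX$ for a compact-type curve is the corresponding "stable semi-abelic pair" whose abelian part is $\Jac(X^\nu)$ and whose combinatorial/toric part is trivial (torus rank $b_1(G_X)=0$), and $\overline{\Theta(X)}$ is the product theta divisor; hence $X_1^\nu \cong X_2^\nu$ gives $\overline{\tau}(X_1) = \overline{\tau}(X_2)$.

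For the converse, assume $\overline{\tau}(X_1) = \overline{\tau}(X_2)$ and that $X_1, X_2$ have the same number of components. From the equality of the semi-abelic pairs I would first extract $\Jac(X_1^\nu) \cong \Jac(X_2^\nu)$ as ppav's (the abelian part of the pair is intrinsic). By the uniqueness of the decomposition of a ppav into indecomposable principally polarized factors, the multiset $\{(\Jac(C_v^{1,\nu}), \Theta(C_v^{1,\nu}))\}_v$ agrees, after a permutation, with $\{(\Jac(C_v^{2,\nu}), \Theta(C_v^{2,\nu}))\}_v$ — but here one must be careful: an indecomposable ppav factor could a priori be split differently among components, e.g. a genus-$2$ component versus two elliptic components, which is exactly why the hypothesis on the number of components is needed. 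With equal component counts, I would argue the refinement into exactly $|V|$ pieces is forced, so the pairs match componentwise; then the classical Torelli theorem (stated in the excerpt) applied to each smooth component gives $C_v^{1,\nu} \cong C_v^{2,\nu}$ after reindexing, hence $X_1^\nu \cong X_2^\nu$.

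\textbf{Main obstacle.} The delicate point is controlling how the indecomposable principally polarized factors of $\Jac(X^\nu)$ are distributed among the irreducible components — i.e., making precise why the equal-number-of-components hypothesis rescues the converse, and handling the edge case of elliptic (genus-$1$) or even genus-$0$ components, whose Jacobians are trivial or have no canonical polarization, so that a rational component contributes nothing to $\Jac(X^\nu)$ and an elliptic component contributes an indecomposable piece that could be confused with a factor coming from a higher-genus component. I expect the cleanest route is to count, on both sides, the number of components of each geometric genus using the equality of $\Jac(X_i^\nu)$ together with $|V(G_{X_1})| = |V(G_{X_2})|$, pinning down the number of rational components as the defect, and only then invoke Torelli on the positive-genus components.
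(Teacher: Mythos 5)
Your proposal is correct and takes essentially the same route as the paper: for a compact-type curve $\Jac(X)\cong\Jac(X^{\nu})=\prod_{g_v>0}\Jac(C_v^{\nu})$ and the theta divisor is $\bigcup_{g_v>0}\bigl(\Theta(C_v^{\nu})\times\prod_{w\neq v}\Jac(C_w^{\nu})\bigr)$, so $\overline{\tau}(X)$ depends only on $X^{\nu}$, while conversely the polarized Jacobian determines the positive-genus components via the smooth Torelli theorem and the equal-component-count hypothesis serves only to recover the number of rational components. Your worry about an indecomposable factor being split differently among components is actually vacuous: each $\mathbf{Jac}(C_v^{\nu})$ with $g_v>0$ is an indecomposable principally polarized abelian variety (its theta divisor is irreducible), so the unique decomposition into indecomposables matches the components automatically, exactly as the paper's terser converse implicitly uses.
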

\begin{proof}
 Let us begin by proving  the first part  in the simplest nontrivial case, of a curve $X=C_1\cup C_2$  with only two (necessarily smooth) components.
 Let $g_i$ be the genus of $C_i$, so that $g_i\geq 1$ by the stability assumption.
 
 As we said above, we can identify
 $ \overline{\tau}(X)$ with the image of $X$ via the classical Torelli map $\tau$.
As already shown, we have 
 $$
 \Jac(X)\cong \Jac(X^{\nu})= \Jac(C_1)\times \Jac(C_2).
 $$ 
It remains to look at the theta divisor. It is well known that, $\Theta(X)$  is the union of two irreducible components   as follows
  $$
 \Theta (X)\cong  \Bigr(\Theta (C_1)\times \Jac(C_2)\Bigl) \cup \Bigr(\Jac(C_1)  \times \Theta (C_2)\Bigl),
 $$
 hence  $\Theta (X)$ depends only on $X^{\nu}$, as claimed.
 
 Now, if $X$ has more than two components the proof is similar.
It suffices to observe that, writing as usual  $X^{\nu}=\cup_{v\in V}C_v$, the components $C_v$ are   smooth,  hence
$$ 
 \Jac(X)\cong \prod_{v\in V}\Jac(C_v)=\prod_{v:g_v>0}\Jac(C_v),
$$
where $g_v$ is the genus of $C_v$.
For the Theta divisor we have
$$
 \Theta (X)\cong \bigcup_{v:g_v>0}\Bigr(\Theta (C_v)  \times\prod_{w\in V\smallsetminus \{v\}}\Jac(C_w)  \Bigl)
$$
so  $ \overline{\tau}(X)$  only depends on the normalization of $X$.

For the converse, by what we observed above and by  the Torelli theorem for smooth curves,  the Jacobian of $X$  detects precisely    the components of positive genus   of $X$, which are therefore the same for  $X_1$ and $X_2$. 
Hence, as   $X_1$ and $X_2$ have the same number of components,   they also have
  the same number of components of   genus zero, so we are done.
\end{proof}

\subsection{Combinatorial stratification of the compactified Jacobian}
\label{CJsecc}
Let $X$ be a stable curve and $(G_X,w_X)$ its dual graph.
We are   going to describe an  interesting stratification of $\PX$ 
preserved   by the action of $\Jac(X)$.
The strata depend on certain sets of nodes of $X$,
to which we refer as the strata  {\it supports},
 so that each stratum is isomorphic to the Jacobian of the desingularization of $X$ at the corresponding support. 
As we are going to see, this stratification is purely combinatorial, governed by the dual graph.
 
First of all, for
  any graph $G=(E,V)$,  we define the     partially ordered set  of {\it supports},   $\SP_{G}$, as follows.
\begin{equation}
 \label{SP}
\SP_{G}:=\{S\subset E:\  G-S \text{ is free from bridges}\}, 
\end{equation}
where a {\it bridge}, or {\it separating edge},  is an edge not contained in any cycle (equivalently,  an edge whose removal disconnects the connected component  containing it); notice that $G-S$ needs not be connected.
  $\SP_{G}$ is partially ordered by inclusion: $S\geq S'$ if $S \subset S'$.

Now, for any $S\in \SP_{G_X}$ we denote by $X_S^{\nu}$ the curve obtained from $X$ by desingularizing the nodes in $S$. 
The  dual graph of $X_S^{\nu}$ is thus  $(G_X-S, w_X)$. On the curve $X_S^{\nu}$   we have a distinguished  finite  set, written  $\Sigma_S$, of so-called ``stable" multidegrees for line bundles of total degree $g(X_S^{\nu})-1$; we   omit the precise definition as it is irrelevant here.

We are ready to introduce the stratification of $\PX$:
 
\begin{equation}
 \label{PX}
 \PX=\bigsqcup_{\stackrel{S\in \SP_{G_X}}{\md \in \Sigma_S}} P^{\md}_S  
\end{equation}
and 
for every stratum $P^{\md}_S$   above we have a canonical isomorphism
$$
P^{\md}_S\cong \Pic^{\md}X_S^{\nu}.
$$
Now,  the Jacobian 
  $\Jac(X)$   parametrizes line bundles of degree zero on every component of $X$.
Its  action  on $\PX$ can be described, in view of the above isomorphisms,
   by  tensor product of line bundles pulled-back to the various normalizations, $X_S^{\nu}$.
   Therefore we have the following important fact.

\begin{remark}
\label{orbit}
 The strata $P^{\md}_S$ appearing in \eqref{PX} coincide with the orbits of the action of $\Jac(X)$.
\end{remark}

We denote by $\ST_X$ the set of all strata, i.e. $\Jac(X)$-orbits, appearing in \eqref{PX}.  Remark~\ref{orbit} 
implies that 
the  closure of any $P_S^{\md}\in \ST_X$
 is a union of strata in $\ST_X$. In particular, $\ST_X$ is partially ordered as follows:
$$
P_S^{\md}\geq P_{S'}^{\md'} \quad \text {if } \quad  P_{S'}^{\md'} \subset \overline{P_S^{\md}}.
$$
Now, it is a fact that the following map, associating to a stratum its support,
\begin{equation}
 \label{Supp}
\operatorname{Supp}:\ST_X\la \SP_{G_X}; \quad \quad P_S^{\md}\mapsto S
\end{equation}
 is surjective and order-preserving (it is a morphism of posets).
So, in order to understand the geometry of $\PX$, we turn   to   the combinatorial properties of $\SP_{G_X}$,
for which we need a detour into graph theory.
A pleasant surprise is awaiting us: our detour will naturally take us to study Jacobians of graphs and tropical curves,
and their Torelli problem.

   \section{Torelli theorems}
   \label{TorelliSc}
In this section we discuss   the Torelli problem for  graphs, tropical curves, and stable curves, by respecting   the time sequence  in which these   problems were solved,    using many of the same combinatorial techniques. We devote    Subsection~\ref{twsec} 
  to the Torelli problem for weighted graphs, which we prove as an easy   consequence of the Torelli theorem for tropical curves and which, to our knowledge, does not appear in the literature.

\subsection{Some algebro-geometric graph theory}

In  the early 90's, a line of research in graph theory   began establishing an analogue of the classical Riemann-Roch theory, viewing a finite graph as the analogue of a Riemann surface;  see \cite{biggs}, \cite{BdlHN}, \cite{BN}, for example.
So, a divisor theory for graphs was set-up and developed in that direction;  we will now give    just   a short overview.

The group of all divisors on a graph  $G=(V,E)$, denoted by $\Div(G)$, is defined as the free abelian group
on $V$. Hence $\Div(G)$ can be naturally identified 
  with $\Z^V$;  the degree of a divisor   is   the sum of its $V$-coordinates.
  
  In analogy with algebraic geometry one defines ``principal" divisors as divisors associated to ``functions". 
A   function on $G$ is  defined as a map
  $ 
  f:V\to \Z,
  $ 
hence every function  has the form
 $
f=\sum_{v\in V}c_vf_v
 $,
 where $c_v\in \Z$ and $f_v$ is the function such that  
  $ f_v(w)=\kappa_{v,w}
$ 
  for any $w\in V$. The set of all functions on $G$ is the free abelian group generated by $\{ f_v,\  \forall v\in V\}$.
 
 We can now define, for any  function $f$ as above, the {\it principal} divisor, $ \dv(f)$, associated to $f$.
  By what we said it suffices to define $ \dv(f_v)$ for every $v\in V$ and extend the definition by linearity. We set
  
  $$
 \dv(f_v)=- \val(v)+2\operatorname{loop}(v) +\sum _{w\in V\smallsetminus v} (v, w)w
  $$
  where  $\val(v)$ is the valency of $v$,  $\operatorname{loop}(v)$ is the number of loops based at $v$, and $(v, w)$ the number of edges between $v$ and $w$.  
  
\begin{example}
 Let  $f$ be a constant function, i.e. $f=c\sum_{v\in V}f_v$ for some $c\in \Z$.
 Then $\dv(f)=0$.
\end{example}
  See Example~\ref{Z3} for more examples.

Principal divisors have       degree   zero  and form a subgroup, written  ${{\operatorname{Prin}} ( G)}$, 
  of  $\Div^0(G)$, the group of divisors of degree $0$ on $G$.

Two  divisors  are linearly equivalent if their difference is in  $\Prin (G) $.
The quotient $ \Div^0(G)/ \Prin  (G)$
is a finite group called, not surprisingly with \eqref{jacC} in mind,  the {\it Jacobian group} of $G$. 

The Jacobian group turns 
out to be isomorphic to a finite group  we  encountered earlier, namely we have 
$$
 {{\Div^0(G)}\over{{\operatorname{Prin}}  (G)}}   \cong  \Phi_G. $$

The above isomorphism follows from \eqref{kt} together with the following simple observation  
 $$ 
 {{\Div^0(G)}\over{{\operatorname{Prin}}  (G)}}   \cong 
 {\partial C_1(G  , \Z)\over \partial \delta C_0(G , \Z)}.$$
Indeed, we can naturally identify $\Div(G)=C_0(G,\Z)$, and then we have $\partial  C_1(G  , \Z)\subset \Div^0(G)$;
now, as $G$ is connected, this containment is easily seen to be an equality.
Next, we have
$$
\partial \delta (v)=\partial \Bigr(\sum_{t(e)=v}e-\sum_{s(e)=v}e\Bigl) =  \val(v)-2\operatorname{loop}(v) -\sum_{w\in V\smallsetminus v}(v,w)w =-\dv(f_v)
$$
proving that  ${\operatorname{Prin}}  (G) $ is identified with $\partial \delta C_0(G , \Z)$.

Several   results have been obtained in this area, the   Riemann-Roch theorem among others; we refer to \cite {BN} for weightless graphs, and to \cite{AC} for the case of weighted graphs. 

By contrast, an analog of the Torelli theorem for graphs does not find its place in this setting.
Indeed, the    problem  is that the   group $\Phi_G$    is not
   a  good  tool to characterize graphs, as it is easy to find
 examples of different graphs having the same Jacobian group.

\begin{example}
\label{Z3}
Let us show that all  graphs in Figure 5   have   $\Z/3\Z$  as Jacobian group.

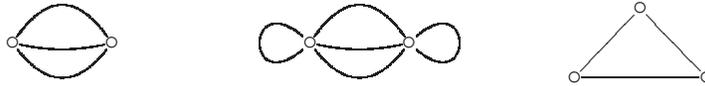
\begin{figure}[h]
\begin{equation*}
\xymatrix@=.5pc{
&&&&&&&&&&&&&&&&&&&&&*{\circ}\ar@{-}[ddrr]\ar@{-}[ddll]&&&&&\\
&&*{\circ}\ar@{-} @/^ 1.2pc/[rrr] \ar@{-} @/_.2pc/[rrr] \ar@{-} @/_1.1pc/[rrr]   &&&*{\circ}  
&&&&&&
  *{\circ}\ar@{-}@(ul,dl)
\ar@{-} @/^ 1.2pc/[rrr] \ar@{-} @/_.2pc/[rrr] \ar@{-} @/_1.1pc/[rrr]   &&&*{\circ}  \ar@{-}@(ur,dr)&&&&&& \\ 
&&&&&&&&&&&&&&&&&&&*{\circ}\ar@{-}[rrrr] &&&&*{\circ}  &&  \\
 & &&&&&}
\end{equation*}
\caption{Graphs with   isomorphic Jacobian group.}
\end{figure}
It is clear   that the definition of the Jacobian group does not take loops into account, hence the first two graphs have isomorphic Jacobian groups.
Now, that this group is   $\Z/3\Z$ is easily seen  as follows. Denote by $G$ the graph on the left, then
$$
\Div^0(G)\cong \{(n, -n),\ \forall n\in \Z\}  \quad  \quad \quad \Prin(G)\cong \{(3n, -3n),\  \forall n\in \Z\}; 
$$
indeed, if $v,w$ are the two vertices of $G$, then $\dv (f_v)=-3v+3w$ and $\dv (f_w)=- \dv (f_v)$.

Next, denote by $H$ the ``triangle" graph on the right. We have
$$
\Div^0(H)\cong \{(n, m,-n-m),\ \forall n,m\in \Z\}  \cong \Z\oplus \Z$$
and (recalling that $\dv(\sum_{v\in V}f_v)=0$)
$$
\Prin(H)\cong \langle \{(1, 1, -2), (1, -2,1)\}\rangle 
$$
(if $u,v,w$ are the three vertices of $H$, then $\dv (f_v)=-2v+ w+u$).
Hence
$$
\Phi_H \cong \frac{\Z\oplus \Z}{\langle (1,1)(1,-2)\rangle }\cong \Z/3\Z.
$$
\end{example}

\subsection{Torelli problem for   graphs}

We have seen another description of the Jacobian of an algebraic curve,
namely the ``classical" one given in \eqref{jacCC}.  This  description also has its graph theoretic analogue in the following 
 definition due to Kotani-Sunada, \cite{KS}.  For a weightless graph $G$, its  {\it Jacobian torus} is defined as follows
\begin{equation}
 \label{Jact}
{\bf{Jac}}(G):=\Bigr( H_1(G, \RR) / H_1(G, \Z) ; ( \; , \;)  \Bigl)=\Bigr({\RR^{g}/ \Lambda}; ( \; , \;)  \Bigl)
\end{equation}
where $( \; , \;)$ denotes the bilinear form defined on $C_1(G,\RR)$ (and hence on $ H_1(G, \RR)$) by linearly extending 
$$ (e,e'):=\kappa_{e,e'},\quad  \forall e,e'\in E.$$
\begin{remark}
In \cite{KS}  the notation ``$\Alb(G)$" is used instead of ${\bf{Jac}}(G)$, and the above space is called the {\it Albanese torus} of $G$, whereas the name ``Jacobian torus", and the notation   $\Jac(G)$, is used for something else.
In this paper however, for consistency with the terminology for tropical Jacobians we shall introduce later,
we use the notation ${\bf{Jac}}(G)$ as   in \eqref{Jact}.
\end{remark}
Now we can ask the   ``Torelli" question: is it true that  two graphs having  isomorphic Jacobian tori  are isomorphic?

One sees   easily that the answer is no. It is  in fact clear from the definition that the Jacobian torus,   defined in terms of the cycle spaces of the graph,  should not change if some bridge  of the graph gets contracted
to a vertex. 

Therefore one should   refine the question. 
First, for any graph $G$ we denote by $G^{(2)}$ the graph obtained from $G$ by contracting every bridge 
   to a vertex (in graph-theoretic terms,  $G^{(2)}$  is ``2-edge-connected", which explains the notation).
   Notice that $G^{(2)}$ is well defined and  has the same genus as $G$.

Now we ask:
      is it true that if two graphs $G_1$ and $G_2$ have isomorphic Jacobian tori, then  $G_1^{(2)}\cong G_2^{(2)}$?

 The answer is, again, no. The reason now is   more subtle: pick two non-isomorphic bridgeless graphs, $G_1$ and $G_2$.
Suppose, as   in Figure 6 below,  that there exists a bijection between $E(G_1)$ and $E(G_2)$ which induces an isomorphism between the cycle spaces $H_1(G_1, \Z)$ and $H_1(G_2, \Z)$. In this case one says that  $G_1$ and $G_2$ are {\it cyclically equivalent} and writes  $G_1\equiv_{cyc}G_2$.
 Then it is not hard to see that ${\bf{Jac}}(G_1)\cong {\bf{Jac}}(G_2)$.

 \begin{figure}[h]
\begin{equation*}
\xymatrix@=.5pc{
&&&&*{\circ} \ar@{-}[rrr]\ar@{-} @/_.5pc/[ddd]\ar@{-} @/^.5pc/[ddd]  &&&*{\circ}\ar@{-} @/_.5pc/[ddd]\ar@{-} @/^.5pc/[ddd]   
  &&&&&&&&&*{\circ}\ar@{-} @/_.5pc/[dddlll]\ar@{-} @/^.5pc/[dddlll]\ar@{-} @/^.5pc/[dddrrr]\ar@{-} @/_.5pc/[dddrrr]  &&&&\\
 &&&&&&&&&&&&&\\
  &&&&&&&&&&&\equiv_{cyc}&&&&\\
  && &&\ar@{-}@(ul,dl)*{\circ}\ar@{-}[rrr]&&&*{\circ}  &&&&&&*{\circ}\ar@{-}[rrr]&&&*{\circ}\ar@{-}[rrr]\ar@{-}@(ul,ur)&&&*{\circ}&&&&&}
\end{equation*}
\caption{Cyclically equivalent graphs.}
\end{figure}
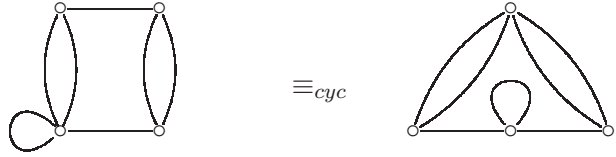

We mention    that the notion of cyclic equivalence we just defined, sometime called ``2-isomorphism",  has been given
by  Withney,
in \cite{whitney}, a constructive characterization   which is a helpful tool in proving the results we are presently describing.
   
We are ready to state an analogue of the Torelli theorem for graphs,   \cite[Thm 3.1.1] {CV1}.

\begin{thm}\label{torellig0} Let $G_1$ and $G_2$ be two  graphs. Then 
${\bf{Jac}}(G_1)  \cong {\bf{Jac}}(G_2)$ if and only if $ G_1^{(2)}\equiv_{\text{cyc}} G_2^{(2)}$.
 \end{thm}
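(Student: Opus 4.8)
The plan is to prove the two implications separately, with the hard work concentrated in the ``only if'' direction. For the ``if'' direction: suppose $G_1^{(2)} \equiv_{\text{cyc}} G_2^{(2)}$. First I would observe that contracting a bridge does not change $H_1(G,\Z)$ together with its bilinear pairing, because a bridge lies in no cycle and hence contributes nothing to the cycle space; thus $\mathbf{Jac}(G) \cong \mathbf{Jac}(G^{(2)})$ for every $G$. It then suffices to check that a cyclic equivalence $G_1^{(2)} \equiv_{\text{cyc}} G_2^{(2)}$ induces an isometry $\mathbf{Jac}(G_1^{(2)}) \cong \mathbf{Jac}(G_2^{(2)})$. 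By definition a cyclic equivalence is a bijection $E(G_1^{(2)}) \to E(G_2^{(2)})$ carrying $H_1(G_1^{(2)},\Z)$ isomorphically onto $H_1(G_2^{(2)},\Z)$ as sublattices of the respective edge-lattices $C_1(\cdot,\Z)$; since the pairing $(e,e') = \kappa_{e,e'}$ is the restriction of the standard pairing on the edge-lattice, the bijection on edges is an isometry of edge-lattices and hence restricts to an isometry of the cycle sublattices. Passing to $\otimes \RR$ and quotients gives the required isomorphism of polarized real tori, so $\mathbf{Jac}(G_1) \cong \mathbf{Jac}(G_2)$.

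For the ``only if'' direction, the key is to reconstruct the combinatorial datum $(E, H_1(G,\Z) \subset \ZZ^E)$ — up to the ambiguity of reordering and sign-changing the coordinates — from the polarized lattice $(H_1(G,\Z),(\,,\,))$ alone; once this is done, Whitney's constructive characterization of $2$-isomorphism recovers $G^{(2)}$ from that datum up to cyclic equivalence. Concretely, inside $\mathbf{Jac}(G)$ one has the lattice $\Lambda = H_1(G,\Z)$ with the pairing induced from the standard inner product on $\ZZ^E$; the edges of $G$ (equivalently, the standard basis vectors $e \in \ZZ^E$) project to elements of $\Lambda^\vee$, or dually one should look at the image of $\Lambda$ in $\ZZ^E$ and characterize the coordinate hyperplanes intrinsically. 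The mechanism — which is exactly what \cite{CV1} carries out — is to single out, among all vectors of $\Lambda$ or among all ``compatible'' systems of coordinates on $\Lambda \otimes \RR$, those that come from an actual graph: these correspond to realizing $\Lambda$ as the cycle lattice of a regular matroid that is moreover graphic. So the steps are: (i) extract from the polarized torus the cycle lattice $\Lambda \subset \ZZ^E$ up to signed permutation of $E$; (ii) recognize that this lattice, together with the induced matroid structure on $E$ (a set of edges is a circuit if its characteristic vector, with suitable signs, lies in $\Lambda$ and is minimal), is a graphic matroid, and that two bridgeless graphs have the same cycle matroid iff they are cyclically equivalent; (iii) invoke Whitney to conclude $G_1^{(2)} \equiv_{\text{cyc}} G_2^{(2)}$.

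The main obstacle is step (i): an isometry $\mathbf{Jac}(G_1) \cong \mathbf{Jac}(G_2)$ gives an isometry of the \emph{abstract} polarized lattices $(H_1(G_1,\Z),(\,,\,)) \cong (H_1(G_2,\Z),(\,,\,))$, but a priori it need not respect the chosen embeddings into the edge-lattices — it is only an isometry of quadratic forms. One must show that the edge-embedding is essentially intrinsic, i.e. that any isometry of cycle lattices of two bridgeless graphs is, up to composing with an automorphism, induced by a signed bijection of edge sets. This is precisely where positivity and the geometry of the short vectors (the circuits, which are the $(\pm1)$-vectors of minimal support in $\Lambda$) must be exploited: one characterizes the ``edges'' as the coordinate functionals and shows an isometry permutes them up to sign. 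I would prove this by the circuit/cocircuit analysis of \cite{CV1}, using that in a bridgeless (i.e. $2$-edge-connected) graph every edge lies in a circuit, so the cocircuit lattice is also recovered, and a theorem on regular matroids (unique representability of graphic matroids) pins down the graphic structure. The remaining bookkeeping — handling the sign ambiguities, verifying that Whitney's $3$-connected building blocks are genuinely unique, and checking the genus-matching — is routine once the lattice-theoretic core is in place.
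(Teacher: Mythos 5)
The paper does not actually prove Theorem~\ref{torellig0}: being a survey, it quotes the statement from \cite[Thm 3.1.1]{CV1} and only remarks that the ``if'' direction is easy, so there is no internal proof to compare yours against in detail. Your ``if'' argument is correct and is exactly the easy observation the paper alludes to: a bridge lies in no cycle, so contracting bridges leaves $(H_1(G,\Z),(\,,\,))$ unchanged, and a cyclic equivalence is by definition induced by an edge bijection, hence by an isometry of edge lattices which restricts to an isometry of the cycle lattices; passing to real tori gives ${\bf{Jac}}(G_1)\cong {\bf{Jac}}(G_2)$.

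For the ``only if'' direction your outline correctly isolates the crux: an isomorphism ${\bf{Jac}}(G_1)\cong{\bf{Jac}}(G_2)$ only gives an abstract isometry of the polarized lattices, and one must show it is (up to signs and automorphisms) induced by a bijection of edges, i.e.\ that for a bridgeless graph the embedding $H_1(G,\Z)\subset\Z^{E}$ --- equivalently the expression of the form as a sum of squares of the coordinate functionals --- is intrinsic; Whitney's theorem then converts the resulting matroid isomorphism into $G_1^{(2)}\equiv_{\text{cyc}}G_2^{(2)}$. However, as written, that key step (your step (i)) is not proved but delegated to ``the circuit/cocircuit analysis of \cite{CV1}'' and to unique representability of graphic/regular matroids, and that step is essentially the entire content of the theorem; so as a standalone blind proof there is a genuine gap at precisely the hard point. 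As a description of the strategy in the cited literature it is accurate in substance (recover the cycle matroid from the polarized lattice, then apply Whitney), though the published argument of Caporaso--Viviani extracts the matroid from the quadratic form via its Delaunay decomposition rather than directly via minimal-support short vectors --- a closely related but not identical mechanism, and one that also disposes of the sign ambiguities and the reduction to $3$-connected Whitney blocks that you flag as ``routine bookkeeping'' but which require care.
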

 
    As we mentioned,   the ``if" part of the above theorem is easy.   

\

What about weighted graphs? 
We shall see in Theorem~\ref{torelligw} that the above theorem can be extended.
For the moment, we limit ourselves to explain how to generalize the basic definitions.

Following  \cite{BMV},
  the definition of cyclic equivalence extends trivially   as follows: two weighted graphs, $(G,w)$ and $(G',w')$, having the same genus are {\it cyclically equivalent} if so are
  $G$ and $G'$. 
  
Furthermore,  for a weighted graph ${\bf G}=(G,w)$ we can define the graph ${\bf G^{(2)}}=(G^{(2)}, w^{(2)})$ obtained by contracting every bridge of $G$ and by defining the weight function $w^{(2)}$ accordingly 
(recall that by definition of weighted contraction, whenever a bridge $e$ gets contracted to a vertex $v$,  the weight of $v$ is the sum of the weights of the ends of $e$). Again,  ${\bf G^{(2)}}$ has the same genus as ${\bf G}$.

\begin{example}
 
In the picture   we have a stable graph  ${\bf G}=(G,w)$ of genus $9$ on the left, the corresponding graph ${\bf G^{(2)}}=(G^{(2)}, w^{(2)})$ in the middle,
and, on the right, a graph   cyclically equivalent, but not isomorpic, to ${\bf G^{(2)}}$.  
 \begin{figure}[h]
\begin{equation*}
\xymatrix@=.5pc{
&&&&&&& &*{\bullet}\ar@{-} @/^.3pc/[dll]\ar@{-}[dd]^<{1}^>{1}&&&&&&
&&*{\bullet}\ar@{-} @/^.3pc/[dll]\ar@{-}[dd]^<{1}^>{1}&&*{\bullet}\ar@{-} @/_.3pc/[drr]\ar@{-}[dd]_<{2}_>{1}&&&&
\\
{\bf G}=&&\ar@{-}@(ul,dl)*{\circ}\ar@{-} @/^ 1.2pc/[rr] \ar@{-} @/_.2pc/[rr] \ar@{-} @/_1.1pc/[rr]  &&*{\bullet}\ar@{-}[rr]_<{1}_>{2}&&*{\bullet}  
&&&  &{\bf G^{(2)}}=&&
\ar@{-}@(ul,dl)*{\circ}\ar@{-} @/^ 1.2pc/[rr] \ar@{-} @/_.2pc/[rr] \ar@{-} @/_1.1pc/[rr]_>{3}  &&*{\bullet}&&&\equiv_{cyc}&&&
*{\circ}\ar@{-}@(ul,ur)\ar@{-} @/^ .8pc/[rr] \ar@{-} @/_.2pc/[rr] \ar@{-} @/_1.1pc/[rr]_>{2}  &&*{\bullet}
   \\ 
&&&&&&& &*{\bullet}\ar@{-} @/_.3pc/[ull]&&&&&
&&&*{\bullet}\ar@{-} @/_.3pc/[ull]&&*{\bullet}\ar@{-} @/^.3pc/[urr]&&&&&&&&&&&&&&&&\\
  }
\end{equation*}
\caption{}
\end{figure}\end{example}

    \subsection{Torelli problem for tropical curves}
  The  results we described in the previous subsection  were obtained around the first decade of the twenty-first century,
during a time when  tropical geometry was flourishing (see   \cite{MikhalkinICM} and \cite{MS} for more on the subject).
In particular the   Jacobian of a pure tropical curve was introduced and studied in \cite{MZ}
  in close analogy to   the Jacobian torus of a graph we introduced earlier.

Having just settled the Torelli problem for graphs, the question on how to extend it to metric graphs, i.e.  to tropical curves,
arose as a very natural one. This is the topic of the present subsection. 
  
  Let us start with the main definition.
   Following \cite{MZ}, the Jacobian of a pure tropical curve, $\Gamma= (G, \ell)$, is defined as the following polarized torus

\begin{equation}
\label{defjact}
{\bf{Jac}}(\Gamma) :=\Biggr({H_1(G, \RR) \over H_1(G, \Z)}; ( \; , \;)_{\ell}  \Biggl)
\end{equation}
where $( \; , \;)_\ell$ is the bilinear form defined on $C_1(G,\RR)$  by linearly extending 
$$ (e,e')_\ell:=\kappa_{e,e'}\ell(e),\quad  \forall e,e'\in E.$$
For simplicity, we postpone the definition of the Jacobian for an arbitrary tropical curve, and argue as in the previous subsection
to arrive at the statement of the Torelli theorem for pure tropical curves.

Let  $\Gamma_1=(G_1,\ell_1)$ and $\Gamma_2=(G_2,\ell_2)$ be two pure tropical curves.
We say that   $\Gamma_1$ and $\Gamma_2$ are {\it cyclically equivalent}, and write 
 $\Gamma_1\equiv_{\text{cyc}} \Gamma_2$, if 
   there is a length preserving bijection between $E(G_1)$ and $E(G_2)$ which induces an isomorphism between $H_1(G_1, \Z)$ and $H_1(G_2, \Z)$. Of course if two curves are cyclically equivalent, 
 so are the underlying graphs.
 
 Next, for a tropical curve $\Gamma=(G,\ell)$ we denote by $\Gamma^{(2)}=(G^{(2)},\ell^{(2)})$ the curve whose underlining graph, $G^{(2)}$, is defined as in the previous subsection and whose length function $\ell^{(2)}$ coincides with $\ell$ (as we have a natural inclusion
$E(G^2)\subset E(G)$).

Reasoning as in the previous subsection, we see that if $\Gamma_1^{(2)}\equiv_{\text{cyc}} \Gamma_2^{(2)}$
then ${\bf{Jac}}(\Gamma_1)\cong {\bf{Jac}}(\Gamma_2)$. Does the converse hold?

The answer is no. Consider the two tropical curves $\Gamma$ and $\Gamma'$ in the picture below
(ignoring the weights on the vertices for the moment) and assume that the ten unmarked edges (five on each curve) have  the same length,  say equal to  1. Now, $\Gamma$ and $\Gamma'$ are clearly not cyclically equivalent, but they turn out to have isomorphic Jacobians.
 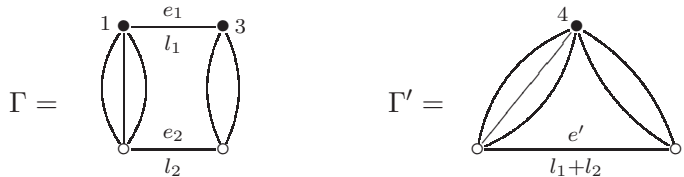
\begin{figure}[h]
\begin{equation*}
\xymatrix@=.5pc{
&&&&&&&&&&&&&&\\
&&&&*{\bullet} \ar@{-}[rrr]^{e_1}_{l_1}\ar@{-} @/_.7pc/[ddd]_<{1} \ar@{-} [ddd]\ar@{-} @/^.7pc/[ddd] &&&*{\bullet}\ar@{-} @/_.5pc/[ddd] \ar@{-} @/^.5pc/[ddd]^<{3}   
  &&&&&&&&&*{\bullet}\ar@{-}[dddlll]\ar@{-} @/_.7pc/[dddlll]_<{4} \ar@{-} @/^.7pc/[dddlll]\ar@{-} @/^.5pc/[dddrrr]\ar@{-} @/_.5pc/[dddrrr]  &&&&\\
 &&&&&&&&&&&&&\\
  &&\Gamma  =&&&&&&&&&&\Gamma' =&&&&\\
  && && *{\circ}\ar@{-}[rrr]^{e_2}_{l_2}&&&*{\circ}  &&&&&&*{\circ}\ar@{-}[rrrrrr]^{e'}_{l_1+l_2}&&&&&&*{\circ}&&&&&}
\end{equation*}
\caption{$\Gamma '=\Gamma^{(3)}$}
\end{figure}

In fact, observe that in $\Gamma$ the pair of edges $(e_1,e_2)$ has the property that
every cycle containing $e_1$ also contains $e_2$, and conversely. This property is equivalent to the fact that 
$(e_1,e_2)$ is a {\it separating pair} of edges, i.e. it disconnects $\Gamma$. Now,  there is
a bijection between the cycles in $\Gamma$ and those of $\Gamma'$ mapping bijectively cycles containing $e_1$ and $e_2$
 to cycles containing $e'$. Finally, and this is a key point, although this bijection 
does not preserve the number of edges in the cycles, it does
preserve  the total length of the cycles.

Let us turn this  example into a special case of something we have yet to define. For  a  tropical curve $\Gamma=(G,\ell)$ we introduce 
a curve $\Gamma^{(3)}=(G^{(3)},\ell^{(3)})$ defined as follows. First, we eliminate all the bridges
by   replacing $\Gamma$ with $ \Gamma^{(2)}$. Then for any  separating pair  of edges, $(e_1,e_2)$,
we contract one of them to a vertex  and set the length of the remaining edge equal to  
$\ell(e_1)+\ell(e_2)$. We repeat this process until   there are no separating pairs left;
the resulting curve is written $\Gamma^{(3)}=(G^{(3)},\ell^{(3)})$ (because the graph $G^{(3)}$ is
``3-edge connected").
Now,  the   curve  $\Gamma^{(3)}$   is not uniquely determined by $\Gamma$ but, as we will show in Lemma~\ref{Gamma3},  its cyclic equivalence class is.
 
For future purposes, we need   a definition.
 
\begin{defi}
\label{C1def}
Let $G=(E,V)$ be a  graph free from bridges.  
We define an equivalence relation on $E$ as follows: $e_1\sim e_2$ if $(e_1,e_2)$ is a separating pair,
i.e.  every cycle of $G$ containing $e_1$ contains $e_2$  and conversely.
The equivalence classes of this relation are called the {\it C1-sets} of $G$.
\end{defi}
It is clear that ``$\sim$"   is   an equivalence relation.

As we will explain   in Subsection~\ref{torellisec}, the terminology ``C1-set" is motivated by the corresponding algebro-geometric setting. 

The  C1-sets turn out to be elements of the poset $\SP_G$ defined earlier, in subsection ~\ref{CJsecc}.

 It is not hard to see that a
  bridgeless graph   is 3-edge connected if and only if  its C1-sets  have all cardinality $1$.
\begin{remark}
\label{C1rk}
Given a tropical curve $\Gamma=(G,w,\ell)$, the curve $\Gamma^{(3)}=(G^{(3)},w^{(3)},\ell^{(3)})$     can     be constructed as follows. For every C1-set of $G$, contract all but one of its edges.
Then set the length, $\ell^{(3)}(e)$, of the remaining edge, $e$,   equal to the sum of the lengths of all the edges in the C1-set.

The weight function $w^{(3)}$ is
  defined as for any weighted edge-contraction.
\end{remark}
\begin{example}
In Figure 7, the curve $\Gamma$ has only one C1-set with more than one element,
namely $\{e_1, e_2\}$. By contracting $e_1$ and defining  the length as in the picture
we get    $\Gamma^{(3)}=\Gamma'$.
\end{example}
In \cite[Thm 4.1.10]{CV1}  it is proved that two pure tropical curves $\Gamma_1$ and $\Gamma_2$ have isomorphic Jacobians if and only if $\Gamma_1^{(3)}$ and $\Gamma_2^{(3)}$ are cyclically equivalent.
This has been generalized to all tropical curves in \cite{BMV}. Before stating the theorem we need to 
 define, following \cite{BMV},   the Jacobian of a tropical curve $\Gamma= (G, w, \ell)$ of genus $g$, which  is   as follows
 
\begin{equation}
 \label{defjactw}
{\bf{Jac}}(\Gamma) :=\Biggr({H_1(G, \RR)\oplus \RR^{g-b_1(G)} \over H_1(G, \Z)\oplus \Z^{g-b_1(G)} }; ( \; , \;)_{\ell}  \Biggl)
\end{equation}
where $( \; , \;)_{\ell}$ is defined as in \eqref{defjact} on $H_1(G, \RR)$, and extended to $0$ on   $ \RR^{g-b_1(G)}$.
 
 Next, two tropical curves  $\Gamma_1=(G_1,w_1, \ell_1)$ and $\Gamma_2=(G_2,w_2, \ell_2)$
 are defined to be {\it cyclically equivalent} if so are the pure tropical curves $(G_1,\ell_1)$ and $(G_2,\ell_2)$.
 
 We can finally state  the tropical Torelli theorem, \cite[Thm 5.3.3]{BMV} (in different words).

\begin{thm}
\label{torellit} Let  $\Gamma_1$ and $\Gamma_2$ be  tropical curves. Then 
${\bf{Jac}}(\Gamma_1)\cong {\bf{Jac}}(\Gamma_2)$ if and only if $\Gamma_1^{(3)}\equiv_{\text{cyc}} \Gamma_2^{(3)}$.
 \end{thm}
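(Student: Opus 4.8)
The plan is to bootstrap from three ingredients already in hand: the Torelli theorem for weightless graphs (Theorem~\ref{torellig0}), the pure tropical Torelli theorem \cite[Thm.~4.1.10]{CV1}, and the well-definedness, up to cyclic equivalence, of $\Gamma^{(3)}$ (Lemma~\ref{Gamma3}). The only genuinely new work over the pure case is a reduction, which I would carry out first, after which the weighted statement drops out by bookkeeping.

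\textbf{Step 1: reduce to pure tropical curves.} Inspect the definition \eqref{defjactw}. The form $(\;,\;)_\ell$ on $H_1(G,\RR)\oplus\RR^{g-b_1(G)}$ is positive definite on the $H_1(G,\RR)$ summand, being the restriction of the positive-definite $\ell$-weighted Euclidean form on $C_1(G,\RR)$, and is identically zero on $\RR^{g-b_1(G)}$. Hence the radical of the polarization of ${\bf{Jac}}(\Gamma)$ is precisely the subtorus $\RR^{g-b_1(G)}/\Z^{g-b_1(G)}$, and the quotient of ${\bf{Jac}}(\Gamma)$ by this radical, equipped with the induced (now nondegenerate) form, is the pure Jacobian ${\bf{Jac}}(G,\ell)$ of \eqref{defjact}. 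Any isomorphism of polarized tropical tori preserves the dimension of the torus, preserves the radical subtorus, and descends to an isomorphism of the quotients by the radicals; conversely ${\bf{Jac}}(\Gamma)$ is recovered as the orthogonal direct sum of its pure part and a trivially polarized torus. Therefore ${\bf{Jac}}(\Gamma_1)\cong{\bf{Jac}}(\Gamma_2)$ if and only if $g(\Gamma_1)=g(\Gamma_2)$ and ${\bf{Jac}}(G_1,\ell_1)\cong{\bf{Jac}}(G_2,\ell_2)$, the equality $b_1(G_1)=b_1(G_2)$ then being automatic (it equals $g$ minus the radical dimension).

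\textbf{Step 2: the pure case.} Here I would invoke \cite[Thm.~4.1.10]{CV1}: two pure tropical curves have isomorphic Jacobians if and only if their $(3)$-reductions are cyclically equivalent. For expository completeness I would also recall the mechanism, which refines Theorem~\ref{torellig0}: passing from $\Gamma$ to $\Gamma^{(2)}$ destroys bridges without altering the Jacobian; the residual ambiguity is controlled by the C1-sets (Definition~\ref{C1def}), and the polarized torus only records the \emph{total} length of the edges in each C1-set — exactly the phenomenon of Figure~8 — so collapsing each C1-set to a single edge of that total length produces $\Gamma^{(3)}$. One then shows that $H_1(G^{(3)},\RR)/H_1(G^{(3)},\Z)$ with its length-weighted form reconstructs the C1-set--edges together with their lengths and the cycle lattice, up to Whitney's 2-isomorphism moves, which is precisely cyclic equivalence of $\Gamma^{(3)}$. (One route to this reconstruction is to treat rational lengths first, clearing denominators and subdividing each edge of integer length $n$ into a path of $n$ unit edges, so as to appeal to the weightless Theorem~\ref{torellig0}, and then pass to arbitrary real lengths by a continuity/density argument.)

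\textbf{Step 3: reassemble, and the main obstacle.} By Lemma~\ref{Gamma3} the cyclic equivalence class of $\Gamma_i^{(3)}$ is well-defined, and by definition $\Gamma_1^{(3)}\equiv_{\text{cyc}}\Gamma_2^{(3)}$ requires both that the two curves have the same genus — equivalently $g(\Gamma_1)=g(\Gamma_2)$, since the weighted contractions defining $\Gamma^{(3)}$ preserve the genus — and that their underlying pure tropical curves $(G_1,\ell_1)^{(3)}$ and $(G_2,\ell_2)^{(3)}$ be cyclically equivalent. Chaining Steps~1 and~2 then gives ${\bf{Jac}}(\Gamma_1)\cong{\bf{Jac}}(\Gamma_2)$ iff [$g(\Gamma_1)=g(\Gamma_2)$ and ${\bf{Jac}}(G_1,\ell_1)\cong{\bf{Jac}}(G_2,\ell_2)$] iff [$g(\Gamma_1)=g(\Gamma_2)$ and $(G_1,\ell_1)^{(3)}\equiv_{\text{cyc}}(G_2,\ell_2)^{(3)}$] iff $\Gamma_1^{(3)}\equiv_{\text{cyc}}\Gamma_2^{(3)}$, which is the assertion. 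The substantive difficulty lies entirely in the ``only if'' direction of Step~2 — extracting from an abstract polarized real torus the combinatorics of a $3$-edge-connected metric graph up to cyclic equivalence — and within it the two delicate points are that the Jacobian is blind to the internal subdivision of a C1-set (seeing only the total length) and that the weightless reconstruction of \cite{CV1} and Whitney must be upgraded to keep track of edge lengths; by contrast the pure-to-weighted reduction of Steps~1 and~3 is routine linear algebra and bookkeeping with the genus.
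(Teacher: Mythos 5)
Your proposal is correct in substance, but note that it does not parallel any argument in the paper: Theorem~\ref{torellit} is presented there purely as a citation of \cite[Thm 5.3.3]{BMV}, with no proof; the only thing proved in its vicinity is Lemma~\ref{Gamma3}. What you supply instead is a reduction of the weighted statement to the pure one of \cite[Thm 4.1.10]{CV1}: since $(\;,\;)_\ell$ is positive definite on $H_1(G,\RR)$ (all lengths are positive) and vanishes on $\RR^{g-b_1(G)}$, the radical of the polarization is intrinsic, any isomorphism of polarized tori preserves it together with its sublattice $\Z^{g-b_1(G)}$, hence induces an isomorphism of the nondegenerate quotients, which by \eqref{defjactw} are exactly the pure Jacobians; conversely the weighted Jacobian is by definition the orthogonal sum of its pure part and a trivially polarized torus, so the genus and the pure Jacobian determine it. Combined with Lemma~\ref{Gamma3} and the fact that the $(3)$-reduction commutes with forgetting weights, this is a legitimate and economical derivation --- structurally the same move the paper itself makes one level down, when it deduces the weighted-graph Theorem~\ref{torelligw} from Theorem~\ref{torellit}. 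What the paper's route buys is that the hard content is established once, in \cite{BMV}, for all weighted curves; what yours buys is a shorter citation chain: only the pure case of \cite{CV1} is needed, the weighted case being linear-algebra bookkeeping.

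Two small cautions. First, your chain needs cyclic equivalence of weighted tropical curves to include equality of genus; the paper states this explicitly only for weighted graphs and its one-line definition for tropical curves omits it, but it is clearly intended (without it the ``if'' direction of Theorem~\ref{torellit} fails for dimension reasons), so you are right to make it explicit. Second, the parenthetical sketch at the end of your Step 2 --- subdividing integer-length edges into unit edges so as to invoke Theorem~\ref{torellig0}, then passing to real lengths ``by continuity/density'' --- would not survive scrutiny as written: cyclic equivalence is a discrete condition and isomorphism of polarized tori does not obviously pass to limits. Since you actually invoke \cite[Thm 4.1.10]{CV1} for Step 2, this does not affect the correctness of the proposal, but that sketch should not be offered as a proof route.
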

    As a last comment on this theorem  we shall clarify  its statement  by proving that the
  cyclic equivalence
class of  $\Gamma^{(3)}$ is 
  uniquely determined by $\Gamma$.

\begin{lemma}
\label{Gamma3}
Let  $\Gamma=(G,w,\ell)$  be a   tropical curve. Then the cyclic equivalence class of    $\Gamma^{(3)}=(G^{(3)},w^{(3)},\ell^{(3)})$, defined in Remark~ \ref{C1rk}, is uniquely determined.
  \end{lemma}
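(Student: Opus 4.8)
The plan is to realize $\Gamma^{(3)}$ as a single weighted edge-contraction of $\Gamma$ along an edge set that contains no cycle, and then to show that any two legitimate choices in the recipe of Remark~\ref{C1rk} are related by a length-preserving bijection of edge sets inducing an isomorphism on integral first homology. Since cyclic equivalence of weighted tropical curves is defined through the underlying pure tropical curves, this is exactly what must be proved.

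First I would reduce to the bridgeless case. A bridge lies in no cycle, so contracting all bridges of $G$, in any order, is a well-defined weighted contraction $\Gamma\to\Gamma^{(2)}$ inducing a canonical identification $H_1(G,\Z)=H_1(G^{(2)},\Z)$ (bridges carry no $1$-cycle); hence I may assume $G=G^{(2)}$ is bridgeless. Then, by Definition~\ref{C1def}, the C1-sets of $G$ form a partition of $E(G)$ that is intrinsic to $G$, so the only freedom in building $\Gamma^{(3)}$ is the choice, for each C1-set $D$, of a single surviving edge $e_D\in D$. Fix such a choice and put $F:=\bigcup_D\bigl(D\setminus\{e_D\}\bigr)$. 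By construction $\Gamma^{(3)}$ is obtained from $\Gamma$ by contracting $F$, assigning to the surviving edge $e_D$ the length $\ell^{(3)}(e_D)=\sum_{e\in D}\ell(e)$, and letting $w^{(3)}$ be the induced weighted contraction of $w$. The crucial elementary observation is that $F$ contains no cycle of $G$: a cycle meeting $F$ meets some C1-set $D$, hence contains all of $D$ by the defining property of a C1-set, whereas $e_D\notin F$. Thus $F$ is a forest of non-loop edges and stays so throughout the contraction, so contracting $F$ induces a canonical isomorphism $H_1(G,\Z)\stackrel{\sim}{\la}H_1(G/F,\Z)$ carrying the class of any cycle $C$ of $G$ to the class of $C\setminus F$, and every simple cycle of $G/F$ arises as $C\setminus F$ for a simple cycle $C$ of $G$.

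Now let $F'=\bigcup_D\bigl(D\setminus\{e'_D\}\bigr)$ come from a second choice of survivors, and let $\phi\colon E(G/F)\to E(G/F')$ be the bijection that is the identity on $E(G)\setminus\bigcup_D D$ and sends $e_D\mapsto e'_D$ for every C1-set $D$. It is length-preserving, since $\ell^{(3)}(e_D)=\sum_{e\in D}\ell(e)=\ell^{(3)}(e'_D)$ and no other length changes. Using the two homology identifications, one checks that for any cycle $C$ of $G$ the class $[C\setminus F]$ is sent by $\phi$ to $[C\setminus F']$: for each $D$, either $C$ avoids $D$ entirely, or it contains all of $D$, in which case $C\setminus F$ and $C\setminus F'$ agree outside $D$ and carry $e_D$, resp.\ $e'_D$, on $D$. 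Since such classes generate $H_1(G/F,\Z)$, the map $\phi$ restricts to an isomorphism $H_1(G/F,\Z)\stackrel{\sim}{\la}H_1(G/F',\Z)$. Hence the pure tropical curves underlying the two candidates for $\Gamma^{(3)}$ are cyclically equivalent, and therefore so are the weighted curves, which is the assertion. (With more work, using that a C1-set $D$ with $|D|=k$ yields, upon deleting $D$, exactly $k$ connected components cyclically joined by the edges of $D$, one can show that even $w^{(3)}$ itself is independent of the choices, but this refinement is not needed here.)

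The step carrying real content — and where I would expect to spend the most care in a full write-up — is the interplay between the homologies of $G$ and of $G/F$: that contracting a forest of non-loop edges yields a canonical isomorphism on integral $H_1$ under which simple cycles correspond to simple cycles, and that the bridge step and all the C1-reductions are governed entirely by the canonical C1-partition of $G^{(2)}$. These are standard facts about the cycle matroid of a graph, but everything substantive in the proof is concentrated there; the remainder is bookkeeping.
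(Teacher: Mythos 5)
Your proof is correct and follows essentially the same route as the paper's: both arguments rest on the two facts that every cycle of $G$ is a union of whole C1-sets (so no cycle is collapsed and cycles correspond bijectively under the contraction) and that the surviving edge of each C1-set carries its total length, so the edges, lengths, and cycle space of $\Gamma^{(3)}$ are determined by data intrinsic to $\Gamma$. The difference is only presentational: you compare two choices of surviving edges via an explicit length-preserving edge bijection and an $H_1$-identification through forest contractions, while the paper packages the same content as a canonical bijection between the edges (resp.\ cycles) of $G^{(3)}$ and the C1-sets (resp.\ cycles) of $G$.
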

\begin{proof}
Fix $\Gamma^{(3)}$ as above.
Since cyclic equivalence does not take  weights into account, we are free to ignore $w$ and $w^{(3)}$.

We shall   prove that there is a canonical  length-determining bijection between the edges of $G^{(3)}$ and the C1-sets of $G$
which, moreover,  induces a canonical bijection between the cycles of $G^{(3)}$  and those of $G$. 
 By definition of cyclic equivalence  this will conclude the proof.
 
By  Remark~\ref{C1rk} every edge, $e$, of $G^{(3)}$ corresponds to a unique C1-set of $G$ and every C1-set of $G$ is obtained in this way.  Next,  $\ell^{(3)}(e)$
equals the cardinality of this C1-set. This gives a canonical
 length-determining bijection between   edges of $G^{(3)}$ and  C1-sets of $G$.
 
By Definition~\ref{C1def},
 if a cycle of $G$ contains a certain  edge, then it contains the entire C1-set containing it. In other words, every cycle     is partitioned into C1-sets.
  Hence
 in the contraction  
 $ 
 G\to G^{(3)}
 $ 
 no cycle gets collapsed to a vertex, and different cycles of $G$  go to different    cycles  of $G^{(3)}$ (C1-sets are disjoint). We thus get a canonical bijection between the cycles of $G$  and those of $G^{(3)}$. Now we are done.
 \end{proof}

\subsection{Torelli problem for weighted graphs}
\label{twsec}
 We are now ready to deal with the Torelli problem for weighted graphs.

Let ${\bf{G}}=(G,w)$ be a weighted graph of genus $g$. Let its Jacobian torus be defined as follows
$$
{\bf{Jac}}({\bf{G}}) :=\Biggr({H_1(G, \RR)\oplus \RR^{g-b_1(G)} \over H_1(G, \Z)\oplus \Z^{g-b_1(G)} }; ( \; , \;)   \Biggl)
$$
 where $( \; , \;)$ is defined as for \eqref{defjactw}.

We shall now derive the following theorem   as a consequence of  Theorem~\ref{torellit}.
It could probably be proved directly, but, as far as we know, this has not been done.

\begin{thm}\label{torelligw} Let ${\bf{G_1}}$ and ${\bf{G_2}}$ be two  stable weighted graphs of genus $g$. Then 
${\bf{Jac}}({\bf{G_1}})  \cong {\bf{Jac}}({\bf{G_2}})$ if and only if ${\bf{G_1}}^{(2)}\equiv_{\text{cyc}} {\bf{G_2}}^{(2)}$.
 \end{thm}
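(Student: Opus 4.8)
The strategy is to derive the statement from the tropical Torelli theorem, Theorem~\ref{torellit}. To a stable weighted graph $\mathbf G=(G,w)$ of genus $g$ associate the tropical curve $\Gamma_{\mathbf G}:=(G,w,\underline 1)$ whose length function is constantly equal to $1$. Comparing the formula for $\mathbf{Jac}(\mathbf G)$ in the statement with \eqref{defjactw}, and observing that $(e,e')_{\underline 1}=\kappa_{e,e'}\cdot 1=\kappa_{e,e'}$, we see that $\mathbf{Jac}(\mathbf G)=\mathbf{Jac}(\Gamma_{\mathbf G})$ as polarized tori. Hence, by Theorem~\ref{torellit}, $\mathbf{Jac}(\mathbf{G_1})\cong \mathbf{Jac}(\mathbf{G_2})$ if and only if $\Gamma_{\mathbf{G_1}}^{(3)}\equiv_{\text{cyc}}\Gamma_{\mathbf{G_2}}^{(3)}$, and it remains to prove the purely combinatorial equivalence $\Gamma_{\mathbf{G_1}}^{(3)}\equiv_{\text{cyc}}\Gamma_{\mathbf{G_2}}^{(3)}$ if and only if $\mathbf{G_1}^{(2)}\equiv_{\text{cyc}}\mathbf{G_2}^{(2)}$. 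Since cyclic equivalence ignores the weights and $\Gamma_{\mathbf G}^{(3)}=(\Gamma_{\mathbf G}^{(2)})^{(3)}$, writing $H_i:=G_i^{(2)}$ (a bridgeless graph) both sides depend only on $H_1$ and $H_2$; moreover, by Remark~\ref{C1rk}, $\Gamma_{\mathbf G}^{(3)}$ has underlying graph $H^{(3)}$ and length function $\ell_H$ sending an edge to the cardinality of the corresponding C1-set of $H$. Thus we must show: for bridgeless graphs $H_1,H_2$ one has $(H_1^{(3)},\ell_{H_1})\equiv_{\text{cyc}}(H_2^{(3)},\ell_{H_2})$ if and only if $H_1\equiv_{\text{cyc}}H_2$.

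For the implication ``$\Leftarrow$'', a cyclic equivalence $\phi\colon E(H_1)\to E(H_2)$ induces an isomorphism of cycle spaces and therefore respects, for any edges $e,e'$, the condition ``every cycle through $e$ passes through $e'$''; since the C1-set relation of Definition~\ref{C1def} is precisely the conjunction of this condition with its symmetric, $\phi$ carries the C1-sets of $H_1$ bijectively onto those of $H_2$, preserving cardinalities. Via the bijection between C1-sets and edges of $H^{(3)}$ (Remark~\ref{C1rk}) this gives a length-preserving bijection $E(H_1^{(3)})\to E(H_2^{(3)})$, and, by the canonical cycle bijection established in the proof of Lemma~\ref{Gamma3} (every cycle is a disjoint union of C1-sets, so contracting each C1-set to one edge identifies the cycles of $H$ with those of $H^{(3)}$), this bijection induces an isomorphism $H_1(H_1^{(3)},\Z)\cong H_1(H_2^{(3)},\Z)$. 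Hence $(H_1^{(3)},\ell_{H_1})\equiv_{\text{cyc}}(H_2^{(3)},\ell_{H_2})$. We note that this also yields the ``if'' part of the theorem directly, without Theorem~\ref{torellit}: a cyclic equivalence $\mathbf{G_1}^{(2)}\equiv_{\text{cyc}}\mathbf{G_2}^{(2)}$ is in particular an edge bijection, hence automatically preserves the form $\kappa_{e,e'}$ and gives $\mathbf{Jac}(\mathbf{G_1}^{(2)})\cong \mathbf{Jac}(\mathbf{G_2}^{(2)})$; and $\mathbf{Jac}(\mathbf G)=\mathbf{Jac}(\mathbf G^{(2)})$ because contracting bridges affects neither $H_1$ of the graph (cycles use no bridges, so the form $\kappa_{e,e'}$ restricts correctly) nor $b_1$ nor $g$.

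For ``$\Rightarrow$'', let $\varphi\colon E(H_1^{(3)})\to E(H_2^{(3)})$ be a length-preserving bijection inducing an isomorphism of cycle spaces. Through the edge/C1-set correspondence, $\varphi$ determines a cardinality-preserving matching $S\mapsto S'=\varphi(S)$ between the C1-sets of $H_1$ and those of $H_2$. Choose, for each $S$, any bijection $\Phi_S\colon S\to S'$ and let $\Phi\colon E(H_1)\to E(H_2)$ be their union. For a cycle $c$ of $H_1$, its edge-support is a union of the C1-sets it meets (again by Definition~\ref{C1def}), say $S_{i_1}\cup\cdots\cup S_{i_m}$, and $\Phi$ sends it to $\varphi(S_{i_1})\cup\cdots\cup\varphi(S_{i_m})$; tracing $c$ through the cycle bijection of Lemma~\ref{Gamma3} for $H_1$, then $\varphi$, then the cycle bijection of Lemma~\ref{Gamma3} for $H_2$, one checks that this is exactly the edge-support of a cycle of $H_2$, and conversely every cycle of $H_2$ arises this way. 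Hence $\Phi$ matches the cycle matroids of $H_1$ and $H_2$ (choosing orientations along each C1-set compatibly, so that $\Phi$ carries the chain-sum of $S$ to that of $S'$, upgrades this to the isomorphism $H_1(H_1,\Z)\cong H_1(H_2,\Z)$ obtained by conjugating $\varphi_*$ with the two identifications of Lemma~\ref{Gamma3}); in other words $\Phi$ is a $2$-isomorphism, i.e. $H_1\equiv_{\text{cyc}}H_2$, which is $\mathbf{G_1}^{(2)}\equiv_{\text{cyc}}\mathbf{G_2}^{(2)}$. This completes the combinatorial equivalence, hence the theorem.

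The only genuinely delicate point is the direction ``$\Rightarrow$'': one must transport the cyclic equivalence of the $3$-edge-connected curves $H_i^{(3)}$, which lives at the level of C1-sets, back to an honest edge bijection of the $H_i$. The substance is that a bridgeless graph is recovered, up to cyclic equivalence, from its $3$-edge-connected core $H^{(3)}$ together with the cardinalities of the C1-sets (recorded in $\ell_H$), the cycle structure being carried along by Lemma~\ref{Gamma3}; the mild bookkeeping of orientations inside each C1-set is the only thing requiring care. Everything else — the identification $\mathbf{Jac}(\mathbf G)=\mathbf{Jac}(\Gamma_{\mathbf G})$ and the reduction through Theorem~\ref{torellit} — is immediate.
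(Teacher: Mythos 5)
Your proposal is correct and follows the same route as the paper's proof: associate to each $\bf{G_i}$ the unit-length tropical curve in $\Mt_g[1]$, identify ${\bf{Jac}}({\bf{G_i}})$ with ${\bf{Jac}}(\Gamma_i)$, invoke Theorem~\ref{torellit}, and use the dictionary of Remark~\ref{C1rk} (lengths on $\Gamma_i^{(3)}$ record C1-set cardinalities) to get a cardinality-preserving bijection between the C1-sets of $G_1^{(2)}$ and $G_2^{(2)}$. The one place you diverge is the final combinatorial step: the paper simply cites \cite[Prop.\ 2.3.9]{CV1} to pass from this C1-set data to $G_1^{(2)}\equiv_{\text{cyc}}G_2^{(2)}$, whereas you prove it directly by choosing arbitrary edge bijections within matched C1-sets and transporting circuits through the cycle bijection of Lemma~\ref{Gamma3}, which makes the argument self-contained (you also write out the ``if'' direction, which the paper dismisses as clear); the small debts you leave --- that a cyclic equivalence carries circuits to circuits (minimal supports of the cycle space), and that a circuit-preserving edge bijection can be upgraded, via Whitney's theorem and orientation choices, to an isomorphism of integral cycle spaces --- are standard and consistent with the level of detail in the paper, so I see no gap.
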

\begin{proof}
We    prove the ``only if" part (the other part is clear); so assume ${\bf{Jac}}({\bf{G_1}})  \cong {\bf{Jac}}({\bf{G_2}})$.
By the other direction, we can assume ${\bf{G_1}}$ and ${\bf{G_2}}$  are free from bridges.

We introduce the locus in $\Mt_g$ of tropical curves all of whose edges have length $1$:
$$
\Mt_g[1]:=\{[\Gamma] =[(G,w,\ell)]\in \Mt_G: \  \ell(e)=1, \  \forall e\in E\}.
$$
We identify it with the set of stable graphs of genus $g$, as follows  
$$
{\rm S}_g \stackrel{\cong}{\la}\Mt_g[1]; \quad \quad {\bf{G}} =(G,w)\mapsto  [(G,w , \underline{1})]
$$
where $ \underline{1}$ denotes the constant length function equal to $1$.
Denote by $\Gamma_1$ and $\Gamma_2$ the tropical curves in $\Mt_g[1]$ corresponding to  ${\bf{G_1}}$ and ${\bf{G_2}}$;
as ${\bf{Jac}}({\bf{G_1}})  \cong {\bf{Jac}}({\bf{G_2}})$  we have, of course, ${\bf{Jac}}(\Gamma_1)\cong {\bf{Jac}}(\Gamma_2)$, and hence $\Gamma_1^{(3)}\equiv_{\text{cyc}} \Gamma_2^{(3)}$,  by Theorem~\ref{torellit}.

Let $i=1,2$; as  $\Gamma_i$ has all edges of length $1$, all the edges of   $\Gamma_i^{(3)}$ have  integer length.
Recall from Remark~\ref{C1rk} that the edges of  $\Gamma_i^{(3)}$ are in bijection with the C1-sets of $G_i$,
 and, under this bijection, the C1-set corresponding to an edge, $e$, of $\Gamma_i^{(3)}$ has cardinality $\ell^3(e)$.

Since $\Gamma_1^{(3)}$ and $\Gamma_2^{(3)}$ are cyclically equivalent, 
there is a length preserving bijection between the edges of $\Gamma_1^{(3)}$ and $\Gamma_2^{(3)}$,
  hence we
 obtain   a cardinality preserving bijection between the C1-sets of $G_1$ and those of $G_2$. By \cite[Prop. 2.3.9]{CV1} we conclude 
that $G_1$ and $G_2$ are cyclically equivalent,  and hence so are  ${\bf{G_1}}$ and ${\bf{G_2}}$. Our statement is proved.
\end{proof}

\subsection{Torelli problem for stable curves}
\label{torellisec}

We now go back to the set up of  subsection~\ref{CJsec}, and study the fibers of the 
Torelli map $ { \overline{\tau}}: \Mgb \la  \overline{A_g}$.
To simplify the exposition we shall assume our curves are  free from separating nodes
(i.e. their dual graph is bridgeless). As we have seen in Proposition~\ref{ctprop}, 
separating nodes don't   play a  significant role
but  some technicalities   are needed  to treat them, and the statement of     Theorem ~\ref{torellisc} needs to be modified;
see \cite[Thm. 2.1.4 ]{CV2}.  
 
Recall that   for a stable curve $X$  we set
$$
 { \overline{\tau}}([X])={\bf\overline{Jac}}(X) =\bigl[\Jac(X)\curvearrowright (\PX,\overline{\Theta(X)})\bigr]. 
$$ 
We shall  look more closely at the stratification of $\PX$.  By combining   algebro-geometric arguments with    the combinatorial tools we developed for the tropical Torelli problem  one can prove the following  facts.

\begin{enumerate}[(A)]

\item
\label{tA}
With respect to the stratification \eqref{PX}  and the support map \eqref{Supp},
  the codimension-one strata of  $\PX$ are in bijection with the C1-sets of $G_X$; cf. Definition~\ref{C1def}.
  (The terminology ``C1-set" stands for ``Codimension 1 set", and is motivated precisely by the present situation).

Hence for every C1-set, $S$, there is a unique stratum corresponding to it (in other words $|\Sigma_S|=1$); we shall denote by $P_S$ this stratum.

\item
\label{tB}
Let $P_S$ be the  the stratum corresponding to a C1-set, $S$. The 
 intersection  $\ov{\Theta(X)}\cap P_S$   determines the set of branch  points over $S$, i.e.  the set $\nu^{-1}(S)\subset X^{\nu}$ (abusing  notation as usual  viewing $S\subset \Sing(X)$).
\item
\label{tC}
For every C1-set $S$, 
the number of  irreducible components  of $\ov{\Theta(X)}\cap P_S$ equals  the cardinality of $S$.
 
 \item
 \label{tD}
 On the opposite side, in \eqref{PX} the maximum codimension of a stratum is equal to $b_1(G_X)$, and there is a unique such  ``smallest" stratum,
 $ P_{E}\subset   \PX$, where  $E=E(G_X)$,
  identified as follows  
$$
 P_{E}\cong \prod_{v\in V}\Pic ^{g_v-1}C_v^{\nu}.
$$
For any $v\in V$ denote by $\pi_v:  P_{E} \to \Pic ^{g_v-1}C_v^{\nu}$
the projection; then the intersection of  $\ov{\Theta(X)}$ with the smallest stratum  satisfies
$$
\pi_v(\ov{\Theta(X)}\cap  P_E) =\Theta(C_v^{\nu}).
$$
 
\end{enumerate}
 \
 
 Now, consider  two stable curves  $X_1$ and $X_2$,  such that ${\overline{\tau}}([X_1])={\overline{\tau}}([X_2])$;
how are they related? Our goal is to answer this question and show   they need not be isomorphic.

By hypothesis we have an isomorphism 
between $\Jac(X_1)\curvearrowright \ov{P_{g-1}(X_1)}$ and  $\Jac(X_2)\curvearrowright \ov{P_{g-1}(X_2)}$, hence, by Remark~\ref{orbit},
we have   a bijection between the  stratifications which preserves the dimension of the strata.  Hence, with the notation of Subsection~\ref{CJsec}, we have an isomorphism of posets  $\ST_{X_1}\cong \ST_{X_2}$.
Now, this turns out to imply that there is an isomorphism of posets $\SP_{X_1}\cong \SP_{X_2}$.
Using the combinatorial  set-up described in the previous sections and \cite[Thm 5.3.2]{CV1}, this implies
\begin{equation}
 \label{G123}
G_1^{(3)}\equiv_{cyc}G_2^{(3)}.
\end{equation}
In particular, we  have a bijection between   the C1-sets of $G_1$ and  $G_2$.

On the other hand, we have the
 isomorphism between the Theta divisors.
Hence for every  C1-set, $S_1$, of $G_1$ the intersection   ${\ov {\Theta(X_1)}}\cap P_{S_1}$ 
 is mapped isomorphically to 
 ${\ov {\Theta(X_2)}}\cap P_{S_2}$, where $S_2$ is the C1-set of $G_2$ corresponding to $S_1$. Part \eqref{tC} implies that  $S_1$ and $S_2$ have the same cardinality.
Combining this with  \eqref{G123} and arguing as   in the final step of the proof of Theorem~\ref{torelligw}, we conclude
$$
G_1 \equiv_{cyc}G_2 .
$$

The above relation implies that $X_1$ and $X_2$ have the same number of irreducible components, and the same number of nodes.

Now,  the isomorphism  $\overline{P_{g-1}(X_1)}\cong \overline{P_{g-1}(X_2)}$
maps the smallest stratum isomorphically to the smallest stratum.
By what we said in \eqref{tD},   we can apply the   Torelli theorem for smooth curves
to obtain  an isomorphism  
$$
X_1^{\nu}\smallsetminus \{\text{rational components}\}\cong X_2^{\nu}\smallsetminus \{\text{rational components}\}.
$$
On the other hand, we   already proved that $X_1$ and $X_2$    have the same number of components, 
hence  $X_1^{\nu}\ \cong X_2^{\nu}$.
Now, by \eqref{tB}, this isomorphism maps the branch points of $X_1$ to those of $X_2$.

Summarizing, we showed that
$X_1$ and $X_2$ have
\begin{enumerate}
 \item
 the same normalization, 
let us  denote it by   $Y$ and let $Y\stackrel{\nu_i}{\la}X_i$ be the normalization maps for $i=1,2$;
 \item
 cyclically equivalent graphs, and hence   the ``same" C1-sets;
 \item
 the same set   of branch points $B=\nu_1^{-1}(\Sing(X_1))=\nu_2^{-1}(\Sing(X_2))$. 
\end{enumerate}
Let us focus on  $B$, a set of points in $Y$ which must be pairwise glued
to form the nodes of $X_1$ and $X_2$. How are the points of $B$ to be glued? As we said, $B$ is determined together with a partition induced by the partition in C1-sets of $E(G_1)$ and $E(G_2)$. Indeed, 
let us identify 
the C1-sets of $G_1$ with those of  $G_2$  and let $S$ be one of them.
Since $S$ corresponds to a set of singular points,  $B$ contains a subset, $B_S$,
of $2|S|$ branch points mapping to $S$. As $S$ varies the sets $B_S$ form a partition of $B$.

The problem now is:  we do not know how to pairwise glue the   branch points in $B_S$,
unless $S$ has cardinality one, of course.
Hence
the curves $X_1$ and $X_2$ 
may fail to be isomorphic  if they have C1-sets of cardinality greater than one.
This is   the case in the following example.

\begin{example}
 Let $C_1$ and $C_2$ be two non-isomorphic smooth curves of genus  $2$,
 pick two distinct points on each, $p_i,q_i\in C_i$ which are not mapped to one another by an automorphism of $C_i$.
Set $Y=C_1\sqcup C_2$ and $B=\{p_1,p_2, q_1,q_2\}$
and let  the dual graph be  as in Figure 9  ($G$ is unique in its cyclic equivalence class). 
\begin{figure}[h]
\begin{equation*}
\xymatrix@=.5pc{
 G=& *{\bullet}\ar@{-} @/^ .8pc/[rrr]^{e_1}^>{2}^<{2} \ar@{-} @/_,8pc/[rrr]_{e_2}   &&&*{\bullet}  
   \\}
\end{equation*}
\caption{ }
\end{figure}
$G$ has only one C1-set, namely $\{e_1, e_2\}$, hence the partition induced on $B$ is the trivial one (only one set);
therefore we can form exactly two non isomorphic curves corresponding to our data, namely
 $$
 X_1= {{C_1\sqcup C_2}\over{p_1=p_2, q_1=q_2}} \quad {\text{ and }} \quad X_2= {{C_1\sqcup C_2}\over{p_1=q_2,q_1= p_2}}
 $$
 For such curves we have, indeed, 
  ${\bf\overline{Jac}}(X_1)\cong {\bf\overline{Jac}}(X_2)$.
 \end{example}

Let us contrast the above  example by the following  variation
\begin{example}
 Let $C_1$,  $C_2$ and $Y$ be as in the previous example,  
 pick three  distinct points, $p_i,q_i, r_i\in C_i$.
Set   $B=\{p_1,p_2, q_1,q_2, r_1,r_2\}$
and fix the following dual graph  (unique in its cyclic equivalence class)
\begin{figure}[h]
\begin{equation*}
\xymatrix@=.5pc{
 G=& *{\bullet}\ar@{-}[rrr]^{e_2}\ar@{-} @/^ 1.2pc/[rrr]^{e_1}^>{2}^<{2} \ar@{-} @/_,8pc/[rrr]_{e_3}   &&&*{\bullet}  
   \\}
\end{equation*}
\caption{ }
\end{figure}

Now 
$G$   has  three  C1-sets namely $\{e_1\}, \{ e_2\}, \{ e_3\}$ 
to each of which there corresponds a pair of points in $B$ which must, therefore, be glued together.
For example, if the partition induced on $B$ is $\{p_1,p_2\}, \{q_1,q_2\}, \{ r_1,r_2\}$  then the only stable curve corresponding to these data is 
 
 $$
 X = {{C_1\sqcup C_2}\over{p_1=q_1, p_2=q_2, r_2=r_2}}. $$
 \end{example}

 We are ready for the Torelli theorem.  

\begin{thm}
\label{torellisc}
 Let $X_1$ and $X_2$ be stable curves of genus at least $2$ without   separating nodes.   Then
 ${\bf\overline{Jac}}(X_1)\cong {\bf\overline{Jac}}(X_2)$
 if and only if the following holds.
 
\begin{enumerate}
\item
 There is an isomorphism $\phi:X_1^{\nu}\to X_2^{\nu}$.
 \item
There is a bijection between the C1-sets of $X_1$ and those of $X_2$ such that 
for every C1-set, $S_1$, of $X_1$ and the corresponding C1-set, $S_2$, of $X_2$ we have
$\phi(\nu ^{-1}(S_1))=\nu ^{-1}(S_2)$.
\end{enumerate}
In particular, if the dual graphs of $X_1$ and $X_2$ are 3-edge connected, then 
${\bf\overline{Jac}}(X_1)\cong {\bf\overline{Jac}}(X_2)$
if and only if $X_1\cong X_2$.
\end{thm}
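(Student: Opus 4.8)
The plan is to prove the two implications separately. For the ``only if'' direction I would observe that it is precisely what the discussion preceding the statement accomplishes: starting from an isomorphism ${\bf\overline{Jac}}(X_1)\cong{\bf\overline{Jac}}(X_2)$, Remark~\ref{orbit} turns it into a dimension-preserving bijection between the stratifications \eqref{PX}, hence into isomorphisms of posets $\ST_{X_1}\cong\ST_{X_2}$ and $\SP_{G_{X_1}}\cong\SP_{G_{X_2}}$, so that \cite[Thm 5.3.2]{CV1} gives $G_1^{(3)}\equiv_{\text{cyc}}G_2^{(3)}$; feeding in \eqref{tC} to match the cardinalities of corresponding C1-sets and arguing as in the final step of the proof of Theorem~\ref{torelligw} (via \cite[Prop. 2.3.9]{CV1}) one upgrades this to $G_1\equiv_{\text{cyc}}G_2$; part \eqref{tD} together with the Torelli theorem for smooth curves, applied on the smallest stratum, produces an isomorphism $\phi\colon X_1^\nu\to X_2^\nu$ (both normalizations having equally many components); and finally \eqref{tB}, applied on each codimension-one stratum, shows that $\phi$ carries $\nu^{-1}(S_1)$ onto $\nu^{-1}(S_2)$ for corresponding C1-sets $S_1,S_2$. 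This is exactly (1) and (2).

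For the ``if'' direction, assume (1) and (2). The idea is to run the above reconstruction in reverse: the whole triple $\bigl(\Jac(X)\curvearrowright(\overline{P_{g-1}(X)},\ov{\Theta(X)})\bigr)$ should be recoverable from the cyclic equivalence class of $G_X$, the normalization $X^\nu$, and the partition of the branch locus $\nu^{-1}(\Sing X)$ into the subsets $\nu^{-1}(S)$ indexed by the C1-sets. First I would dispatch the purely combinatorial point that $X^\nu$ together with this partition already determines $G_X$ up to cyclic equivalence --- within each C1-set, every pairing of its branch points into nodes that is compatible with the C1-relation yields the same multigraph --- so that (1) and (2) force $G_1\equiv_{\text{cyc}}G_2$, hence (since $\SP$ depends only on the cycle space) an isomorphism $\SP_{G_{X_1}}\cong\SP_{G_{X_2}}$ and a bijection between the strata \eqref{PX}. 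On each matched pair of strata I would build an isomorphism from the canonical identifications $P^{\md}_{S_i}\cong\Pic^{\md}(X_i)^\nu_{S_i}$ and from the isomorphism $(X_1)^\nu_{S_1}\cong(X_2)^\nu_{S_2}$ induced by $\phi$ (which makes sense by (2)), and glue these, along the closure relations among strata, into an isomorphism $\overline{P_{g-1}(X_1)}\cong\overline{P_{g-1}(X_2)}$; its equivariance for the $\Jac(X_i)$-actions follows from $\Jac(X_i)\cong\Jac(X_i^\nu)\times(k^*)^{b_1(G_i)}$, both factors being matched by $\phi$ and by the C1-bijection respectively. The remaining --- and, I expect, main --- obstacle is to check that this isomorphism carries $\ov{\Theta(X_1)}$ onto $\ov{\Theta(X_2)}$: one has to use \eqref{tB}, \eqref{tC} and \eqref{tD} in the reverse direction, verifying that on each codimension-one stratum $\ov{\Theta(X)}$ is determined by the branch points lying over the corresponding C1-set, that on the smallest stratum it is cut out by the $\Theta(C_v^\nu)$, and that these descriptions are compatible; concretely this is the assertion --- illustrated by the examples preceding the theorem --- that re-gluing the branch points within a fixed C1-set leaves the isomorphism class of ${\bf\overline{Jac}}(X)$ unchanged.

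The last assertion is then immediate. If the dual graphs are $3$-edge connected, every C1-set consists of a single node, so condition (2) says that $\phi\colon X_1^\nu\to X_2^\nu$ sends the two-point fibre of $\nu$ over each node of $X_1$ onto the two-point fibre over the corresponding node of $X_2$. Since each node is obtained by gluing exactly those two points, $\phi$ is compatible with the identifications defining $X_1$ and $X_2$ from their common normalization, and hence descends to an isomorphism $X_1\cong X_2$; the converse implication is trivial.
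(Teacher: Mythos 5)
Your ``only if'' half is exactly the paper's own argument, reproduced step for step: Remark~\ref{orbit} and the stratification give the poset isomorphisms, \cite[Thm 5.3.2]{CV1} gives $G_1^{(3)}\equiv_{\text{cyc}}G_2^{(3)}$, fact \eqref{tC} plus the argument of Theorem~\ref{torelligw} upgrades this to $G_1\equiv_{\text{cyc}}G_2$, and \eqref{tB}, \eqref{tD} with the classical Torelli theorem produce $\phi$ and the matching of branch points; the 3-edge-connected consequence is also handled as in the paper (every C1-set is a singleton, so the gluing is forced). There is nothing to object to in that half.

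The genuine gap is in the ``if'' half, and you have flagged it yourself without closing it. Facts \eqref{tA}--\eqref{tD} only record how $\ov{\Theta(X)}$ meets the codimension-one strata and the minimal stratum; they are consequences of the structure of $(\PX,\ov{\Theta(X)})$, not a complete set of invariants, so ``using them in reverse'' does not show that the polarized object is determined by $X^{\nu}$ together with the C1-partition of the branch locus. The assertion that re-gluing branch points within a fixed C1-set leaves ${\bf\overline{Jac}}(X)$ unchanged is precisely the substantive content of this direction, and it cannot be extracted from the stratification data alone: a family of stratum-by-stratum identifications $P^{\md}_{S_1}\cong P^{\md}_{S_2}$ compatible with the closure order does not by itself glue to an isomorphism of varieties, let alone one matching the theta divisors and intertwining the $\Jac(X_i)$-actions; one needs the explicit modular description of $\PX$ (balanced line bundles on quasistable curves) to build an actual morphism. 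There is also a smaller unproved point: your preliminary claim that $X^{\nu}$ plus the partition of $\nu^{-1}(\Sing(X))$ into the sets $\nu^{-1}(S)$ determines $G_X$ up to cyclic equivalence requires an argument (connectedness and stability must be used to exclude stray regluings). Note that the paper itself does not prove this direction here: its ``proof'' consists of the preceding necessity discussion plus the observation settling the 3-edge-connected case, with the sufficiency deferred to \cite[Thm. 2.1.4]{CV2}, which is exactly where your main obstacle and this combinatorial point are actually resolved.
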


The last part of the theorem follows from what we observed right  before Remark~\ref{C1rk}.
We refer to \cite{CV2} for   more  general and detailed results on the fiber of the Torelli map $\ov{\tau}$.
\section{Conclusions}

As we discussed in the previous section, in the paper \cite{BMV} the authors prove the tropical Torelli theorem   and construct the moduli space for
tropical curves. Furthermore, they construct
  a moduli space for principally polarized tropical abelian varieties which we have yet to introduce.
In fact, as its title indicates, the principal goal of that paper was the definition of the tropical Torelli map, in analogy with the classical Torelli map $\tau:M_g\to A_g$. In order to do that, they had to   construct both the moduli space for tropical curves and the moduli space for tropical abelian varieties.

A  principally polarized tropical abelian variety of dimension $g$ is defined as a polarized torus 
$$
\Bigr({\RR^{g } \over \Lambda }; ( \; , \;)   \Bigl),
$$
where $\Lambda$ is a lattice of maximal rank in $\RR^g$ and  $( ,  )$ a bilinear form on $\RR^{g }$ defining a semi-definite quadratic form whose null space admits a basis in $\Lambda\otimes \QQ$. The Jacobian of a tropical curve defined in \eqref{defjactw} is a principally polarized abelian variety.

We here  denote by  $\Agt$ the moduli space of tropical abelian varieties constructed in  \cite{BMV}, where a different notation is used.   $\Agt$ is  a topological space (proved to be Hausdorff in \cite{Chan}) and   a stacky fan,  and   has  remarkable combinatorial properties.
The authors also construct  and study the   tropical Torelli map,
$$
\tau^{\trop}:\Mgt \la \Agt;\quad \quad \Gamma \mapsto   {\bf{Jac}}(\Gamma);
$$
see also in  \cite{Chan},  \cite {CMV} and \cite{viviani}. 

Our   goal now is to summarize, through the   commutative diagram below,  the connections between  algebraic and tropical moduli spaces   surveyed in this paper.   

   $$\xymatrix@=.5pc{
  &&&& &&\Mgban \ar [rrrrrrr]^{\trop}  \ar[ddddr] \ar [ddddddddd]_{{\overline{\tau}}^\an}&&&&&&&\;\Mgtb&&\\
  &&&&&&&&&   && \\
  &&&&&&&&&&\ocM_g(K)\ar[rrruu]^{\trop_K}\ar[llldd]_{\red_K}& \\
  &&&&&&&&&   && \\
  && &&  &&&\Mgb\ar [ddd]^{\overline{\tau}} &&&\cM_g(K)\ar@{^{(}->}[uu] \ar[rrr]^{\trop_K}\ar[lll]_{\red_K}\ar [ddd]^{{\tau}_K} &&&\;\Mgt\ar@{^{(}->}[uuuu] \ar [ddd]^{\tau^\trop}   &&&&  \\
 && && &&&  && &&&& &&\\
 && &&  &&&  && &&&& &&\\
  &&&&&&&\overline{A_g} &&&\cA_g(K)\ar[rrr]^{\trop_K^{A_g}}\ar[lll]_{\red_K^{A_g}}&&&\;\Agt\ar@{.>}[dd] &&  
  &&&&&  && &&&& &&\\
  && && &&&  && &&&& &&\\
  && && &&\overline{A_g}^\an\ar [ruu]\ar@{.>} [rrrrrrr]&&&&&&&{\boxed{?}}&&\\
}$$

\

We have already encountered most of the maps appearing in the diagram.
Only the left vertical arrow,
${\overline{\tau}}^\an: \Mgban\la \overline{A_g}^\an$,
 is really  new.  The space ${\overline{A_g}}^\an$  is the Berkovich analytification of the  main irreducible component of  compactified moduli space of principally polarized abelian varieties, introduced in Subsection~\ref{CJsec}, and 
${\overline{\tau}}^\an$
 is the ``analytic" Torelli map, defined as the map of Berkovich analytic spaces associated to the Torelli map 
${\overline{\tau}}$; see \cite[Sect. 3.3.]{berkovich}.

Next, recall that $\cM_g(K)$ and $\cA_g(K)$ are the sets of, respectively, smooth curves of genus $g$ over $K$ and   principally polarized abelian varieties of dimension $g$ over $K$; then   ${{\tau}_K}$
is the corresponding Torelli map.

The map $\red_K$ is defined in Remark~\ref{Cs},  the map $\red_K^{A_g}$  is defined in Remark~\ref{redAK}, 
and the map $\trop_K$ is defined in \eqref{tropl}.

The square diagram in the bottom right  is a straightforward generalization of \cite[Thm. 4.1.7]{viviani}.

As the diagram illustrates, we don't known how to fill in its right-bottom corner. 
This would amount to constructing a compactification of the moduli space of tropical abelian varieties 
which could be identified with the tropicalization of $\overline{A_g}$. In this way  we could try and complete the picture by gluing together
the local reduction and tropicalization maps, as done for stable curves via the moduli space of extended tropical curves.

The same question could be asked by focusing on Jacobians  and by replacing, in the diagram, the moduli spaces of abelian varieties by the Schottky loci.  In this   special case also the problem is awaiting to be solved. 

\bibliographystyle{amsalpha}
\bibliography{AMS2015final}

\end{document}